\documentclass[12pt,reqno]{amsart}

\usepackage{amsmath,amssymb,amsthm,mathtools,enumitem}
\usepackage[margin=1in]{geometry}
\usepackage{makecell}

\newcommand{\Des}{\mathrm{Des}}
\newcommand{\SGE}{S_{\ge 2}}

\usepackage{diagbox}
\usepackage{makecell}

\usepackage{amssymb,latexsym,cite,longtable,url,array,booktabs}
\usepackage[linguistics]{forest}
\usepackage{multicol}
\usepackage{arydshln,pstricks,pst-node,ytableau}
\usepackage{multicol,float,vcell}
\usepackage{lipsum}
\usepackage{comment}
\usepackage{tabularx,booktabs}
\usepackage{tikz}
\usetikzlibrary{positioning}
\usepackage{hyperref}

\newcolumntype{Z}[0]{>{\centering\arraybackslash}X}
\newcolumntype{n}[0]{>{\hsize=.8\hsize}Z}
\newcolumntype{s}[0]{>{\hsize=.5\hsize}Z}

\advance\textheight by \topskip  \makeatletter
\pgfkeys{/ytableau/options,
 onlyboxsize/.value required,
 onlyboxsize/.code = {%
  \compare@YT{#1}{normal}%
  \ifeq@YT
   \xdef\macro@boxdim@YT{\expandonce@YT\boxdim@normal@YT}%
  \else
   \xdef\macro@boxdim@YT{#1}%
  \fi
 }
}
\makeatletter

\newenvironment{smallytableau}{%
  \ytableausetup{smalltableaux,onlyboxsize=1em}%
  \begin{ytableau}%
}{%
  \end{ytableau}%
  \ytableausetup{nosmalltableaux,boxsize=normal}%
}

\usepackage{tikz}
\usepackage{xparse}

\makeatletter
\newcommand*{\rom}[1]{\expandafter\@slowromancap\romannumeral #1@}
\makeatother

\newcount\tableauRow
\newcount\tableauCol

\newenvironment{Tableau}[1]{%
  \tikzpicture[scale=0.7,draw/.append style={thick,black},
                      baseline=(current bounding box.center)]
    \tableauRow=-1.5
    \foreach \Row in {#1} {
       \tableauCol=0.6
       \foreach\k in \Row {
         \draw[thin](\the\tableauCol,\the\tableauRow)rectangle++(1,1);
         \draw[thin](\the\tableauCol,\the\tableauRow)+(0.5,0.5)node{$\k$};
         \global\advance\tableauCol by 1
       }
       \global\advance\tableauRow by -1
    }
}{\endtikzpicture}

\makeatletter
\renewcommand{\paragraph}{\@startsection{paragraph}{4}{0pt}%
   {-3.25ex plus -1ex minus -0.2ex}%
   {1.5ex plus 0.2ex}%
   {\normalfont\normalsize\bfseries}}
\makeatother

\stepcounter{secnumdepth}
\stepcounter{tocdepth}

\usepackage{microtype}
\usepackage{nicematrix}
\NiceMatrixOptions
{
 custom-line =
 {
letter = ; ,
command = \hdashedline ,
ccommand = \cdashedline ,
tikz = { dash pattern = on 4 pt off 2pt } ,
total-width = \pgflinewidth
 }
}

\newcommand{\BBB}{{\mathcal{B}}}

\newtheorem{theorem}{Theorem}[section]
\newtheorem{definition}[theorem]{Definition}
\newtheorem{thm}[theorem]{Theorem}
\newtheorem{prop}[theorem]{Proposition}
\newtheorem{example}[theorem]{Example}
\newtheorem{cor}[theorem]{Corollary}
\newtheorem{claim}[theorem]{Claim}
\newtheorem{lemma}[theorem]{Lemma}
\newtheorem{remark}[theorem]{Remark}

\newtheorem{question}[theorem]{Question}

\newcommand{\Q}{{\mathcal Q}}
\newcommand{\F}{\mathcal{F}}

\title{On the Schur-positivity of various sets of set partitions}
\author{Eli Bagno and David Garber}

\address{Eli Bagno, Jerusalem College of Technology, 21 HaVaad HaLeumi St., Jerusalem, Israel and Michlalah College Jerusalem, 36 Barukh Duvdevani St., Jerusalem, Israel, \texttt{bagnoe@g.jct.ac.il}}
\address{David Garber, Holon Institute of Technology, 52 Golomb St., P.O.Box 305, 5810201 Holon, Israel,  \texttt{garber@hit.ac.il}}

\begin{document}

\maketitle

\begin{abstract}
A symmetric function is called {\it Schur-positive} if it admits an expansion in the Schur basis with nonnegative coefficients. In this paper, we study the Schur positivity of symmetric functions naturally associated with set partitions, with respect to two different notions of descent.

In the first case, the Schur expansion involves hook-shaped Young diagrams, and the corresponding coefficients are given by Touchard-Riordan polynomials, which enumerate matchings by their number of crossings. In the second case, the Schur functions correspond to two-rows Young diagrams, and the coefficients are partial sums of associated Bell numbers.

A key ingredient of our approach in the second case is the notion of a removable singleton, defined algebraically and shown to admit an equivalent combinatorial interpretation via jeu-de-taquin rectification of skew tableaux.

As an application, we establish Schur positivity for various classes of symmetric functions indexed by non-crossing partitions and partitions with a given number of parts. We provide an explicit combinatorial description of the tableaux that contribute to the Schur expansion, and we connects the obtained coefficients to some known integer sequences.
\end{abstract}

\section{Introduction}

The study of Schur-positivity for combinatorial families is a central topic in algebraic combinatorics, with a particularly well-developed theory in the context of permutations. Foundational work of Gessel \cite{Gessel1984} introduced quasi-symmetric functions as generating functions for descent statistics, while Stanley \cite{EC2} established their deep connections with symmetric functions and representation theory. In this framework, many permutation classes give rise to symmetric and Schur-positive generating functions, a phenomenon that is now well understood through tools such as dual equivalence, introduced by Assaf \cite{Assaf}, which provides a general combinatorial mechanism for proving Schur-positivity.

A significant work on Schur-positivity for permutation sets has been developed by Athanasiadis, Adin and Roichman \cite{AAR}, who introduced systematic methods for constructing and analyzing Schur-positive families via descent statistics and Coxeter-theoretic structures.
Elizalde and Roichman \cite{ER} introduced general constructions for studying Schur-positivity of permutations via grid classes and product operations. Subsequent developments can be found e.g. in Bloom et al. \cite{BER}.

\medskip

In contrast, the extension of these ideas to set partitions is far less developed.
For the special case of matchings, which can be viewed as set partitions with block size at most two,  several independent approaches have established Schur-positivity for natural families. Marmor \cite{Marmor} introduced a Knuth-type equivalence on matchings and proved Schur-positivity for families defined by crossing statistics and related constraints.  Adin and Roichman \cite{AR} and Yan et al.\cite{YYZ} studied descent statistics on involutions and matchings, showing that the associated quasi-symmetric generating functions are symmetric and Schur-positive. These results indicate that key structural features from the permutation setting - such as descent statistics and Knuth-like relations - extend, at least partially, to matchings.

By encoding set partitions using restricted-growth functions (see e.g. the book of E\u{g}ecio\u{g}lu and  Garsia \cite[Sec. 1.7]{EgGa}) and defining descent-like statistics (see, e.g., Wachs-White \cite{WW}), one can define quasi-symmetric generating functions for set partitions  \cite{BGMS}.
However, there is as yet no analogue of the Robinson-Schensted correspondence or dual equivalence theory that would systematically explain Schur-positivity. Consequently, existing results are largely confined to specific subclasses or constructions, and the development of a unified theory of Schur-positivity for set partitions remains an open direction of research.

\medskip

In this paper, we deal with two descent functions over set partitions, which are motivated by results of Adin and Roichman \cite{AR}. They dealt with the descent-type parameter on matchings which they called {\it geometric descent}: an index $i \in [n-1]:=\{1,2,\dots,n-1\}$ is a {\it geometric descent} of a matching $m$ on $[n]$ if
one of the following conditions holds:
\begin{enumerate}
\item  $\{i, i + 1\}$ is a matched pair in $m$,
\item The arc containing $i$ intersects the arc containing $i + 1$,
\item $i$ is unmatched and $i + 1$ is matched.
\end{enumerate}

Hence, we define two different types of descent parameters on set partitions. The first one, denoted ${\rm Short}(\pi)$, contains the elements $i$, such that $i$ and $i+1$ share the same block in a set partition $\pi$, while the second, denoted ${\rm Desing}(\pi)$, contains the elements $i$, such that $i$ is a singleton in $\pi$ but $i+1$ is not.

\medskip

We prove Schur-positivity of some subsets of the set $\mathcal{PS}et(n)$ of set partitions. Our main results are of the following general form, where $\mathcal{A}$ is a specific subset of $\mathcal{PS}et(n)$, ${\rm \mathcal{D}es}(\pi)$ is a specific choice of a descent set function, which is either ${\rm Short}$ or ${\rm Desing}$, $\mathcal{F}_{n,{\rm \mathcal{D}es}(\pi)}$ is the fundamental quasi-symmetric function with respect to that choice of ${\rm \mathcal{D}es}$, $\left\{\mathcal{C}_{n,k}\right\}$ is a sequence of coefficients and $\{\lambda_k\}$ is a sequence of shapes determining the Schur functions $s_{\lambda_k}$:
$$\sum_{\pi \in \mathcal{A}}\mathcal{F}_{n,{\rm \mathcal{D}es}(\pi)}=\sum\limits_{k}\mathcal{C}_{n,k} s_{\lambda_k}.$$

We list our main results in Table \ref{table main results}: the first column indicates the corresponding set, the second column indicates the chosen descent function, the third column mentions the corresponding coefficients and the fourth one is the shape of the Young tableaux, where the last column indicates the whereabouts of the corresponding result in this paper. The different coefficients appearing in the table are as follows:
\begin{itemize}
\item $T_{n,j}(q)$ are the Touchard-Riordan polynomials (see Section \ref{ss crossing number}),
\item $M_n$ are the Motzkin numbers (see Section \ref{subsec:motzkin numbers}),
\item $M_{n,k}$ are the elements of the Motzkin triangle (see Section \ref{subsec:motzkin numbers}),
\item  ${\rm Bell}_{\geq 2}(n)$ are the associated Bell numbers (see Definition \ref{def associated Bell numbers}),
\item $r_i$ are the Riordan numbers (see Definition \ref{definition Riordan numbers}),
\item $S_{\geq 2}(n,k)$ are the associated Stirling numbers of the second kind (see Definition \ref{definition associated Stirling number}).
\end{itemize}

\begin{table}[H]
\vspace{-140pt}
\begin{footnotesize}
\begin{tabular}{||c|c|c|c|c||}
&&&&\\
&&&&\\
&&&&\\
&&&&\\
&&&&\\
&&&&\\
&&&&\\
&&&&\\
&&&&\\
&&&&\\
&&&&\\
\hline\hline
$\mathcal{A}$ & ${\rm \mathcal{D}es}(\pi)$ & $\mathcal{C}_{n,k}$  & $\lambda_k$ & Thm. \\
\hline\hline
 $\mathcal{PS}et(n,\ell)$  & {\rm  Short}   &  $T_{n-1-k,n-\ell-k}(1)$ & $(n-k,1^k)$ &
   \ref{specializations}(1)  \\
\hline
$\mathcal{PS}et(n)$  & {\rm  Short}  &  $\sum\limits_{\ell=1}^{n-k}T_{n-1-k,n-\ell-k}(1)$ & $(n-k,1^k)$ &
   \ref{specializations}(2)  \\
\hline
${\mathcal NC}(n,\ell)$  & {\rm  Short} &  $M_{n-1-k,n-\ell-k}$ & $(n-k,1^k)$ &
   \ref{specializations}(3)  \\
\hline
${\mathcal NC}(n)$  & {\rm  Short}  &  $M_{n-k-1}$ & $(n-k,1^k)$ &
   \ref{specializations}(4)  \\
\hline
$\mathcal{PS}et(n)$ & {\rm  Desing}  & $\left\{ \begin{array}{ll}
\sum\limits_{i=2}^{n-1} {\rm Bell}_{i\geq 2}(i) & k=1\\
\sum\limits_{i=k}^{n-k} {\rm Bell}_{i\geq 2}(i) & (k=0) \mbox{ or } (k>1). \\
\end{array}\right.$ &  $(n-k,k)$ &    \ref{main theorem}  \\
 \hline
${\mathcal NC}(n)$ & {\rm  Desing}  & $ \left\{ \begin{array}{ll}
\sum\limits_{i=2}^{n-1} r_i & k=1\\
\sum\limits_{i=k}^{n-k} r_i & (k=0) \mbox{ or } (k>1) \\
\end{array}\right.$  & $(n-k,k)$ & \ref{main theorem noncrossing}\\
\hline
$\mathcal{PS}et(n,b)$ & {\rm  Desing} & $\left\{ \begin{array}{ll}
\sum\limits_{t=1}^{n-2} S_{\geq 2}(n-t,b-t) & \ \ k=1\\
\sum\limits_{t=k}^{n-k} S_{\geq 2}(n-t,b-t) & \ \ (k=0) \mbox{ or } (k>1) \\
\end{array}\right.$  & $(n-k,k)$ & \ref{main theorem blocks}\\
\hline\hline
\end{tabular}
\end{footnotesize}
\caption{List of our main results}\label{table main results}
\end{table}

Our results, dealing with descent set function based on singletons, provide an application of the general theory of Marmor \cite{Marmor}, which states that for sets $\mathcal{A}$ with {\it sparse} descent sets $\rm \mathcal{D}es$ (where a set $J \subset [n-1]$ is called   {\em sparse} if $\{j, j + 1\} \not\subseteq J$ for every
$1 \leq j \leq n-2$; termed also {\em lacunar} by Grinberg \cite{Grinberg}), the quasi-symmetric function $\mathcal{Q}_{{\rm \mathcal{D}es},n}$ is positively-spanned by Schur functions corresponding to two-rows shapes. Explicitly, Marmor \cite{Marmor} proved (adapted to our notations):
\begin{thm}
Let $\mathcal{A}$ be a finite set with a set function $\mathcal{D}{\rm es}: \mathcal{A} \to 2^{[n-1]}$. Then the following statements are equivalent:
\begin{itemize}
\item $\mathcal{D}{\rm es}$ is sparse, and for every sparse $J\subseteq [n-1]$, the cardinality of the set
$$\{\pi \in \mathcal{A} \mid \mathcal{D}{\rm es}(\pi) \supseteq J\}$$ depends only on the size of $J$.

\medskip

\item ${\mathcal A}$ is symmetric with respect to $\mathcal{D}{\rm es}$, with a Schur expansion of the form
$\mathcal{Q}_{n,\mathcal{D}{\rm es}}(\mathcal{A}) = \sum\limits_{k=0}^{\left\lfloor\frac{n}{2}\right\rfloor} c_k s_{(n-k,k)}$ for some $c_k \in \mathbb{Z}$.
\end{itemize}

\medskip

Furthermore, if these statements hold, then $\mathcal{A}$ is Schur-positive, and its Schur expansion is $$\mathcal{Q}_{n,\mathcal{D}{\rm es}}(\mathcal{A})=\sum\limits_{k=0}^{\left\lfloor \frac{n}{2} \right\rfloor} \underbrace{\left|\{\pi \in  \mathcal{\mathcal{A}} \mid \mathcal{D}{\rm es}(\pi) = \{1, 3, 5, \dots, 2k-1\}\right|}_{=c_k} \cdot s_{(n-k,k)}.$$
\end{thm}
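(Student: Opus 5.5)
The plan is to work throughout in Gessel's fundamental basis, where $\mathcal{Q}_{n,\mathcal{D}{\rm es}}(\mathcal{A})=\sum_{\pi\in\mathcal{A}}\mathcal{F}_{n,\mathcal{D}{\rm es}(\pi)}$, together with the standard expansion $s_\lambda=\sum_{T\in SYT(\lambda)}\mathcal{F}_{n,\Des(T)}$.

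\emph{Preliminary facts on two-row tableaux.} For a two-row standard tableau $T$, an index $i$ is a descent exactly when $i$ lies in row $1$ and $i+1$ lies in row $2$. Hence $\Des(T)$ is always sparse.

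For sparse $J$ with $|J|=j$, put
$$g_k(J)=|\{T\in SYT(n-k,k)\mid \Des(T)\supseteq J\}|.$$
I will show that $g_k(J)$ depends only on $j$, and write $g_k(j)$ for it. The idea is to delete every forced pair $(i,i+1)$ with $i\in J$, where $i$ sits in row $1$ and $i+1$ in row $2$, and then standardize the remaining entries. I expect this to give a bijection onto $SYT(n-k-j,k-j)$, so that $g_k(j)=f^{(n-k-j,k-j)}$. In particular $g_k(j)=0$ for $j>k$, and $g_k(k)=1$.

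Checking that this deletion really is a bijection is the step I expect to be the main obstacle. It requires showing that removing a descent pair preserves column-strictness, and that any such pair can be reinserted uniquely. The argument is a ballot-sequence argument: reading $T$ as a lattice word, the deleted pair contributes one "up" followed immediately by one "down".

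\emph{(2)$\Rightarrow$(1).} Suppose $\mathcal{Q}_{n,\mathcal{D}{\rm es}}(\mathcal{A})=\sum_k c_k s_{(n-k,k)}$. Comparing coefficients in the $\mathcal{F}$-basis gives
$$|\{\pi\mid\mathcal{D}{\rm es}(\pi)=D\}|=\sum_k c_k\,|\{T\in SYT(n-k,k)\mid \Des(T)=D\}|.$$
For non-sparse $D$ the right-hand side vanishes, so every $\mathcal{D}{\rm es}(\pi)$ is sparse. Summing over all sparse $D\supseteq J$ then gives
$$|\{\pi\mid\mathcal{D}{\rm es}(\pi)\supseteq J\}|=\sum_k c_k\,g_k(|J|),$$
which depends only on $|J|$.

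\emph{(1)$\Rightarrow$(2).} Let $b_j$ be the common value of $|\{\pi\mid\mathcal{D}{\rm es}(\pi)\supseteq J\}|$ over sparse $J$ with $|J|=j$. The system
$$\sum_k c_k\, g_k(j)=b_j,\qquad 0\le j\le\lfloor n/2\rfloor,$$
is unitriangular, since $g_k(j)=0$ for $j>k$ and $g_j(j)=1$. It therefore has a unique integer solution $(c_k)$. Set $P=\sum_k c_k s_{(n-k,k)}$.

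Both $P$ and $\mathcal{Q}_{n,\mathcal{D}{\rm es}}(\mathcal{A})$ have $\mathcal{F}$-coefficients supported on sparse sets. For each of them, the superset sums of these coefficients over sparse $D\supseteq J$ equal $b_{|J|}$. The family of sparse sets is closed under taking subsets, so Möbius inversion on this poset recovers the individual coefficients from the superset sums. Hence the coefficients agree and $\mathcal{Q}_{n,\mathcal{D}{\rm es}}(\mathcal{A})=P$, which is symmetric with the required two-row form.

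\emph{Explicit coefficients.} Let $J_k=\{1,3,\dots,2k-1\}$. I will show that the only two-row standard tableau with descent set exactly $J_k$ has shape $(n-k,k)$, namely the one with $1,3,\dots,2k-1,2k+1,\dots,n$ in row $1$ and $2,4,\dots,2k$ in row $2$.

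For any row-$2$ entry $x$, the entry $x-1$ is either in row $1$, so that $x-1$ is a descent, or in row $2$. Following this chain backwards, every maximal run of row-$2$ entries must begin immediately after a descent. Since the descents are exactly $J_k$, the row-$2$ entries are precisely $2,4,\dots,2k$, and every entry after $2k$ lies in row $1$.

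Substituting $D=J_k$ into the $\mathcal{F}$-coefficient comparison, only the shape $(n-k,k)$ contributes, and it contributes this single tableau. This gives $c_k=|\{\pi\mid\mathcal{D}{\rm es}(\pi)=J_k\}|\ge 0$, which is both the stated formula and Schur-positivity.
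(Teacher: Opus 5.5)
The paper does not prove this theorem. It quotes it as Marmor's Theorem 1.8 and uses it only as background, so there is no in-paper argument to compare with, and your proposal has to stand on its own. It essentially does.

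Your ballot-word sketch for $g_k(j)=f^{(n-k-j,k-j)}$ is already complete:
\begin{itemize}
\item since $J$ is sparse, the pairs $\{i,i+1\}$ with $i\in J$ are disjoint;
\item deleting an adjacent factor (a row-$1$ letter followed by a row-$2$ letter) lowers both letter counts of every later prefix by one, so the ballot condition survives;
\item reinserting such factors at the positions dictated by $J$ is the inverse.
\end{itemize}
The remaining steps of both implications are also correct: the $\mathcal{F}$-coefficient comparison, the unitriangular system, and the M\"obius inversion. The inversion works on any finite poset, here the sparse sets under inclusion, so closure under subsets is not actually needed.

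The one step that does not follow from what you wrote is the uniqueness of the two-row tableau with $\mathrm{Des}(T)=J_k=\{1,3,\dots,2k-1\}$. Your run argument shows that the maximal runs of row-$2$ entries start at $2,4,\dots,2k$. The runs starting at $2i$ with $i<k$ stop immediately, because $2i+1\in J_k$ lies in row~$1$. Nothing in the descent condition stops the run starting at $2k$, however. A priori row~$2$ could contain $2k,2k+1,\dots,2k+r$ with $r\ge 1$, and this creates no new descent.

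You must invoke column-strictness (equivalently, the ballot condition) at this point. Suppose $2k+1$ were in row~$2$. The row-$2$ entries smaller than it are exactly $2,4,\dots,2k$, so it would occupy position $(2,k+1)$. Position $(1,k+1)$ exists because row~$1$ is at least as long as row~$2$. The row-$1$ entries smaller than $2k+1$ are only $1,3,\dots,2k-1$, so the entry at $(1,k+1)$ would exceed $2k+1$, which is a contradiction.

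With this line added, row~$2$ is exactly $\{2,4,\dots,2k\}$ and the shape is forced to be $(n-k,k)$. Your comparison of $\mathcal{F}$-coefficients at $J_k$ then gives $c_k=|\{\pi\in\mathcal{A}\mid \mathcal{D}{\rm es}(\pi)=J_k\}|\ge 0$, as you describe.
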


Our results improve Marmor's theorem in the cases we dealt with, as we provide the appropriate mappings from the sets $\mathcal{A}$ to the set of standard Young tableaux of the corresponding shapes and also supply combinatorial meanings to the coefficients taking part in the decompositions.

\medskip
This paper is organized as follows.
In Section \ref{Background}, we
give the requi background on Schur-positivity and the crossing number of a set partition.
In Section \ref{section on schur positivity for des sharing block},
we show that $\mathcal{PS}et(n)$, with the parameter  ${\rm Short}$ functioning as the descent function, is Schur-positive. Moreover, we derive similar results for some of its subsets (i.e. set partitions with a given number of blocks, and the non-crossing set partitions).
In Section \ref{subsection schur positivity with respet to desing}, we show that $\mathcal{PS}et(n)$, with the parameter  ${\rm Desing}$ functioning as the descent function, is Schur-positive.
In Section \ref{section: desing for noncrossing partitions},
we prove the same for non-crossing partitions of $[n]$. In Section \ref{section:refinement}, we refine our theory of Schur positivity with respect to the Desing parameter for partitions with a given number of blocks. In this section, we obtained some interesting connections to various integer sequences, as the numbers of total partitions, the Eulerian numbers and the second-order Eulerian numbers.
In Section \ref{section: distribution Desing parameter}, we change our course and provide an enumerative account of the parameter Desing.
Section \ref{section: open questions} concludes the paper with some open questions.

\section{Background}\label{Background}

In this section, we supply the required background  for  Schur-positivity in Section \ref{sec:prel_quasi}, the crossing number of a set partition and the Touchard-Riordan polynomials in Section \ref{ss crossing number} and introduce the Motzkin numbers and Motzkin triangle in Section
\ref{subsec:motzkin numbers}.

\subsection{Symmetric functions, quasi-symmetric functions and Schur positivity}\label{sec:prel_quasi}

In this subsection, we follow \cite{ABR}.
Let ${\bf x}=\{x_1,x_2,\dots\}$ be a countably infinite set of commuting variables and consider the algebra of formal power series over $\mathbb{Q}$.  

A power series $f\in \mathbb{Q}[[\bf{x}]]$ is called {\em symmetric} if it has a bounded degree and it is invariant under permutation of variables.
Denote by ${\rm Sym}_n$ the vector space of symmetric functions, homogeneous of degree $n$.  Bases for ${\rm Sym}_n$  are indexed by {\em partitions $\lambda$ of $n$} (denoted $\lambda \vdash n$), or equivalently, by Young diagrams with $n$ boxes.

The celebrated Schur basis of ${\rm Sym}_n$ is essential for our needs.
Given a partition $\lambda$ of $n$, a {\em standard Young tableau (SYT) of shape $\lambda$} is obtained by filling the boxes of the corresponding Young diagram bijectively with the elements $\{1,\dots,n\}$ so that rows and columns increase.  In a {\em semistandard Young tableau (SSYT) of shape $\lambda$}, we demand that the entries are positive integers such that rows weakly increase and columns strictly increase.
We write ${\rm SYT}(\lambda)$ or ${\rm SSYT}(\lambda)$ for the set of standard or semistandard Young tableaux of shape $\lambda$, respectively.

Next, we define the notion of a {\it descent set} of a standard Young tableau $T$:
\begin{definition}\label{Des of T}
The  {\em descent set of a standard Young tableau $T$} is defined to be:
$$
\Des( T) =\{i \mid \text {$i+1$ is in a lower row in $T$ than $i$}\}.
$$
\end{definition}
For example, if
$T=\raisebox{4mm}{\begin{smallytableau} 1&3&4 \\ 2&5&6 \\ 7 \end{smallytableau}}\ ,$
then $\Des (T)=\{1,4,6\}$.

\medskip
In this work, a special attention will be given  to standard Young tableaux having two rows, i.e., of shape $(n-k,k)$. Note that the first row of such a tableau comprises a sequence of runs of consecutive increasing numbers, where each run, apart from the last one, ends with a descent.

The {\it Schur function corresponding to a partition $\lambda \vdash n$} is: $s_{\lambda}=\sum\limits_{T \in {\rm SSYT}(\lambda)}\prod\limits_{i \in T}x_i.$
For example, the set ${\rm SSYT}(3,1)$ contains inter alia the following semistandard fillings of the shape
$\lambda=
{\begin{smallytableau} {}&{}&{} \\ {} \end{smallytableau}}
$\ :
$$
T_1 = {\begin{ytableau} {1}&{1}&{1} \\ {2} \end{ytableau}}\ , \ \
T_2= {\begin{ytableau} {1}&{2}&{2} \\ {2} \end{ytableau}}
\ , \ \  T_3={\begin{ytableau} {1}&{2}&{3} \\ {4} \end{ytableau}},
$$
which contribute to $s_{(3,1)}$ the monomials $m_1=x_1^3x_2,\ m_2=x_1x_2^3$ and $m_3=x_1x_2x_3x_4$ respectively.
It is well-known \cite[Chap. 7]{EC2} that the set $\{s_{\lambda}\}_{\lambda \vdash n}$ is a basis of ${\rm Sym}_n$.

A symmetric function is called {\em Schur-positive} if all the
coefficients in its expansion in the basis of Schur functions are nonnegative.
Determining whether a given symmetric function is
Schur-positive is a major problem in contemporary algebraic
combinatorics~\cite{Stanley_problems}.

\medskip

Quasi-symmetric functions extend symmetric functions. Here is the formal definition:
\begin{definition}\label{def quasi-symmetric}
A {\em quasi-symmetric function} is  a formal power series $g\in \mathbb{Q}[[{\mathbf x}]]$ of bounded degree  satisfying that any two of its monomials $x_{i_1}^{n_1}\dots x_{i_k}^{n_k}$ (where $i_1<\dots<i_k$) and $x_{j_1}^{n_1}\dots x_{j_k}^{n_k}$ (where $j_1<\dots<j_k$) have the same coefficient in $g$.
\end{definition}
Clearly, every symmetric function is quasi-symmetric, but not conversely; for example, $\sum\limits_{i<j}{x_i^2 x_j}$  is quasi-symmetric but not symmetric.

The fundamental quasi-symmetric functions, which are defined now, serve as a basis for the vector space of homogeneous quasi-symmetric functions of degree $n$:
\begin{definition}\label{def fund}
For each subset $D \subseteq [n-1]$ define the {\em
fundamental quasi-symmetric function}
\[
\mathcal{F}_{n,D}({\bf x}) := \sum_{i_1\le i_2 \le \ldots \le i_n \atop {i_j <
i_{j+1} \text{ if } j \in D}} x_{i_1} x_{i_2} \cdots x_{i_n}.
\]
\end{definition}

Let $\BBB$ be a (multi-)set of combinatorial objects, equipped with a {\em descent function}\break ${\rm \mathcal{D}es}: \BBB \to P([n-1])$, which associates to each
element $b\in \BBB$ a subset ${\rm \mathcal{D}es}(b) \subseteq [n-1]$. Define the
quasi-symmetric function
\[
\Q_n(\BBB) := \sum\limits_{b\in \BBB} m(b,\BBB) \F_{n,{\rm \mathcal{D}es}(b)},
\]
where $m(b,\BBB)$ is the multiplicity of the element $b \in \BBB$.

\medskip

Gessel showed that (see \cite[Theorem 7.19.7]{EC2}):
\begin{thm}[Gessel]\label{gessel}
For every partition $\lambda \vdash n$,
$\Q_n({{\rm SYT}(\lambda)})=s_{\lambda}$.
\end{thm}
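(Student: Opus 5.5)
The plan is to prove that $\Q_n(\mathrm{SYT}(\lambda))=\sum_{T\in{\rm SYT}(\lambda)}\F_{n,\Des(T)}$ equals $s_\lambda$ by sorting semistandard tableaux according to a standard tableau naturally attached to each of them. The tool is \emph{standardization}. Given $P\in{\rm SSYT}(\lambda)$, relabel its entries by $1,\dots,n$ as follows. Go through the values in increasing order. Within the cells holding a fixed value $v$, number them left to right. Because columns strictly increase, equal entries form a horizontal strip: no two lie in the same column, so these cells occupy distinct columns and the left-to-right order is unambiguous. The result is a standard tableau ${\rm std}(P)\in{\rm SYT}(\lambda)$. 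Rows increase since ties are broken left to right. Columns increase since entries strictly increase down columns.

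Next, fix $T\in{\rm SYT}(\lambda)$ and describe the fibre ${\rm std}^{-1}(T)$. I claim it is in bijection with weakly increasing sequences $i_1\le i_2\le\dots\le i_n$ satisfying $i_j<i_{j+1}$ whenever $j\in\Des(T)$. The bijection sends such a sequence to the filling that places $i_j$ in the cell of $T$ containing $j$.

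In one direction, if $P$ standardizes to $T$, put $i_j$ equal to the entry of $P$ in the cell where $T$ has $j$. Then the sequence is weakly increasing. Moreover, $i_j=i_{j+1}$ forces the cells of $j$ and $j+1$ to lie in one horizontal strip with $j+1$ strictly to the right. In a standard tableau, $j+1$ lying strictly right of $j$ means it is weakly higher, so $j\notin\Des(T)$.

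Conversely, take a sequence with strict increases at the descents of $T$. I need the resulting filling to be semistandard and to standardize back to $T$. Rows weakly increase automatically. For columns, take two vertically adjacent cells holding $a<b$ in $T$. If $i_a=i_b$, then $i_a=i_{a+1}=\dots=i_b$, so none of $a,\dots,b-1$ is a descent. Hence $a,a+1,\dots,b$ move weakly upward and strictly rightward. That contradicts $b$ being directly below $a$. So columns strictly increase. The same observation shows that each maximal run of equal values occupies cells going strictly left to right, so standardization returns $T$.

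Summing the monomials over each fibre gives $\sum_{P:\,{\rm std}(P)=T}x^P=\F_{n,\Des(T)}$, by Definition \ref{def fund}. Summing over $T$ then gives $s_\lambda$.

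The main obstacle is the column-strictness check in the converse direction. It requires the lemma that, in a standard tableau, a non-descent $j$ places $j+1$ strictly to the right of $j$ and weakly above it. I would prove this first from the fact that every initial segment $\{1,\dots,m\}$ of $T$ occupies a Young subdiagram. The cell of $j+1$ is then an outer corner of the shape of $\{1,\dots,j\}$, while the cell of $j$ is an inner corner of that same shape. Two such corners are either weakly above-and-strictly-right of each other, or the reverse, and only the first option is compatible with $j\notin\Des(T)$.
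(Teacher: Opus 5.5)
Your proof is correct and complete. The paper itself gives no proof of this statement. It quotes it as Gessel's theorem with a pointer to \cite[Theorem 7.19.7]{EC2}, where it is obtained as a special case of Stanley's theory of $(P,\omega)$-partitions. In that setting, the semistandard fillings of $\lambda$ are the $(P,\omega)$-partitions of the poset of cells of $\lambda$ under a suitable labeling, the linear extensions are the standard tableaux, and the fundamental lemma sorts each $(P,\omega)$-partition according to its unique compatible linear extension.

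Your standardization map is exactly that compatible linear extension, specialized to this poset. Your two fibre checks (equal values only at non-descents, and strict column increase recovered from strictness at descents) are the two halves of the lemma. So the skeleton is the same. What your version gains is a self-contained, elementary argument with an explicit bijection on each fibre. The $P$-partition route, in exchange, gives the analogous fundamental expansion for every labeled poset at once. Your argument also goes through verbatim for skew shapes $\lambda/\mu$, since the corner lemma only uses that $\{1,\dots,j\}$ fills $\nu/\mu$ for some Young diagram $\nu$.

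One small correction in the corner lemma: the two alternatives for the cell of $j+1$ relative to the cell of $j$ are not mirror images of each other. They are ``weakly above and strictly right'' or ``strictly below and weakly left.'' For example, in the standard tableau with rows $1\,3$ and $2\,4$, the entry $4$ sits directly below $3$. The correct dichotomy is what the corner computation gives. Write $\mu_r=c>\mu_{r+1}$ for the removable corner and $c'=\mu_{r'}+1$ for the addable cell, and split into the cases $r'\le r$ and $r'>r$. This correction does not affect your argument: both of your uses ($j\notin\Des(T)\Rightarrow$ strictly right and weakly above, and strictly right $\Rightarrow$ weakly above) follow from the correct dichotomy.
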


Hence, proving Schur-positivity of a set $E$ with respect to some descent function, amounts to defining a descent-preserving function $\varphi: E \to \bigcup\limits_{\lambda \vdash n}{\rm SYT}(\lambda)$, such that for each $\lambda \vdash n$ and $S,T \in {\rm SYT}(\lambda)$, one has $|\varphi^{-1}(S)|=|\varphi^{-1}(T)|$.

\medskip

For each $0\leq k\leq n$, recall the definition of the {\it hook diagram} $\lambda_k=(n-k,1^k)$ (where $1^k$ stands for $k$ parts of size $1$), and note that:
\begin{equation}
\{{\rm Des}(T) : T\in {\rm SYT}(\lambda_k)\}=\{D\subseteq [n-1]: |D|=k\}.
\end{equation}

\subsection{The crossing number of a set partition and the Touchard-Riordan polynomials}\label{ss crossing number}

Let $n$ be a positive integer and recall that $\mathcal{PS}et(n)$ is the set of all set partitions of $[n]$.
Let $\pi=\{ B_1, B_2 , \dots , B_k\} \in \mathcal{PS}et(n)$ be a set  partition of  $[n]$ into $k$ blocks $B_1,\dots,B_k$, and assume that the elements in each block $B_i$ are ordered increasingly.
The partition $\pi$ can be depicted graphically as a linear graph
made of the $n$ points
$\{1,\dots,n\}$, drawn on a (virtual) horizontal line, ordered increasingly by their labels. For every block $B_j=\{j_1<\dots<j_t\}$, a semicircular arc is drawn between every pair of points $j_k$ and $j_{k+1}$. In the case that $j_{k+1}=j_k+1$, we join $j_k$ and $j_{k+1}$ by a straight line instead of a semicircular arc.

\begin{example}\label{exam_partition_crossing}
Let $\pi=137\mid 24568$ be a set partition of the set $[8]$ (which is an abbreviation for the partition $\{\{1,3,7\},\{ 2,4,5,6,8\}\}$). Then its corresponding linear graph is drawn in Figure \ref{fig def crossing}.

\begin{figure}[H]
\begin{center}
\begin{tikzpicture}[scale=1]

  \foreach \x in {1,...,8} {
    \fill (\x,0) circle (2pt);
    \node[below] at (\x,0) {\x};
  }

\fill[gray] (2.32,0.67) circle (2pt);
\fill[gray] (3.5,0.77) circle (2pt);
\fill[gray] (6.59,0.71) circle (2pt);

  \def\arc#1#2#3{
    \draw (#1,0) arc[start angle=180, end angle=0, x radius={(#2-#1)/2}, y radius=#3];
  }

  \arc{1}{3}{0.7}
  \arc{3}{7}{1.2}
  \arc{2}{4}{0.9}
  \draw (4,0) -- (6,0);
  \arc{6}{8}{0.8}
\end{tikzpicture}
\caption{}\label{fig def crossing}
\end{center}
\end{figure}

\end{example}

The notion of the {\it crossing number} of a set partition admits several classical variants,
depending on whether one counts crossings or measures their maximal size, and on the precise definition of a crossing.
Using the linear graph of the set partition, a {\it crossing} is typically defined as follows (see e.g. \cite{PoYa}).
\begin{definition}
Let $\pi$ be a set partition of the set $[n]$, presented as a linear graph as above.
For $r<s$, two arcs $(i_r,j_r)$ and $(i_s,j_s)$ {\em cross} each other if $i_r<i_s<j_r<j_s$. We denote by ${\rm cr}(\pi)$ the number of such crossings and call it the {\em crossing number} of the set partition $\pi$.
\end{definition}
In Example \ref{exam_partition_crossing}, the crossings are depicted as gray points, so we have ${\rm cr}(\pi)=3$.

\begin{remark}
A different definition of the crossing number can be found in \cite{CDDSY} as the size of the largest $k$-crossing, i.e., a family of arcs $(i_1,j_1),\dots,(i_k,j_k)$
with $i_1<\cdots<i_k<j_1<\cdots<j_k$.
Additional variants, which arise when restricting crossings to arcs from distinct blocks or
when strengthening the notion to crossings of blocks rather than arcs, can be found e.g. in  \cite{CDDSY} as well.
\end{remark}

The crossing number can be similarly defined on a subset of $\mathcal{PS}et(n)$,  namely, the set of {\it matchings} of $[n]$, which we define now.

\begin{definition}
A partition $\pi\in \mathcal{PS}et(n)$ in which each block contains at most $2$ elements is called a {\em matching}. If $n$ is even and all the blocks are of size $2$, then $\pi$ is called a {\em perfect matching}.
A matching of the set $[n]$ can be depicted as a {\em chord diagram}, which is a circle containing $n$ points, labeled by the numbers $1,\dots, n$ ordered clockwise.  Each block of size $2$ is represented by a chord connecting its two elements, see Figure \ref{crossing perfect}. Note that any matching on $n$ points can be also considered as an involution of the set $[n]$.
\end{definition}

\begin{figure}[H]
    \centering
\hspace{-150pt}\begin{tikzpicture}
\node[draw=none,rotate=45, minimum size=3cm,regular polygon,regular polygon sides=6] (a) {};
  \fill (a.corner 1) circle[radius=2pt] node[shift={(72+35:0.4)}] {1};
  \fill (a.corner 2) circle[radius=2pt] node[shift={(144+35:0.4)}] {6};
  \fill (a.corner 3) circle[radius=2pt] node[shift={(216+35:0.4)}] {5};
  \fill (a.corner 4) circle[radius=2pt] node[shift={(288+35:0.4)}] {4};
  \fill (a.corner 5) circle[radius=2pt] node[shift={(35:0.4)}] {3};
    \fill (a.corner 6) circle[radius=2pt] node[shift={(35:0.4)}] {2};

\draw[black]     (a.corner 1) -- (a.corner 3);
\draw[black]     (a.corner 2) -- (a.corner 5);
\draw[black]     (a.corner 4) -- (a.corner 6);

\draw (0,0) circle (1.5cm);
\end{tikzpicture}

\vspace{-120pt}\hspace{150pt}\begin{tikzpicture}
\node[draw=none,rotate=45, minimum size=3cm,regular polygon,regular polygon sides=6] (a) {};
  \fill (a.corner 1) circle[radius=2pt] node[shift={(72+35:0.4)}] {1};
  \fill (a.corner 2) circle[radius=2pt] node[shift={(144+35:0.4)}] {6};
  \fill (a.corner 3) circle[radius=2pt] node[shift={(216+35:0.4)}] {5};
  \fill (a.corner 4) circle[radius=2pt] node[shift={(288+35:0.4)}] {4};
  \fill (a.corner 5) circle[radius=2pt] node[shift={(35:0.4)}] {3};
    \fill (a.corner 6) circle[radius=2pt] node[shift={(35:0.4)}] {2};

\draw[black]     (a.corner 1) -- (a.corner 3);
\draw[black]     (a.corner 2) -- (a.corner 5);

\draw (0,0) circle (1.5cm);
\end{tikzpicture}

\caption{Left: a perfect matching, right: a non-perfect matching}
\label{crossing perfect}
\end{figure}

\begin{definition}\label{definition of }
The {\em crossing number of a matching} is the crossing number of its corresponding partition as defined above, which can also be seen as the number of pairs of crossing chords in the chord diagram.
\end{definition}
The generating function of all perfect matchings on $2m$ points with respect to the crossing number is given by {\it Touchard-Riordan polynomials}, which are defined as follows, see \cite{Riordan75}:
$$\sum\limits_{M}q^{{\rm cr}(M)}=T_{2m}(q):=\frac{1}{(1-q)^m}\sum\limits_{i=0}^m(-1)^i \cdot \frac{2i+1}{2m+1}\binom{2m+1}{m-i}q^{\binom{i+1}{2}},$$
where $M$ runs through all perfect matchings on $2m$ points.
Since isolated points have no effect on the crossing number, we can easily obtain the generating function for all the matchings on $n$ points (not necessarily perfect), as follows.
Let $T_{n,j}(q)$ be the generating function for the number of matchings on $n$ points with $j$ chords with respect to the crossing number.
Then we clearly  have $p= n -2j$
unmatched points,  and thus we have:
$T_{n,j}(q)=\binom{n}{2j}T_{2j}(q)$.

\subsection{Motzkin numbers and Motzkin triangle}
\label{subsec:motzkin numbers}
Matchings $m$ satisfying ${\rm cr}(m)=0$, i.e. without crossings, deserve special attention as they are counted by the {\it Motzkin numbers}, as originally introduced by Motzkin \cite{Motzkin}. The Motzkin numbers have other various combinatorial interpretations, see \cite{DS} and sequence A001006 in OEIS \cite{OEIS}.
Their defining recursion is:
$$M_n= M_{n-1}+ \sum\limits_{i=0}^{n-2}M_i M_{n-2-i} = \frac{2n+1}{n+2}M_{n-1} + \frac{3n-3}{n+2}M_{n-2},$$
and the first numbers are
$$M_0=1,\ M_1=1,\ 2,\ 4,\ 9,\ 21,\ 51,\ 127,\ 323.$$

\begin{example}\label{Example $M_4$}
There are $M_4=9$ ways to draw non-crossing chords between $4$ points on the circle, as presented in Figure \ref{fig:M4}.

\begin{figure}[H]
\begin{center}

    \begin{tikzpicture}[
Circ/.style={draw,shape=circle,minimum size=15mm, node contents={}}
                        ]
\node (C1) [Circ];
\fill[black]    (C1.north west) circle (2pt)
                (C1.south west) circle (2pt)
                (C1.south east) circle (2pt)
                (C1.north east) circle (2pt);
\node[  shift={(100+35:0.4)},minimum size=5mm,
         ] at (C1.north west){\rm 1};
\node[  shift={(370+35:0.4)},minimum size=5mm,
         ] at (C1.north east){\rm 2};
\node[  shift={(170+35:0.4)},minimum size=5mm,
         ] at (C1.south west){\rm 4};
\node[  shift={(280+35:0.4)},minimum size=5mm,
         ] at (C1.south east){\rm 3};

\node (C2) [Circ,right=11mm of C1];
\draw[black]     (C2.north west) -- (C2.north east);
\fill[black]    (C2.north west) circle (2pt)
                (C2.south west) circle (2pt)
                (C2.south east) circle (2pt)
                (C2.north east) circle (2pt);
\node[  shift={(100+35:0.4)},minimum size=5mm,
         ] at (C2.north west){\rm 1};
\node[  shift={(370+35:0.4)},minimum size=5mm,
         ] at (C2.north east){\rm 2};
\node[  shift={(170+35:0.4)},minimum size=5mm,
         ] at (C2.south west){\rm 4};
\node[  shift={(280+35:0.4)},minimum size=5mm,
         ] at (C2.south east){\rm 3};
\node (C3) [Circ,right=11mm of C2];
\draw[black]     (C3.south east) -- (C3.north east);
\fill[black]    (C3.north west) circle (2pt)
                (C3.south west) circle (2pt)
                (C3.south east) circle (2pt)
                (C3.north east) circle (2pt);
\node[  shift={(100+35:0.4)},minimum size=5mm,
         ] at (C3.north west){\rm 1};
\node[  shift={(370+35:0.4)},minimum size=5mm,
         ] at (C3.north east){\rm 2};
\node[  shift={(170+35:0.4)},minimum size=5mm,
         ] at (C3.south west){\rm 4};
\node[  shift={(280+35:0.4)},minimum size=5mm,
         ] at (C3.south east){\rm 3};
\node (C4) [Circ,right=11mm of C3];
\draw[black]     (C4.south west) -- (C4.south east);
\fill[black]    (C4.north west) circle (2pt)
                (C4.south west) circle (2pt)
                (C4.south east) circle (2pt)
                (C4.north east) circle (2pt);
\node[  shift={(100+35:0.4)},minimum size=5mm,
         ] at (C4.north west){\rm 1};
\node[  shift={(370+35:0.4)},minimum size=5mm,
         ] at (C4.north east){\rm 2};
\node[  shift={(170+35:0.4)},minimum size=5mm,
         ] at (C4.south west){\rm 4};
\node[  shift={(280+35:0.4)},minimum size=5mm,
         ] at (C4.south east){\rm 3};
\node (C5) [Circ,right=11mm of C4];
\draw[black]     (C5.north west) -- (C5.south west);
\fill[black]    (C5.north west) circle (2pt)
                (C5.south west) circle (2pt)
                (C5.south east) circle (2pt)
                (C5.north east) circle (2pt);
\node[  shift={(100+35:0.4)},minimum size=5mm,
         ] at (C5.north west){\rm 1};
\node[  shift={(370+35:0.4)},minimum size=5mm,
         ] at (C5.north east){\rm 2};
\node[  shift={(170+35:0.4)},minimum size=5mm,
         ] at (C5.south west){\rm 4};
\node[  shift={(280+35:0.4)},minimum size=5mm,
         ] at (C5.south east){\rm 3};
\node (C6) [Circ,below=11mm of C1];
\draw[black]     (C6.south west) -- (C6.north east);
\fill[black]    (C6.north west) circle (2pt)
                (C6.south west) circle (2pt)
                (C6.south east) circle (2pt)
                (C6.north east) circle (2pt);
\node[  shift={(100+35:0.4)},minimum size=5mm,
         ] at (C6.north west){\rm 1};
\node[  shift={(370+35:0.4)},minimum size=5mm,
         ] at (C6.north east){\rm 2};
\node[  shift={(170+35:0.4)},minimum size=5mm,
         ] at (C6.south west){\rm 4};
\node[  shift={(280+35:0.4)},minimum size=5mm,
         ] at (C6.south east){\rm 3};
\node (C7) [Circ,right=11mm of C6];
\draw[black]     (C7.north west) -- (C7.south east);
\fill[black]    (C7.north west) circle (2pt)
                (C7.south west) circle (2pt)
                (C7.south east) circle (2pt)
                (C7.north east) circle (2pt);
\node[  shift={(100+35:0.4)},minimum size=5mm,
         ] at (C7.north west){\rm 1};
\node[  shift={(370+35:0.4)},minimum size=5mm,
         ] at (C7.north east){\rm 2};
\node[  shift={(170+35:0.4)},minimum size=5mm,
         ] at (C7.south west){\rm 4};
\node[  shift={(280+35:0.4)},minimum size=5mm,
         ] at (C7.south east){\rm 3};
\node (C8) [Circ,right=11mm of C7];
\draw[black]     (C8.north west) -- (C8.south west);
\draw[black]      (C8.north east) -- (C8.south east);
\fill[black]    (C8.north west) circle (2pt)
                (C8.south west) circle (2pt)
                (C8.south east) circle (2pt)
                (C8.north east) circle (2pt);
\node[  shift={(100+35:0.4)},minimum size=5mm,
         ] at (C8.north west){\rm 1};
\node[  shift={(370+35:0.4)},minimum size=5mm,
         ] at (C8.north east){\rm 2};
\node[  shift={(170+35:0.4)},minimum size=5mm,
         ] at (C8.south west){\rm 4};
\node[  shift={(280+35:0.4)},minimum size=5mm,
         ] at (C8.south east){\rm 3};
\node (C9) [Circ,right=11mm of C8];
\draw[black]     (C9.south east) -- (C9.south west);
\draw[black]      (C9.north east) -- (C9.north west);
\fill[black]    (C9.north west) circle (2pt)
                (C9.south west) circle (2pt)
                (C9.south east) circle (2pt)
                (C9.north east) circle (2pt);
\node[  shift={(100+35:0.4)},minimum size=5mm,
         ] at (C9.north west){\rm 1};
\node[  shift={(370+35:0.4)},minimum size=5mm,
         ] at (C9.north east){\rm 2};
\node[  shift={(170+35:0.4)},minimum size=5mm,
         ] at (C9.south west){\rm 4};
\node[  shift={(280+35:0.4)},minimum size=5mm,
         ] at (C9.south east){\rm 3};
\end{tikzpicture}
\end{center}
\caption{$9$ ways to draw non-crossing chords between $4$ points on the circle.}\label{fig:M4}
\end{figure}

\end{example}

\medskip

There is a refinement of Motzkin numbers, called {\it Motzkin triangle} (see sequence A055151 in the OEIS \cite{OEIS}), and denoted $M_{n,k}$, whose first 9 lines are presented in Table \ref{tab:A055151}.

\begin{table}[H]
$${\small
\begin{NiceArray}{|w{c}{0.4cm}|c;c;c;c;c|}
\hline
\Gape[0.35cm][0.35cm]{\diagbox{\,\,n}{k\,}} & 0 & 1 & 2 & 3 & 4\\
\hline
0  &  1  & &&& \\
1  &  1  & &&& \\
2  &  1  & 1 &&& \\
3  &  1 &  3 &&&\\
4  &  1 & 6 & 2 &&\\
5  &  1 & 10 & 10 &&\\
6  &  1 & 15 & 30 & 5 & \\
7  &  1 & 21 & 70 & 35& \\
8  &  1 & 28 & 140 & 140 & 14\\
\hline
\end{NiceArray}}$$

\caption{Table form of sequence A055151}
\label{tab:A055151}
\end{table}

\medskip

The element $M_{n,k}$ in the Motzkin triangle  counts the number of ways to draw $k$ non-crossing chords between $n$ points on the circle. For example, the fifth line  of the triangle, ``1\ 6\ 2'', counts one diagram of $4$ points on a circle without chords, $6$ diagrams  with one chord and $2$ diagrams with 2 chords (see Figure  \ref{fig:M4} above).

\begin{remark}
Note that the combinatorial explicit connection between Touchard-Riordan polynomials and Motzkin numbers is given by $T_{2n}(0)=M_{2n}$. We use this observation in Corollary \ref{specializations}(3)--(4) below.
\end{remark}

\section{Schur-positivity of various set partitions with respect to ${\rm Short}$}\label{section on schur positivity for des sharing block}
Let ${\mathcal{PS}}et(n,b)$ be the set of all set partitions of the set $[n]$ with $b$ blocks. For\break $\pi \in {\mathcal{PS}}et(n,b)$, define: $${\rm Short}(\pi)=\{i \in [n-1] \mid i \mbox{ and } i+1 \mbox { are in the same block} \}.$$
Note that this set parameter is not sparse in the sense of Definition 1.7 of Marmor \cite{Marmor}.

\medskip

Our first main result deals with Schur-positivity with respect to the set parameter ${\rm Short}$ we have just defined:

\begin{thm}
\begin{equation}\label{q touchard}
\sum\limits_{\pi \in {\mathcal{PS}}et(n,b)} q^{{\rm cr} (\pi)} \mathcal{F}_{n,{\rm Short}(\pi)} =\sum\limits_{k=0}^{n-1} T_{n-1-k,n-k-b}(q) s_{(n-k,1^k)}
\end{equation}
\end{thm}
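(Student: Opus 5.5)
The plan is to reduce the identity to a statement about partitions with no ``short'' arcs. Then the hook-shape Schur functions come for free from the description of descent sets of hook tableaux at the end of Section~\ref{sec:prel_quasi}. Concretely, since every $k$-subset of $[n-1]$ is the descent set of exactly one $T\in{\rm SYT}(n-k,1^k)$, Theorem~\ref{gessel} gives $s_{(n-k,1^k)}=\sum_{|D|=k}\mathcal{F}_{n,D}$. So it suffices to prove the following: for every $D\subseteq[n-1]$ with $|D|=k$,
\[
\sum_{\substack{\pi\in\mathcal{PS}et(n,b)\\ {\rm Short}(\pi)=D}} q^{{\rm cr}(\pi)} \;=\; T_{n-1-k,\,n-k-b}(q).
\]
In particular the left-hand side should depend only on $|D|$. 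Grouping the left-hand side of (\ref{q touchard}) by the value of ${\rm Short}(\pi)$ then yields the theorem.

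\textbf{Step 1 (contraction).} Given $\pi$ with ${\rm Short}(\pi)=D$, collapse every maximal run $i,i+1,\dots,i+r$ with $i,\dots,i+r-1\in D$ to a single point, and relabel. This produces a partition $\bar\pi$ of $[n-k]$ with the same number $b$ of blocks and with ${\rm Short}(\bar\pi)=\emptyset$. Conversely, $D$ tells us exactly how to re-expand, so $\pi\mapsto\bar\pi$ is a bijection onto partitions of $[n-k]$ with $b$ blocks and no two consecutive integers in a common block. It preserves ${\rm cr}$, for two reasons. First, the arcs $(i,i+1)$ are straight segments with no point strictly inside them, so they cross nothing. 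Second, contracting such a segment does not change the relative order of the endpoints of the remaining arcs. This step is routine.

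\textbf{Step 2 (the core bijection).} With $m=n-k$, it remains to show that the crossing-generating function of partitions of $[m]$ with $b$ blocks and no short arcs equals $T_{m-1,m-b}(q)$, the crossing-generating function of the appropriate matchings on $m-1$ points. Note that such a partition has exactly $m-b$ arcs, all of length at least $2$. The natural first attempt is to send each arc $(i,j)$ to $(i,j-1)$ on $[m-1]$. This loses nothing because $j\ge i+2$, and it preserves every crossing relation $i_r<i_s<j_r<j_s$, since the shift cannot create an equality $i_s=j_r-1$ except when $j_r=i_s+1$.

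The difficulty is that a point may be both a left endpoint and a right endpoint in $\pi$, so the image is in general not a matching. This must be repaired, for example by a vacillating-tableau/oscillating-type encoding in the spirit of Chen et al. that separates ``openers'' and ``closers'' while keeping the crossing statistic, or by an insertion recursion on $m$ that matches the recursion of $T_{n,j}(q)=\binom{n}{2j}T_{2j}(q)$. I expect this to be the main obstacle.

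Before committing to Step 2, I would check small cases against the precise indexing convention of $T_{n,j}$. For instance, for $m=4$ and $b=2$ the only admissible partition is $13\mid 24$, with ${\rm cr}=1$, and the right-hand side must produce exactly $q$. This check will pin down whether the second index of $T$ counts chords or some complementary quantity, and will therefore fix which family of matchings the bijection in Step 2 should target.
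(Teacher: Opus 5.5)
Your reduction is correct and matches the paper's. It uses Gessel's theorem with $s_{(n-k,1^k)}=\sum_{|D|=k}\mathcal{F}_{n,D}$. Your contraction in Step 1 is the paper's inflation by $\varphi(j)=j-|\{i\in\mathrm{Des}(T): i<j\}|$ read backwards. Your ``natural first attempt'' is the inverse of the paper's $k=0$ map (chord $\{i,j\}\mapsto$ block $\{i,j+1\}$, then merge blocks sharing a point).

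The gap is Step 2, and no cleverer encoding or recursion can close it, because the identity it targets is false. Your own sanity check is a counterexample. With the paper's convention, $T_{3,2}(q)=\binom{3}{4}T_4(q)=0$, whereas $13\mid 24$ contributes $q$. The complementary reading (one chord on three points) gives $3$, which is not $q$ either. So for $n=4$, $b=2$ the coefficient of $\mathcal{F}_{4,\emptyset}$ is $q$ on the left of the displayed identity and $0$ on the right. At $q=1$ the left side has $|\mathcal{PS}et(4,2)|=7$ terms, while the right side has only $T_{2,1}(1)\cdot 3+T_{1,0}(1)\cdot 3=6$. No reindexing of $T$ rescues this. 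Summed over $b$, partitions of $[m]$ with no two consecutive integers in a block are counted by the Bell number $B(m-1)$, while matchings on $m-1$ points are counted by involution numbers ($5$ versus $4$ for $m=4$).

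The collision you isolated is exactly what the paper's proof overlooks when it says the chord diagram is ``straightforward to recover.'' The shift $\mathcal{D}_{m-1,m-b}\to\mathcal{PS}et(m,b,0)$ is injective and crossing-preserving. But its image omits every partition in which some $p$ has a successor $c$ in its block while $p+1$ has a predecessor $a$ in its block. In that case $a<p<p+1<c$, so every omitted partition has a crossing. Hence the identity does hold at $q=0$, so the non-crossing, Motzkin-triangle specializations are fine. It fails in positive degree: for $n=5$, $b=3$, $k=0$ the true coefficient is $2+5q$, against $T_{4,2}(q)=2+q$. What your reduction and Step 1 actually establish is this: the coefficient of $s_{(n-k,1^k)}$ is the crossing generating function of partitions of $[n-k]$ into $b$ blocks with no two consecutive integers in a common block. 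At $q=1$ this equals the Stirling number of the second kind $S(n-k-1,b-1)$, not $T_{n-1-k,n-k-b}(1)$.
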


\begin{proof}
Let $\mathcal{D}_{n,j}$ be the set of chord diagrams having $n$ points and $j$ chords, and let $\mathcal{PS}et(n,b,k)$ be the set of set partitions of $[n]$ having $b$ blocks and $k$ descents.

One can consider both sides of Equation (\ref{q touchard}) as polynomials in the variable $q$, whose coefficients are in the ring of quasi-symmetric functions. To prove the equality, we have to show that the corresponding coefficients are equal. We do that with the aid of Theorem \ref{gessel}, by invoking a bijection
$${\rm SYT}(n-k,1^k)\times \mathcal{D}_{n-1-k,n-k -b} \to \mathcal{PS}et(n,b,k),$$ which preserves the crossing number.

We start with the case $k=0$, i.e., we present a bijection from the set of pairs $$P=\left\{\left(\raisebox{-.4ex}{\begin{smallytableau} {1}&{2}&\cdots& n  \end{smallytableau}}\ , \ C\right) \mid C \in \mathcal{D}_{n-1,n-b}\right\}$$ to $\mathcal{PS}et(n,b,0)$
which, as defined above, is the set of set partitions $\pi$ of $[n]$, having $b$ blocks and ${\rm Short}(\pi)=\emptyset$. Let $C\in \mathcal{D}_{n-1,n-b}$, i.e., a chord diagram on $n-1$ points with $n-b$  chords. The bijection has two steps: First, for each chord $\{i,j\}$ of $C$ with $i<j$, we create the block $\{i,j+1\}$. Next, we unify every two intersecting blocks. The points unmatched by chords contribute singletons to the resulting set partition.

Since the crossing number depends solely on the arcs that connect consecutive elements in the blocks of the partition, it is easy to see that the resulting set partition, consisting of $b$ blocks, has the same number of crossings as in the chord diagram.

In the opposite direction, when we are given a set partition $\pi$ with ${\rm Short}(\pi)=\emptyset$, it is straightforward to recover the chord diagram that is mapped to it.

For example, for the chord diagram in the left part of Figure \ref{example of bijection_cross}, which is an element of $\mathcal{D}_{7,3}$, we first construct the blocks $\{1,4\},\{2,6\}, \{4,8\}$, and then we send it to the set partition $$148\mid 26\mid 3\mid 5\mid 7 \in \mathcal{PS}et (8,5,0).$$
Note that there are two crossings in both the chord diagram and the corresponding set partition, see Figure \ref{example of bijection_cross}.

\begin{figure}[H]

\hspace{-250pt}\vspace{-80pt}\begin{tikzpicture}
\node[draw=none,rotate=45, minimum size=3cm,regular polygon,regular polygon sides=7] (a) {};
  \fill (a.corner 1) circle[radius=2pt] node[shift={(72+35:0.4)}] {1};
  \fill (a.corner 2) circle[radius=2pt] node[shift={(144+35:0.4)}] {7};
  \fill (a.corner 3) circle[radius=2pt] node[shift={(216+35:0.4)}] {6};
  \fill (a.corner 4) circle[radius=2pt] node[shift={(288+35:0.4)}] {5};
  \fill (a.corner 5) circle[radius=2pt] node[shift={(35:0.4)}] {4};
  \fill (a.corner 6) circle[radius=2pt] node[shift={(35:0.4)}] {3};
  \fill (a.corner 7) circle[radius=2pt] node[shift={(35:0.4)}] {2};

\draw[black]     (a.corner 1) -- (a.corner 6);
\draw[black]     (a.corner 7) -- (a.corner 4);
\draw[black]     (a.corner 5) -- (a.corner 2);
\draw (0,0) circle (1.5cm);
\end{tikzpicture}

\hspace{150pt}\begin{tikzpicture}[scale=1]

  \foreach \x in {1,...,8} {
    \fill (\x,0) circle (2pt);
    \node[below] at (\x,0) {\x};
  }

\fill[gray] (2.72,0.69) circle (2pt);
\fill[gray] (5.37,0.66) circle (2pt);

  \def\arc#1#2#3{
    \draw (#1,0) arc[start angle=180, end angle=0, x radius={(#2-#1)/2}, y radius=#3];
  }

  \arc{1}{4}{0.7}
  \arc{4}{8}{0.7}
  \arc{2}{6}{0.9}
\end{tikzpicture}

\vspace{30pt}
\caption{Left: A chord diagram. Right: The linear graph of the resulting set partition.}
\label{example of bijection_cross}
\end{figure}

We turn now to the case $k \geq 1$. Clearly, every standard Young tableau of shape $(n-k,1^k)$ has exactly $k$ descents. Given a pair $(T,C)\in {\rm SYT}(n-k,1^k) \times \mathcal{D}_{n-1-k,n-k -b}$, we first define a function $\varphi:[n]\rightarrow [n-k]$ by:
$$\varphi(j)=j-|\{i\in {\rm Des}(T) \mid i<j\}|.$$
Next, we apply the procedure described above for the case $k=0$ for the chord diagram $C$ having $n-1-k$ points on a circle with $n-k-b$ chords, resulting in a partition of the set $[n-k]$, having $b$ blocks. Finally, we obtain a set partition of $[n]$ by replacing each element of $[n-k]$ by the elements in its preimage by $\varphi$, see Example \ref{example bijection k descents} below.

\medskip

Note that in the last step, while inflating the set $[n-k]$ to the set $[n]$ using $\varphi$, the crossing number of the set partition does not change, as we count only crossings of arcs, while the inflation contributes only straight segments.

\medskip

In the opposite direction, note that when we are given a set partition $\pi$, one can easily recover the sets of the form ${\rm Short}(\pi)$, which induce the descent set of the tableau, and the associated function $\varphi$. Then, it is straightforward to recover also the chord diagram which is mapped to it.
\end{proof}

\begin{example}\label{example bijection k descents}
Given a standard Young tableau $T$, with ${\rm Des}(T)=\{1,2,5,7,9\}\subseteq [13]$ and a chord diagram $C$ as in Figure \ref{example of bijection_cross} above, we first define
$\varphi: [13] \to [8]$, using the tableau $T$, as follows: $$\varphi(1)=\varphi(2)=\varphi(3)=1, \ \varphi(4)=2,\ \varphi(5)=\varphi(6)=3,\ \varphi(7)=\varphi(8)=4,$$
$$\varphi(9)=\varphi(10)=5,\ \varphi(11)=6, \ \varphi(12)=7 \mbox{ and } \varphi(13)=8.$$
Next, we use the chord diagram $C$ to obtain the set partition, as computed in the case $k=0$:
$$148\mid 26 \mid 3 \mid 5 \mid 7.$$
Finally, we replace each element of the set $[8]$ by the elements in its preimage by $\varphi$, to obtain the set partition:
$$\{\{\underbrace{1,2,3}_{\varphi^{-1}(1)},\underbrace{7,8}_{\varphi^{-1}(4)},\underbrace{13}_{\varphi^{-1}(8)}\},\{\underbrace{4}_{\varphi^{-1}(2)},\underbrace{11}_{\varphi^{-1}(6)}\},\{\underbrace{5,6}_{\varphi^{-1}(3)}\},\{\underbrace{9,10}_{\varphi^{-1}(5)}\},
\{\underbrace{12}_{\varphi^{-1}(7)}\}\}\in \mathcal{PS}et(13,5,5).$$
\end{example}

\bigskip

Plugging $q=1$ in Equation (\ref{q touchard}) induces an explicit Schur-positivity result for the set of set partitions, while substituting $q=0$ reduces us to a Schur-positivity result for non-crossing partitions, where elements from the Motzkin triangle replace the  coefficients of the Touchard-Riordan polynomials, so we have the following:

\begin{theorem}\label{specializations}\ \\

\begin{enumerate}
\item $\mathcal{Q}_n({\mathcal{PS}}et(n,b))=\sum\limits_{k=0}^{n-1} T_{n-1-k,n-b-k}(1) s_{(n-k,1^k)}$
\item $\mathcal{Q}_n({\mathcal{PS}}et(n))=\sum\limits_{k=0}^{n-1} \sum\limits_{b=1}^{n-k} T_{n-1-k,n-b-k}(1) s_{(n-k,1^k)}$
\item $\mathcal{Q}_n({\mathcal NC}(n,b))=\sum\limits_{k=0}^{n-1} M_{n-1-k,n-b-k} s_{(n-k,1^k)}$
\item $\mathcal{Q}_n({\mathcal NC}(n))=\sum\limits_{k=0}^{n-1} M_{n-k-1} s_{(n-k,1^k)}$.
\end{enumerate}

\end{theorem}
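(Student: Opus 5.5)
The plan is to obtain all four items as specializations of Equation (\ref{q touchard}) in the preceding theorem, read as an identity of polynomials in $q$ with quasi-symmetric coefficients. For item (1), set $q=1$. On the left, each $\pi\in\mathcal{PS}et(n,b)$ then contributes exactly $\mathcal{F}_{n,{\rm Short}(\pi)}$, so the left side is $\mathcal{Q}_n(\mathcal{PS}et(n,b))$ with ${\rm Short}$ as the descent function. The right side becomes $\sum_k T_{n-1-k,n-k-b}(1)\,s_{(n-k,1^k)}$. The index convention needs one check: $T_{m,j}$ with $j<0$ or $2j>m$ is interpreted as $0$. This is consistent with the bijection in the proof, since $\mathcal{D}_{m,j}$ is empty in those cases. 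It also explains why the terms with $k>n-b$ vanish.

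Item (2) follows from item (1) by summing over $b=1,\dots,n$, because $\mathcal{PS}et(n)=\bigsqcup_b \mathcal{PS}et(n,b)$ and $\mathcal{Q}_n$ is additive over disjoint unions. Interchanging the two finite sums gives, for each $k$, the coefficient $\sum_{b} T_{n-1-k,n-b-k}(1)$. This sum can be restricted to $1\le b\le n-k$, since for larger $b$ the second index is negative.

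For item (3), set $q=0$. Then $q^{{\rm cr}(\pi)}$ equals $1$ exactly when ${\rm cr}(\pi)=0$ and vanishes otherwise. Under the arc-based linear-graph definition of crossings, a set partition has ${\rm cr}(\pi)=0$ exactly when it is non-crossing, so the left side is $\mathcal{Q}_n({\mathcal NC}(n,b))$. On the right, $T_{m,j}(0)$ counts matchings on $m$ points with $j$ chords and no crossings. Noncrossing matchings on a line and noncrossing chord diagrams on a circle with the same labels are the same objects, so $T_{m,j}(0)=M_{m,j}$ by the combinatorial description of the Motzkin triangle.

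One caution concerns the paper's formula $T_{n,j}(q)=\binom{n}{2j}T_{2j}(q)$. It is false at $q=0$, because noncrossing depends on the positions of the unmatched points. I would therefore not derive $T_{m,j}(0)=M_{m,j}$ from that formula. Instead I would read $T_{m,j}(q)$ as $\sum_{C\in\mathcal{D}_{m,j}}q^{{\rm cr}(C)}$, which is exactly the quantity the bijection in the proof of (\ref{q touchard}) produces, and evaluate it directly. This identification is the main subtle point, so I would state it explicitly. The one additional check is that the bijection preserves crossings (already shown), so non-crossing partitions correspond exactly to noncrossing diagrams.

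For item (4), sum item (3) over $b$. For fixed $k$ this gives $\sum_{j\ge 0} M_{n-1-k,j}=M_{n-1-k}$, since the rows of the Motzkin triangle sum to the Motzkin numbers: every noncrossing chord diagram on $m$ points has some number $j$ of chords. This completes all four items.
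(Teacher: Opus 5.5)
Your route is the paper's own: specialize Equation~(\ref{q touchard}) at $q=1$ and $q=0$. The gap is that (\ref{q touchard}) is false, and items (1) and (2) fail with it. The map $(T,C)\mapsto\pi$ in its proof is injective but not surjective onto $\mathcal{PS}et(n,b,k)$. After contracting runs, a ${\rm Short}$-free partition of $[m]$ is hit only if the chords $\{x,y-1\}$ read off from its arcs $(x,y)$ are pairwise disjoint. This fails whenever some $p$ is the left end of one arc and $p+1$ the right end of another; for instance, $13\mid 24$ would need the chords $\{1,2\}$ and $\{2,3\}$.

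Here is a concrete counterexample, $n=4$, $b=2$. The seven partitions have ${\rm Short}$-sets $\emptyset,\{1\},\{2\},\{3\},\{1,2\},\{1,3\},\{2,3\}$, so $\mathcal{Q}_4(\mathcal{PS}et(4,2))=s_{(4)}+s_{(3,1)}+s_{(2,1,1)}$. Item (1) instead gives $s_{(4)}$ the coefficient $T_{3,2}(1)=0$; already $7\neq 3+3$. Similarly, in item (2) the coefficient of $s_{(4)}$ is the number $B(3)=5$ of ${\rm Short}$-free partitions of $[4]$, not the number $4$ of matchings on $3$ points. So no argument can prove (1) and (2) as stated. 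The correct coefficients are $S(n-1-k,b-1)$ (Stirling numbers of the second kind) and $B(n-1-k)$, because ${\rm Short}$-free partitions of $[m]$ into $b$ blocks are equinumerous with partitions of $[m-1]$ into $b-1$ blocks.

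Items (3) and (4) are true, but setting $q=0$ in a false polynomial identity does not prove them. You need the constant-term statement on its own. Your sentence that non-crossing partitions ``correspond exactly'' to noncrossing diagrams silently assumes the surjectivity that fails.

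The missing step is that every partition missed by the map has a crossing. Indeed, if $(x,p+1)$ and $(p,y)$ are arcs, they lie in different blocks by ${\rm Short}$-freeness, and $x<p<p+1<y$. Hence the map restricts to a bijection from crossing-free matchings on $n-1-k$ points with $n-k-b$ chords onto non-crossing partitions with $b$ blocks and a prescribed ${\rm Short}$-set of size $k$. Crossings correspond exactly: for disjoint chords one has $i'\neq j$, so $i<i'<j<j'$ holds if and only if $i<i'<j+1<j'+1$. This gives (3), and (4) follows by summing over $b$ as you describe.

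Finally, your caution about $T_{n,j}(q)=\binom{n}{2j}T_{2j}(q)$ is unfounded. Unmatched points never take part in a crossing, so the formula holds at $q=0$ as well and gives $T_{n,j}(0)=\binom{n}{2j}C(j)=M_{n,j}$. The genuine slip in that part of the paper is the remark $T_{2n}(0)=M_{2n}$; the correct value is the Catalan number $C(n)$.
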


\section{The Schur-positivity of  $\mathcal{PS}et(n)$ with respect to the parameter ${\rm Desing}$}\label{subsection schur positivity with respet to desing}

In this section, we deal with a second result of Schur-positivity of $\mathcal{PS}et(n)$ with respect to a different parameter, denoted Desing, which is defined for a given set partition as follows.

\begin{definition}\label{first definitions for Desing}
For a set partition $\pi\in \mathcal{PS}et(n)$, we denote by ${\rm Sing }(\pi)$ the set of singletons of  $\pi$.

Let $${\rm Desing}(\pi)=\{i \mid i \in Sing(\pi),i+1 \notin Sing(\pi) \}=\{d_1< \cdots <d_s\},$$ and define:
${\rm desing}(\pi)=|{\rm Desing}(\pi)|=s$.

We will also use the set: ${\rm SucDes}(\pi)=\{i+1 \mid i \in {\rm Desing}(\pi)\}$.
\end{definition}

Note that the ${\rm Desing}$ parameter is {\it sparse} in the sense of Definition 1.7 of Marmor \cite{Marmor}.

\medskip

Moreover, we define:
\begin{definition}\label{def of h_pi}
\begin{equation}
H(\pi)=[n] \, \backslash \, ([1,d_1]  \cup {\rm Sing}(\pi) \cup {\rm SucDes}(\pi)).
\end{equation}

For each $i \in {\rm Desing}(\pi)$, let $\ell\geq 0$ be maximal such that $i-\ell,i-\ell+1,\dots,i-1$ are singletons in $\pi$ preceding $i$. Such a sequence will be called {\em truncated run} and let $t_i=\ell$, so that $t_i$ is the number of consecutive singletons preceding $i$.

For a given $h \in H(\pi)$, note that $d_i < h < d_{i+1}$ for some $0\leq i \leq s-1$ (where $d_0=0$).  Then, define recursively
$$m_h=m_h({\pi})=\sum\limits_{j=1}^i t_{d_j}-\sum\limits_{\{g\in H(\pi)\, \mid \, g<h\}}\chi(m_g>0),$$ where $\chi(P)$ is the characteristic function of the property $P$.
\end{definition}

\begin{example}\label{first example}
Given the set partition
$$\pi=\{\{1\}, \{2\},\{3, 4,5,9,10,11,14\},\{ 6\},\{ 7\},\{8\},\{ 12\}, \{ 13\}\}, $$
we have that $${\rm Sing}(\pi)=\{1,2,6,7,8,12,13\},\  {\rm Desing}(\pi)=\{2,8,13\},\  {\rm SucDes}(\pi)=\{3,9,14\},$$ the truncated run of $2$ is $1$, the truncated run of $8$ is $6,7$ and the truncated run of $13$ is $12$, so that $t_2=1,\ t_8=2$, and $t_{13}=1$.
Moreover, we have $H(\pi)=\{4,5,10,11\}$,
Recall the definition of $m_h$ for $h\in H(\pi)$:
$$m_4= 1-0=1, \ m_5 =1-1=0, \ m_{10}=1+2-1=2, \ m_{11}=1+2-2=1.$$

\end{example}

\begin{definition}\label{def associated Bell numbers}
Let $\left\{{\rm Bell}_{\geq 2}(n)\right\}$ be the sequence of the well-known {\em associated Bell numbers}, counting the set partitions of $[n]$ without singletons. This is sequence A000296 in OEIS \cite{OEIS}:
$$1,0,1,1,4,11,41,162,715,3425,\dots$$
\end{definition}

In this section, we prove the following result:
\begin{thm}\label{main theorem}
Define for $n\geq0$:
$$e_{n-k,k} = \left\{ \begin{array}{ll}
\sum\limits_{i=2}^{n-1} {\rm Bell}_{\geq 2}(i) & k=1\\
\sum\limits_{i=k}^{n-k} {\rm Bell}_{\geq 2}(i) & (k=0) \mbox{ or } (k>1). \\
\end{array}\right.$$
Then we have:
\begin{equation}
\sum\limits_{\pi \in {\mathcal {PS}}et(n)}{\mathbf q}^{{\rm Desing}(\pi)}=\sum\limits_{k=0}^{\left\lfloor \frac{n}{2}\right\rfloor} e_{n-k,k}
\left(\sum\limits_{T \in {\rm SYT}(n-k,k)}{\mathbf q}^{{\rm Des}(T)}\right),\end{equation}
where for each $A\subseteq [n-1]$, we define ${\mathbf q}^A=q_1^{\epsilon_1}\cdots q_{n-1}^{\epsilon_{n-1}}$ and $\epsilon_i=\left\{\begin{array}{ll}
1  & i \in A\\
0 & i \notin A \end{array}\right.,$ and ${\rm Des}(T)$ was defined in Definition \ref{Des of T} above.

We conclude, using Gessel's theorem (Theorem \ref{gessel} above), that
\begin{equation}
    \mathcal{Q}_n(\mathcal{PS}et(n))=\sum\limits_{k=0}^{\left\lfloor\frac{n}{2}\right\rfloor} e_{n-k,k} s_{(n-k,k)}.
    \end{equation}
\end{thm}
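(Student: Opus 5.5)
The plan is to deduce the theorem from Marmor's theorem quoted in the Introduction, applied to $\mathcal{A}=\mathcal{PS}et(n)$ with $\mathcal{D}{\rm es}={\rm Desing}$. This reduces everything to two counting problems. The $\mathbf q$-monomial identity and the identity $\mathcal{Q}_n(\mathcal{PS}et(n))=\sum_k e_{n-k,k}s_{(n-k,k)}$ are equivalent. Indeed, the fundamental quasi-symmetric functions $\mathcal{F}_{n,D}$ are linearly independent, and by Theorem \ref{gessel} we have $s_{(n-k,k)}=\sum_{T\in{\rm SYT}(n-k,k)}\mathcal{F}_{n,{\rm Des}(T)}$. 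So both statements say that the multiset $\{{\rm Desing}(\pi)\}$ equals $e_{n-k,k}$ copies of $\{{\rm Des}(T): T\in {\rm SYT}(n-k,k)\}$, summed over $k$. It therefore suffices to prove the quasi-symmetric statement.

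\emph{Step 1: encoding.} A set partition $\pi$ is determined by its set of non-singletons $S=[n]\setminus{\rm Sing}(\pi)$ together with a singleton-free partition of $S$. Hence the number of partitions with a prescribed non-singleton set $S$ is ${\rm Bell}_{\geq 2}(|S|)$. In this language, $i\in{\rm Desing}(\pi)$ exactly when $i\notin S$ and $i+1\in S$. It follows that ${\rm Desing}$ is sparse, since $i$ and $i+1$ cannot both lie in it.

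\emph{Step 2: Marmor's condition.} Let $J=\{j_1<\dots<j_s\}\subseteq[n-1]$ be sparse. The condition ${\rm Desing}(\pi)\supseteq J$ means $J\cap S=\emptyset$ and $J+1\subseteq S$. Sparseness of $J$ gives $J\cap(J+1)=\emptyset$, so these are $2s$ distinct positions with prescribed membership. The remaining $n-2s$ positions are free. Hence
$$\left|\{\pi \mid {\rm Desing}(\pi)\supseteq J\}\right|=\sum_{r=0}^{n-2s}\binom{n-2s}{r}{\rm Bell}_{\geq 2}(r+s),$$
which depends only on $s=|J|$. Marmor's theorem then yields symmetry and Schur-positivity, with $c_k=|\{\pi \mid {\rm Desing}(\pi)=\{1,3,\dots,2k-1\}\}|$.

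\emph{Step 3: computing $c_k$.} This is the step needing care, especially at the boundary cases $k=0,1$. Suppose ${\rm Desing}(\pi)=\{1,3,\dots,2k-1\}$. Then $1,3,\dots,2k-1\notin S$ and $2,4,\dots,2k\in S$. On the tail $\{2k+1,\dots,n\}$ there must be no singleton immediately followed by a non-singleton. So the tail is an initial block $[2k+1,m]$ of non-singletons followed by singletons $m+1,\dots,n$, for some $2k\le m\le n$.

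This gives $S=\{2,4,\dots,2k\}\cup[2k+1,m]$, so $|S|=m-k$, and
$$c_k=\sum_{m=2k}^{n}{\rm Bell}_{\geq2}(m-k)=\sum_{i=k}^{n-k}{\rm Bell}_{\geq 2}(i).$$
For $k=0$ the term $i=0$ is ${\rm Bell}_{\geq2}(0)=1$, coming from the all-singleton partition. For $k=1$ the term $i=1$ vanishes because ${\rm Bell}_{\geq2}(1)=0$, so the sum equals $\sum_{i=2}^{n-1}{\rm Bell}_{\geq2}(i)$. In both cases $c_k=e_{n-k,k}$, which proves the theorem.

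As an optional strengthening, I would like an explicit map $\pi\mapsto T\in{\rm SYT}(n-k,k)$ with ${\rm Des}(T)={\rm Desing}(\pi)$ and fibres of size $e_{n-k,k}$. The data $t_i$, $H(\pi)$ and $m_h$ of Definition \ref{def of h_pi} seem designed to place the elements of $H(\pi)$ into the second row. I expect verifying uniform fibre sizes for such a map to be the main obstacle, which is why I would rely on Marmor's theorem for the proof itself.
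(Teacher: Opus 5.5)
Your proof is correct, but it takes a genuinely different route from the paper's.

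\textbf{The paper's route.} The paper does not invoke Marmor's theorem in its proof. It builds an explicit map $\varphi$ from $\mathcal{PS}et(n)$ to two-row standard Young tableaux, following Rules (0)--(2), which are driven by the truncated runs $t_i$, the set $H(\pi)$ and the counters $m_h$. It checks that $\varphi(\pi)$ is standard with ${\rm Des}(\varphi(\pi))={\rm Desing}(\pi)$. It then computes each fibre directly. For $k>1$ the fibre is a union of layers $L_j$, namely the partitions whose singleton set is a prescribed $j$-subset of the first row, for $k\le j\le n-k$. These layers arise by moving the removable entries of the first row into the non-singleton part one at a time. The crux is that the left-to-right order of these moves is forced and that $|{\rm Rem}(T)|=n-2k$. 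The cases $k=0$ and $k=1$ are handled separately.

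\textbf{Your route.} You instead verify Marmor's hypothesis by hand. Encoding $\pi$ by its non-singleton set $S$ turns ${\rm Desing}(\pi)\supseteq J$ into $2|J|$ independent membership constraints. This gives the count $\sum_r\binom{n-2s}{r}{\rm Bell}_{\geq 2}(r+s)$. You then read off $c_k$ by describing all $\pi$ with ${\rm Desing}(\pi)=\{1,3,\dots,2k-1\}$.

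Both counts are right, including the boundary cases. The separate $k=1$ formula is indeed only cosmetic, since ${\rm Bell}_{\geq 2}(1)=0$. Your passage between the $\mathbf q$-identity and the Schur expansion, via linear independence of the $\mathcal F_{n,D}$, is sound.

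\textbf{What your route buys.} It gives brevity and uniformity in $k$, and it avoids the delicate fibre analysis altogether. It is also portable: the same two counts go through verbatim for $\mathcal{NC}(n)$, where singletons never take part in crossings, so a prescribed $S$ contributes $r_{|S|}$. They also go through for $\mathcal{PS}et(n,b)$, where a prescribed $S$ contributes $S_{\geq 2}(|S|,b-n+|S|)$.

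\textbf{What it gives up.} It gives up exactly what the paper presents as its addition to Marmor's result (Remark~\ref{connection - Marmor for all partitions}). That is an explicit descent-preserving map onto ${\rm SYT}(n-k,k)$ with uniform fibres, the combinatorial meaning of $k$ in Corollary~\ref{cor: combinatorial meaning k}, and the layered description of each pre-image by number of singletons. Your guess that uniform fibre size is the hard part of a bijective version is accurate; that is where nearly all of the paper's effort goes.
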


A combinatorial connection between the length of the second row of a shape $(n-k,k)$ and the corresponding set partitions will be explained in Corollary \ref{cor: combinatorial meaning k} below.

\medskip

We immediately have the following simple enumerative result:
\begin{cor}\label{coro:bell sum}
Let $B(n)=|\mathcal{PS}et(n)|$ be the celebrated ordinary {\em Bell number}. Then, we have $$B(n)=\sum\limits_{k=0}^{\left\lfloor\frac{n}{2}\right\rfloor} e_{n-k,k} c_{n-k,k},$$
where $c_{n-k,k}$
are the {\em ballot numbers} (sequence A009766 in OEIS \cite{OEIS}), counting the number of standard Young tableaux of shape $(n-k,k)$ (see also Section \ref{visual 2} below).
\end{cor}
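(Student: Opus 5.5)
The plan is to specialize the refined identity of Theorem \ref{main theorem} at $q_1=q_2=\cdots=q_{n-1}=1$. Under this specialization every monomial ${\mathbf q}^A$ becomes $1$, whatever the subset $A\subseteq[n-1]$. The left-hand side
$$\sum_{\pi\in\mathcal{PS}et(n)}{\mathbf q}^{{\rm Desing}(\pi)}$$
then becomes $|\mathcal{PS}et(n)|=B(n)$. Note that the Desing parameter does not matter here, since each partition contributes exactly one term.

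On the right-hand side, for each $0\le k\le\lfloor n/2\rfloor$ the inner sum $\sum_{T\in{\rm SYT}(n-k,k)}{\mathbf q}^{{\rm Des}(T)}$ specializes to $|{\rm SYT}(n-k,k)|$. The coefficients $e_{n-k,k}$ do not involve the variables $q_i$, so they are unchanged. This gives
$$B(n)=\sum_{k=0}^{\lfloor n/2\rfloor}e_{n-k,k}\,|{\rm SYT}(n-k,k)|.$$

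The remaining step is to identify $|{\rm SYT}(n-k,k)|$ with the ballot number $c_{n-k,k}$. I will do this with the hook length formula, which gives $|{\rm SYT}(n-k,k)|=\binom{n}{k}-\binom{n}{k-1}=\frac{n-2k+1}{n-k+1}\binom{n}{k}$. This is exactly the entry of A009766 in row $n-k$, column $k$. Alternatively, the identification follows from the standard bijection between two-row tableaux and lattice (ballot) words, reading each entry's row. With this identification the corollary follows.

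I do not expect a genuine obstacle, since all the content sits in Theorem \ref{main theorem}. The only care needed is bookkeeping: the sum must run over $0\le k\le\lfloor n/2\rfloor$, and the special case $k=1$ of $e_{n-k,k}$ must be carried through unchanged. As a sanity check I would verify small cases. For $n=3$: $e_{3,0}=\sum_{i=0}^{3}{\rm Bell}_{\ge2}(i)=1+0+1+1=3$ and $e_{2,1}=\sum_{i=2}^{2}{\rm Bell}_{\ge2}(i)=1$. The ballot numbers are $c_{3,0}=1$ and $c_{2,1}=2$. Hence $3\cdot1+1\cdot2=5=B(3)$, as expected.
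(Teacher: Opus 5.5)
Your proposal is correct and matches the paper. The paper presents the corollary as an immediate consequence of Theorem~\ref{main theorem}: setting all $q_i=1$ (equivalently, noting that each of the $c_{n-k,k}$ tableaux of shape $(n-k,k)$ has a preimage of size $e_{n-k,k}$ under $\varphi$) gives the identity, and the paper identifies $c_{n-k,k}=|{\rm SYT}(n-k,k)|$ via the hook length formula, exactly as you do.
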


\begin{remark}
A decomposition of the Bell number $B(n)$ as a sum of terms indexed by two-row Young diagrams can be found implicitly in the representation-theoretic framework of Stanley \cite[Section 7.18]{EC2}. Our contribution here is an explicit combinatorial realization of this decomposition via a bijection with standard Young tableaux of shape $(n-k,k)$, together with a closed formula for the corresponding multiplicities.
\end{remark}

\medskip

The proof of Theorem \ref{main theorem} consists of the following parts. First, we compute the number of set partitions $\pi$ with ${\rm desing}(\pi)=0$ (Claim \ref{claim: partitions without descents}). Next, we present the mapping from set partitions to Standard Young Tableaux (Section \ref{section: mapping desing}), and then we show that the size of the pre-image of a given tableau depends only on its shape and compute it explicitly (Section \ref{section: preimage desing}).

\medskip

We start with the following observation:
\begin{claim}\label{claim: partitions without descents}
The number of set partitions $\pi  $ of $[n]$ having ${\rm desing}(\pi)=0$ is: $$e_{n,0}=\sum\limits_{i=0}^n {\rm Bell}_{\geq 2}(i).$$
\end{claim}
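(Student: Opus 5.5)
The plan is to characterize the partitions with ${\rm desing}(\pi)=0$ structurally, and then count them by the position where the singletons begin. By definition, ${\rm Desing}(\pi)=\emptyset$ means there is no $i\in[n-1]$ with $i$ a singleton and $i+1$ not a singleton. Equivalently, whenever $i\in{\rm Sing}(\pi)$ and $i<n$, also $i+1\in{\rm Sing}(\pi)$. An induction on $i$ then shows that ${\rm Sing}(\pi)$ is upward closed in $[n]$. So ${\rm Sing}(\pi)=\{i+1,i+2,\dots,n\}$ for a unique $i\in\{0,1,\dots,n\}$, where $i=n$ means there are no singletons.

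Next, fix $i$. The elements $1,\dots,i$ cannot be singletons, and no block can mix an element of $[i]$ with one of $\{i+1,\dots,n\}$, since the latter are all singletons. Hence the restriction of $\pi$ to $[i]$ is a set partition of $[i]$ without singletons. Conversely, take any singleton-free partition $\sigma$ of $[i]$ and add the singletons $\{i+1\},\dots,\{n\}$. The resulting partition has singleton set exactly $\{i+1,\dots,n\}$, so it has ${\rm desing}=0$. Note that the only possibly problematic index is $i$ itself, but $i$ is not a singleton; and for $j>i$ the successor $j+1$ is again a singleton. This gives a bijection between partitions with ${\rm desing}(\pi)=0$ and $\bigsqcup_{i=0}^n\{\text{singleton-free partitions of }[i]\}$.

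Summing over $i$ then gives $\sum_{i=0}^n{\rm Bell}_{\geq 2}(i)$, using the conventions ${\rm Bell}_{\geq2}(0)=1$ (the empty partition, corresponding to the all-singletons partition) and ${\rm Bell}_{\geq2}(1)=0$ (corresponding to $i=1$, which is impossible). This matches the $k=0$ case of $e_{n-k,k}$ in Theorem \ref{main theorem}. There is no real obstacle here. The only care needed is the boundary bookkeeping: checking the cases $i=0$ and $i=n$, and confirming that the index $n$ itself is never in ${\rm Desing}$ because ${\rm Desing}\subseteq[n-1]$.
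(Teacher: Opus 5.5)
Your proposal is correct and follows essentially the same approach as the paper: the paper's one-line proof likewise observes that the singletons must all come after the non-singleton blocks (i.e.\ form a terminal segment $\{i+1,\dots,n\}$), so one sums ${\rm Bell}_{\geq 2}(i)$ over the possible lengths $i$ of the singleton-free initial part. Your version simply supplies the details the paper leaves implicit: the upward-closure induction, the bijection with the disjoint union, and the boundary conventions ${\rm Bell}_{\geq2}(0)=1$, ${\rm Bell}_{\geq2}(1)=0$, together with ${\rm Desing}(\pi)\subseteq[n-1]$.
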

\begin{proof}
For constructing a set partition $\pi$  having ${\rm desing}(\pi)=0$, we have to locate the singletons after the blocks which are not singletons, and hence the result follows (where $n-i$ is the number of possible singletons).
\end{proof}

Note that $\left\{e_{n,0}\right\}_{n \in \mathbb{N}}$ is sequence
A160181 in OEIS \cite{OEIS}:
$$1, 1, 2, 3, 7, 18, 59, 221, 936, 4361,\dots$$

\subsection{The mapping from set partitions to Standard Young Tableaux with two rows}\label{section: mapping desing}

We define a mapping
$$\varphi:\mathcal{PS}et(n) \longrightarrow \bigcup\limits_{\scriptsize\begin{array}{c}\lambda = (n-k,k) \\ \ 0\leq k \leq \lfloor\frac{n}{2}\rfloor \end{array}} {\rm SYT}(\lambda),$$
by the following rules, for a given set partition $\pi$:

\begin{enumerate}
\item[{\bf Rule (0):}] If ${\rm desing}(\pi)=0$, then $\varphi(\pi)=\raisebox{-.4ex}{\begin{smallytableau} 1&2&\cdots&n \end{smallytableau}}$ .

\medskip

\item[{\bf Rule (1):}] Otherwise, locate every element of ${\rm Sing}(\pi)$ and every element smaller than\break $d_1=\min({\rm Desing}(\pi))$ in the first row of $\varphi(\pi)$. Locate every element of the set ${\rm SucDes}(\pi)$ in the second row of $\varphi(\pi)$. Order the elements in both rows increasingly.
Note that this rule locates all the elements in $[n]\backslash H(\pi)$ in $\varphi(\pi)$.
\medskip

\item[{\bf Rule (2):}]
Recall the definition of $m_h$ from Definition \ref{def of h_pi}. For each  $h \in H(\pi)$, we add it   to $\varphi(\pi)$ as follows:
If $m_h>0$, we locate $h$ in the second row of $\varphi(\pi)$ and otherwise, we locate it in the first row of $\varphi(\pi)$, where in both cases the location of the element $h$ in the row preserves the ascending order of that row.
\end{enumerate}

\medskip

Note that by Rule (1), the mapping we described satisfies $Desing(\pi)=Des(\varphi(\pi))$.

\begin{remark}\label{rem-pairing}
Here is a more visual way to describe the mapping presented above. We set a stack that will hold singletons which are not elements of ${\rm Desing}(\pi)$. For each $h\in H(\pi)$, let $d\in {\rm Desing}(\pi)$ be maximal such that $d<h$, and insert the elements of all truncated runs preceding $d$ increasingly into the stack. At the end of scanning $\pi$, insert into the stack the rest of the truncated runs.

Now, the decision made in Rule (2), whether to locate $h$ in the first row or in the second row is simple: locate $h$ in the second row if and only if the stack is not empty at this stage. In case the element $h$ was placed in the second row, pop out the top element of the stack.
Note that this defines an injective mapping between the elements of $H(\pi)$ located in the second row of $\varphi(\pi)$ and the singletons from the first row popped up from the stack.
\end{remark}

We illustrate the map $\varphi$ by some examples:

\begin{example}\label{cont first example}
This example continues Example \ref{first example} above. We have the set partition
$$\pi=\{\{1\}, \{2\},\{3, 4,5,9,10,11,14\},\{ 6\},\{ 7\},\{8\},\{ 12\}, \{ 13\}\}. $$
Recall that the truncated run of $2$ is $1$, the truncated run of $8$ is $6,7$ and the truncated run of $13$ is $12$.
Moreover, we have $H(\pi)=\{4,5,10,11\}$, and
$$m_4>0, \ m_5=0, \ m_{10}>0 \mbox{ and } m_{11}>0.$$

In the stack model (see Figure \ref{fig:stack example}):
\begin{itemize}
\item For $h=4$ (see step $1$), we insert the truncated run smaller than $4$ (which is only $1$), and then we pop it out. So we have the correspondence $1 \leftrightarrow 4$, and we locate $h=4$ in the second row.

\item For $h=5$ (see step $2$), we have nothing to insert to the stack, so the stack is empty and there is no pop-up. Therefore, $h=5$ is located in the first row.

\item For $h=10$ (see step $3$), we insert the truncated run smaller than $10$ (which is $6,7$), and then we pop $7$ out. So we have the correspondence $7 \leftrightarrow 10$, and we locate $h=10$ in the second row.

\item For $h=11$ (see step $4$), we have nothing to insert into the stack. Since the stack is not empty, we pop $6$ out. So we have the correspondence $6 \leftrightarrow 11$, and we locate $h=11$ in the second row.

\item While finishing the scanning of $\pi$ (see step $5$), we insert the remaining truncated run (which is only $12$) to the stack.
\end{itemize}

\medskip

Hence, we get the following SYT:
$$\varphi(\pi)={\begin{ytableau} 1&2&5&6&7&8&12&13 \\ 3&4&9&10&11&14 \end{ytableau}} \ .$$

\begin{figure}[H]
    \centering
\resizebox{0.9\textwidth}{!}{\begin{tikzpicture}[cap=round,line width=2pt]

\foreach \i in
    {1, 2, 3, 4}
  {
    \draw[-] (4*\i-2,0) -- (1+4*\i-2,0);
    \draw[-] (4*\i-2,0) -- (0+4*\i-2,1.8);
    \draw[-] (4*\i-1,0) -- (4*\i-1,1.8);
    \node[below] at (0.7+4*\i-3,0){Step \i};
  }

\foreach \i in
    {1, 2, 3, 4,5}
  {
    \draw[-] (4*\i-3.5,0) -- (1+4*\i-3.5,0);
    \draw[-] (4*\i-3.5,0) -- (0+4*\i-3.5,1.8);
    \draw[-] (4*\i-2.5,0) -- (4*\i-2.5,1.8);
  }

\draw[line width=1pt]  (1,0.5) circle (0.3cm);

\node at (1,0.5){1};

\draw[line width=1pt]  (2.5,0.5) circle (0.3cm);

\node at (2.5,0.5){1};

\node at (1.7,-1){h=4};

\node at (5.7,-1){h=5};

\draw[line width=1pt]  (9,0.5) circle (0.3cm);

\node at (9,0.5){6};

\draw[line width=1pt]  (9,1.3) circle (0.3cm);

\node at (9,1.3){7};

\draw[line width=1pt]  (10.5,0.5) circle (0.3cm);

\node at (10.5,0.5){6};

\draw[line width=1pt]  (10.5,1.3) circle (0.3cm);

\node at (10.5,1.3){7};

\node at (9.7,-1){h=10};

\draw[line width=1pt]  (13,0.5) circle (0.3cm);

\node at (13,0.5){6};

\draw[line width=1pt]  (14.5,0.5) circle (0.3cm);

\node at (14.5,0.5){6};
\node at (13.7,-1){h=11};

\node[below] at (17,0){Step 5};
\node at (17,-1){end};

\draw[line width=1pt]  (17,0.5) circle (0.3cm);

\node at (16.95,0.5){12};

\draw[line width=1pt,dashed,->] (0.5,2.5) .. controls (0.7,2.2)  .. (1,0.8);

\draw[line width=1pt,dashed,->] (2.5,0.8) .. controls (2.7,2.2)  .. (3,2.5);

\draw[line width=1pt,dashed,->] (8.5,2.5) .. controls (8.55,2.2) and (8.55,1.5) .. (8.75,0.7);

\draw[line width=1pt,dashed,->] (9.5,2.5) .. controls (9.2,2.2)  .. (9,1.6);

\draw[line width=1pt,dashed,->] (10.5,1.6) .. controls (10.7,2.2)  .. (11,2.5);

\draw[line width=1pt,dashed,->] (14.5,0.8) .. controls (14.7,2.2)  .. (15,2.5);

\draw[line width=1pt,dashed,->] (16.5,2.5) .. controls (16.7,2.2)  .. (17,0.8);

\end{tikzpicture}}
\caption{Using the stack in Example \ref{cont first example}}\label{fig:stack example}
\end{figure}

\end{example}

\begin{example}\label{example partition to table}

\begin{enumerate}
\item  Let $\pi=1\mid 23\mid 4 \mid 5\mid 678$. We have: ${\rm Sing}(\pi)=\{1,4,5\}$, ${\rm Desing}(\pi)=\{1,5\}$ and ${\rm SucDes}(\pi)=\{2,6\}$. Hence, by Rule (1), the elements $1,4,5$ take their guaranteed places in the first row of $\varphi(\pi)$ and the elements $2,6$ are placed in the second row of $\varphi(\pi)$.

In the passage to Rule (2), we have $H(\pi)=\{3,7,8\}$, $t_1=0$ and $t_5=1$.

\begin{itemize}
\item For $h=3\in H(\pi)$, we have: $m_3=t_1-0=0$ which is not positive, and so we locate $3$ in the first row.
\item For $h=7\in H(\pi)$, we have: $m_7=t_1+t_5-0=1$ which is positive, and so we locate $7$ in the second row.
\item For $h=8\in H(\pi)$, we have: $m_8=t_1+t_5-1=0$ which is not positive, and so we locate $8$ in the first row.
\end{itemize}

\medskip

Hence, we get the following SYT:
$$\varphi(\pi)={\begin{ytableau} 1&3&4&5&8 \\ 2&6&7 \end{ytableau}} \ .$$
To further elaborate on the underlying reasoning of our algorithm, in the spirit of Remark \ref{rem-pairing},  note that $i=5 \in {\rm Desing}(\pi)$ is preceded by the singleton\break $s=4\in {\rm Sing} (\pi)$, and hence we added the element $h=7$ in the second row.\\ Note also that having the sequence $3,4,5$ in the first row might have implied locating $8$  in the second row. However, this apparent implication is not correct, since only $4$ is a singleton in the set partition (and not $3$), and hence we still have the sequence $6,7$ in the second row, but $8$ returns to the first row.

\medskip

\item  Let $\pi=1\mid 2\mid 356 \mid 4 \mid 7$. We have: ${\rm Sing}(\pi)=\{1,2,4,7\}$, ${\rm Desing}(\pi)=\{2,4\}$, and ${\rm SucDes}(\pi)=\{3,5\}$.
Hence, by Rule (1), the elements $1,2,4,7$ take their guaranteed places in the first row of $\varphi(\pi)$ and the elements $3,5$ are placed in the second row of $\varphi(\pi)$.

In the passage to Rule (2), we have $H(\pi)=\{6\}$, $t_2=1$ and $t_4=0$. For the single element $h=6\in H(\pi)$, we have: $m_6=t_2+t_4-0=1$ which is positive, and so we locate $6$ in the second row, and we get:
$$\varphi(\pi)={\begin{ytableau} 1&2&4&7 \\ 3&5&6 \end{ytableau}} \ .$$
Note that $2 \in {\rm Desing}(\pi)$ is preceded by the singleton $1 \in {\rm Sing}(\pi)$, but since $4$ is a descent, we have to put it in the first row, and therefore we add the element $6$ in the second row, as a debt for the `missing' successor of the first descent.

\medskip

\item Let $\pi=1 \mid 27\mid 3\mid 4\mid 5 \mid 6$.
We have: ${\rm Sing}(\pi)=\{1,3,4,5,6\}$,\break ${\rm Desing}(\pi)=\{1,6\}$, and ${\rm SucDes}(\pi)=\{2,7\}$.
Hence, by Rule (1), the elements $1,3,4,5,6$ are in the first row of $\varphi(\pi)$ and $2,7$ are in the second row of $\varphi(\pi)$.
In the passage to Rule (2), we have $H(\pi)=\emptyset$, so Rule (2) is not applicable in this example. So we have:
$$\varphi(\pi)={\begin{ytableau} 1&3&4&5&6 \\ 2&7 \end{ytableau}} \ .$$

\medskip

\item  Let $\pi=1 \mid 237 \mid 4 \mid 5 \mid 6$.
We have: ${\rm Sing}(\pi)=\{1,4,5,6\}$, ${\rm Desing}(\pi)=\{1,6\}$, and ${\rm SucDes}(\pi)=\{2,7\}$.
Hence, by Rule (1), the elements $1,4,5,6$ are in the first row of $\varphi(\pi)$ and $2,7$ are in the second row of $\varphi(\pi)$.

In the passage to Rule (2), we have $H(\pi)=\{3\}$, $t_1=0$ and $t_6=2$. For the single element $h=3 \in H(\pi)$, we have: $m_3=t_1-0=0$ which is not positive, and so we locate $3$ in the first row.
Hence, we have:
$$\varphi(\pi)={\begin{ytableau} 1&3&4&5&6 \\ 2&7 \end{ytableau}} \ .$$
\end{enumerate}
\end{example}

\medskip

Now, we have the following crucial property of $\varphi(\pi)$:
\begin{claim}\label{indeed standard}
   $\varphi(\pi)$ is indeed a standard Young tableau.
\end{claim}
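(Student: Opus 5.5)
Both rows of $\varphi(\pi)$ are increasing by construction, so the only thing to prove is the two-row column condition. I will use the ballot (lattice-word) criterion: a filling of two rows by $[n]$ with increasing rows is a standard Young tableau if and only if, for every $j\in[n]$, the number of elements of $[j]$ in the second row is at most the number in the first row. So the plan is to scan $1,2,\dots,n$ from left to right and track the surplus
\[
D(j)=\#\{x\le j: x \text{ in row } 1\}-\#\{x\le j: x \text{ in row } 2\},
\]
and show that $D(j)\ge 0$ for all $j$. Under Rule (0) the statement is trivial, so I will assume ${\rm desing}(\pi)\ge 1$.

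\textbf{Step 1: Rule (1) alone.} I classify the elements placed by Rule (1). Everything in $[1,d_1]$ and every singleton goes to row 1. Every element of ${\rm SucDes}(\pi)$ goes to row 2, and each $d_i+1$ is immediately preceded by its own singleton $d_i$, which sits in row 1. This gives a pairing $d_i\leftrightarrow d_i+1$: every row-2 element of ${\rm SucDes}(\pi)$ is matched to a distinct, smaller row-1 element. Hence the Rule (1) elements alone never make $D$ negative. In fact I expect the sharper statement that the pairs $(d_i,d_i+1)$ contribute net zero, while each truncated-run singleton and each element of $[1,d_1-1]$ contributes $+1$.

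\textbf{Step 2: the elements of $H(\pi)$.} This is where I use the stack description in Remark \ref{rem-pairing}. An element $h\in H(\pi)$ is placed in row 2 only when the stack is nonempty, and in that case it pops a singleton $s$. That singleton lies in a truncated run preceding some $d\le h$, so $s<h$ and $s$ is in row 1. Moreover $s$ is not some $d_i$, since only non-Desing singletons are pushed, so $s$ was not already used in the pairing of Step 1. Each popped element is popped at most once. Combining the two pairings gives an injection from row 2 into row 1 that sends each element to a smaller one. This yields $D(j)\ge 0$ for every $j$, and hence row 1 is at least as long as row 2.

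\textbf{Main obstacle.} The delicate point is reconciling the recursive definition of $m_h$ in Definition \ref{def of h_pi} with the stack picture. I need to check that $m_h>0$ holds exactly when the stack is nonempty at time $h$. This should follow by induction over $H(\pi)$: the quantity $\sum_{j\le i} t_{d_j}$ counts the elements pushed so far, and the subtracted term $\sum_{g<h}\chi(m_g>0)$ counts the pops so far, so $m_h$ is precisely the stack size. A second point to verify is that singletons which are never pushed still count toward the surplus. These are the singletons after the last element of ${\rm Desing}(\pi)$, together with the truncated runs that are pushed only at the end. Such elements lie in row 1, so they only increase $D$. I will also check the boundary case where $h$ lies before $d_1$; such $h$ are excluded from $H(\pi)$ by the $[1,d_1]$ term, so nothing needs to be done there.
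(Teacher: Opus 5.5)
Your proof is correct and rests on the same key ingredient as the paper's. Each element of ${\rm SucDes}(\pi)$ is paired with its singleton $d_i$, and each element of $H(\pi)$ sent to the second row is paired with the smaller, non-Desing singleton it pops in the stack model of Remark~\ref{rem-pairing}. The paper phrases this as a three-stage insertion in which each such singleton ``pushes'' the first row one step to the right; your ballot-criterion injection is a cleaner and more rigorous bookkeeping of the same count, and your identification of $m_h$ with the stack size is exactly what justifies it.
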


\begin{proof}
For the sake of the proof, we split the application of Rule (1) into two parts. We first place the elements of ${\rm Desing}(\pi)$ in the first row of $\varphi(\pi)$ and the corresponding elements of the set ${\rm SucDes}(\pi)$ in its second row. This gives us a pair of two parallel, equal horizontal blocks, one of which is in the first row and the other in the second, see the first tableau in Example \ref{example standard tableau} below.   Obviously, at the end of this step, the elements  of the tableau $\varphi(\pi)$ increase in rows and columns.

Next, according to Rule (1), we add the elements $1,\dots ,d_1-1$ and the rest of the elements of the set ${\rm Sing}(\pi)$ to the first row of $\varphi(\pi)$. This might result in splitting the horizontal block in the first row into sub-blocks, while pushing them to the right, see the second tableau in Example \ref{example standard tableau}.
This step also does not violate the conditions of being a standard Young tableau, since the new elements are placed in order and hence the elements of the first row become smaller (or equal) with respect to the original elements in the same place before the insertion.

In the final step, we insert the elements of $H(\pi)$ in their places in the first or the second row,  as prescribed by Rule (2), see the last two tableaux in Example \ref{example standard tableau}. The elements of $H(\pi)$ which have to be inserted to the first row will not violate the tableau being standard, where the explanation is similar to that of the previous step.

By Rule (2), each element of $H(\pi)$ which is destined to find its place in the second row has a corresponding singleton that was already inserted to the first row (as can be understood implicitly from Rule (2), see Remark \ref{rem-pairing}). Thus the singleton pushes the relevant elements one step rightward, and preserves the tableau being standard.
\end{proof}

\medskip

\begin{example}\label{example standard tableau}
Let $$\pi=\{\{1,3,5\},\{2\},\{4\},\{6,9,14\},\{7\} ,\{8\},\{10\},\{11,13\},\{12\}\}.$$
We have: ${\rm Sing}(\pi)=\{2,4,7,8,10,12\}$ and ${\rm Desing}(\pi)=\{2,4,8,10,12\}$, hence, by Rule (1), the elements $2,4,8,10,12$ take their guaranteed place in the first row of $\varphi(\pi)$ and the elements ${\rm SucDes}(\pi)=\{3,5,9,11,13\}$ are placed in the second row of $\varphi(\pi)$. So we have the following temporary diagram:
$$T=\begin{Tableau}{{2,4,8,10,12},{3,5,9,11,13}}
      \draw[very thick,blue] (0.1,-0.9)--(4.9,-0.9)--(4.9,-0.1)--(0.1,-0.1)--(0.1,-0.9);
      \draw[very thick,red] (0.1,-1.9)--(4.9,-1.9)--(4.9,-1.1)--(0.1,-1.1)--(0.1,-1.9);
  \end{Tableau} ,$$
which is obviously standard. Next, by Rule (1), we add the element $1$, as all the elements before the first descent should be in the first row, and the element $7\in {\rm Sing }(\pi)$, so $T$ has the following temporary design:
$$T'=\begin{Tableau}{{1,2,4,7,8,10,12},{3,5,9,11,13}}
      \draw[very thick,blue] (1.1,-0.9)--(2.9,-0.9)--(2.9,-0.1)--(1.1,-0.1)--(1.1,-0.9);
      \draw[dashed, very thick,blue] (3.1,-0.9)--(3.9,-0.9)--(3.9,-0.1)--(3.1,-0.1)--(3.1,-0.9);
      \draw[very thick,blue] (4.1,-0.9)--(6.9,-0.9)--(6.9,-0.1)--(4.1,-0.1)--(4.1,-0.9);
      \draw[very thick,red] (0.1,-1.9)--(1.9,-1.9)--(1.9,-1.1)--(0.1,-1.1)--(0.1,-1.9);
      \draw[very thick,red] (2.1,-1.9)--(4.9,-1.9)--(4.9,-1.1)--(2.1,-1.1)--(2.1,-1.9);
  \end{Tableau} ,$$
which is still standard.

Now, we have $H(\pi) = \{6,14\}$. By Rule (2), the element $6$ will be added to the first row, so we get:
$$T''=\begin{Tableau}{{1,2,4,6,7,8,10,12},{3,5,9,11,13}}
      \draw[very thick,blue] (1.1,-0.9)--(2.9,-0.9)--(2.9,-0.1)--(1.1,-0.1)--(1.1,-0.9);
      \draw[dashed, very thick,blue] (4.1,-0.9)--(4.9,-0.9)--(4.9,-0.1)--(4.1,-0.1)--(4.1,-0.9);
      \draw[very thick,blue] (5.1,-0.9)--(7.9,-0.9)--(7.9,-0.1)--(5.1,-0.1)--(5.1,-0.9);
      \draw[very thick,red] (0.1,-1.9)--(1.9,-1.9)--(1.9,-1.1)--(0.1,-1.1)--(0.1,-1.9);
      \draw[very thick,red] (2.1,-1.9)--(4.9,-1.9)--(4.9,-1.1)--(2.1,-1.1)--(2.1,-1.9);
  \end{Tableau} ,$$
which is still standard.

Again by Rule (2), the element $14$ will be added to the second row (dashed) due to the existence of the singleton $7$ (dashed):
$$\varphi(\pi)=T'''=\begin{Tableau}{{1,2,4,6,7,8,10,12},{3,5,9,11,13,14}}
      \draw[very thick,blue] (1.1,-0.9)--(2.9,-0.9)--(2.9,-0.1)--(1.1,-0.1)--(1.1,-0.9);
      \draw[dashed, very thick,blue] (4.1,-0.9)--(4.9,-0.9)--(4.9,-0.1)--(4.1,-0.1)--(4.1,-0.9);
      \draw[very thick,blue] (5.1,-0.9)--(7.9,-0.9)--(7.9,-0.1)--(5.1,-0.1)--(5.1,-0.9);
      \draw[very thick,red] (0.1,-1.9)--(1.9,-1.9)--(1.9,-1.1)--(0.1,-1.1)--(0.1,-1.9);
      \draw[very thick,red] (2.1,-1.9)--(4.9,-1.9)--(4.9,-1.1)--(2.1,-1.1)--(2.1,-1.9);
      \draw[dashed,very thick,red] (5.1,-1.9)--(5.9,-1.9)--(5.9,-1.1)--(5.1,-1.1)--(5.1,-1.9);
  \end{Tableau} ,$$
which is standard.
\end{example}

\medskip

The following is a direct consequence of Rules (1) and (2) above:

\begin{cor}\label{cor: combinatorial meaning k}
Let $\pi$ be a set partition of $[n]$. Then the shape of $\varphi(\pi)$ is $(n-k,k)$, where
$$k={\rm  desing(\pi)}+\sum\limits_{h\in H(\pi)}\chi(m_h>0).$$
\end{cor}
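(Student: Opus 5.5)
The plan is to count the entries of each row of $\varphi(\pi)$ directly from Rules (0)--(2), and to use that $\{[1,d_1-1],\ {\rm Sing}(\pi),\ {\rm SucDes}(\pi),\ H(\pi)\}$ covers $[n]$. Each element of $[n]$ is placed exactly once, so the shape has first row of length $n-k$ and second row of length $k$, where $k$ is the number of entries placed in the second row. By Claim \ref{indeed standard}, $\varphi(\pi)$ is a standard Young tableau, so the shape is a genuine partition $(n-k,k)$ with $n-k\geq k$. It therefore suffices to identify the second-row entries.

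\textbf{Case ${\rm desing}(\pi)=0$.} Rule (0) gives a one-row tableau, so $k=0$. On the right-hand side, ${\rm desing}(\pi)=0$ and $H(\pi)=\emptyset$. Indeed, with no Desing elements we read $[1,d_1]$ as all of $[n]$ (every element lies before the first descent), which is consistent with Rule (0). Hence the sum over $H(\pi)$ is empty and the formula gives $0$. This case needs the one caveat about how $d_1$ is read when ${\rm Desing}(\pi)=\emptyset$.

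\textbf{Case ${\rm desing}(\pi)>0$, Rule (1).} Rule (1) places into the second row exactly the elements of ${\rm SucDes}(\pi)$. The map $i\mapsto i+1$ is a bijection ${\rm Desing}(\pi)\to{\rm SucDes}(\pi)$, so this contributes exactly ${\rm desing}(\pi)$ entries. I must check that these sets are disjoint from the first-row sets of Rule (1), so that nothing is double counted. Every $i+1$ with $i\in{\rm Desing}(\pi)$ is not a singleton by definition of ${\rm Desing}$. It is also larger than $d_1$, so it does not lie in $[1,d_1-1]$.

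\textbf{Case ${\rm desing}(\pi)>0$, Rule (2).} By Definition \ref{def of h_pi}, $H(\pi)$ is disjoint from $[1,d_1]\cup{\rm Sing}(\pi)\cup{\rm SucDes}(\pi)$. So Rule (2) handles precisely the remaining elements, each exactly once. It puts $h$ in the second row if and only if $m_h>0$, contributing $\sum_{h\in H(\pi)}\chi(m_h>0)$ entries.

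Adding the contributions of Rules (1) and (2) gives the formula for $k$. There is no real obstacle: the only points needing care are the disjointness checks and the boundary convention for $d_1$, and the shape being a valid two-row partition is supplied by Claim \ref{indeed standard}.
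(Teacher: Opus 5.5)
Your proposal is correct and follows the paper's route. The paper simply records the corollary as a direct consequence of Rules (1) and (2). Your count of second-row entries, namely the ${\rm desing}(\pi)$ elements of ${\rm SucDes}(\pi)$ plus the elements $h\in H(\pi)$ with $m_h>0$, together with the disjointness checks and Claim \ref{indeed standard} for $n-k\ge k$, is exactly that argument made explicit.

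On your ${\rm desing}(\pi)=0$ caveat: the choice of convention is harmless. With no descents the first sum in the recursive definition of $m_h$ is empty, so every $m_h\le 0$ and the sum over $H(\pi)$ vanishes however $H(\pi)$ is read.
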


\medskip

\subsection{The pre-image of a given tableau for the case $k>1$}\label{section: preimage desing}\

Next, we describe the pre-image $\varphi^{-1}(\{T\})$  of a given $T\in {\rm SYT}(n-k,k)$, where $k>1$. The case $k=1$ will be treated separately in Section \ref{section case k=1}. The case $k=0$, where the SYT has only one row, was already treated in Claim \ref{claim: partitions without descents}, as by Rule (0), set partitions $\pi$ with ${\rm desing}(\pi)=0$ are mapped by $\varphi$ to standard Young tableaux with one row.

\medskip

As we shall see, the pre-image of $T$ is composed of several subsets.

\subsubsection{The first subset $L_{n-k}$}

\begin{definition}\label{def_Lnk}
Denote the elements in the first row of $T$ by $R_1$
and the elements in its second row by $R_2$.
Let $$L_{n-k}=L_{n-k}(T)=\left\{\pi \in \mathcal{PS}et(n) \mid {\rm Sing}(\pi)=R_1\right\}.$$
\end{definition}

We claim:
\begin{claim}\label{L(n-k) contained in preimage}
    $L_{n-k}\subseteq\varphi^{-1}(\{T\})$.
\end{claim}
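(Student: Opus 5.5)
The plan is to show directly that for $\pi\in L_{n-k}$ every rule of the construction of $\varphi$ places each element in the same row as $T$. Fix $T\in{\rm SYT}(n-k,k)$ with $k>1$, and $\pi$ with ${\rm Sing}(\pi)=R_1$. The non-singletons of $\pi$ are then exactly the elements of $R_2$.

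\textbf{Step 1: the descent sets agree.} By definition $i\in{\rm Desing}(\pi)$ if and only if $i\in R_1$ and $i+1\in R_2$. In a two-row tableau this is exactly the condition $i\in{\rm Des}(T)$, so ${\rm Desing}(\pi)={\rm Des}(T)$ and ${\rm SucDes}(\pi)=\{d+1\mid d\in {\rm Des}(T)\}\subseteq R_2$. Since $k\ge 1$, $R_2\neq\emptyset$. Its minimum $j$ satisfies $j>1$ and $j-1\in R_1$, so $j-1$ is a descent and ${\rm desing}(\pi)>0$. Hence Rule (0) is not used.

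\textbf{Step 2: Rule (1) agrees with $T$.} The minimum $j$ of $R_2$ from Step 1 has $j-1\in{\rm Des}(T)$, and $d_1$ is the smallest descent, so $j-1\ge d_1$. Therefore every element of $[1,d_1]$ lies in $R_1$. Consequently Rule (1) puts $[1,d_1]\cup{\rm Sing}(\pi)=R_1$ in the first row and ${\rm SucDes}(\pi)$ in the second row, both consistently with $T$. It follows that $H(\pi)=R_2\setminus{\rm SucDes}(\pi)$, and it remains to show that Rule (2) sends every $h\in H(\pi)$ to the second row, i.e.\ that $m_h>0$ for all $h\in H(\pi)$.

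\textbf{Step 3 (main step): $m_h>0$ by induction along $H(\pi)$.} Assume $m_g>0$ for all $g\in H(\pi)$ with $g<h$, and let $d_i<h<d_{i+1}$. Then
$$m_h=\sum_{j\le i}t_{d_j}-|H(\pi)\cap[1,h)|.$$
The key point is to identify $\sum_{j\le i}t_{d_j}$ with the number of non-descent elements of $R_1$ below $h$.
\begin{itemize}
\item The truncated run of $d_1$ is all of $[1,d_1-1]$, since all of these elements are singletons.
\item No singleton $s$ satisfies $d_i<s<h$. Otherwise the maximal run of singletons starting at $s$ would end at a descent $d<h$, because $h$ is not a singleton. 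This would give $d_i<d<h<d_{i+1}$, which is impossible.
\item Every other non-descent singleton below $d_i$ belongs to the truncated run of the next descent.
\end{itemize}
Thus $|R_1\cap[1,h]|=i+\sum_{j\le i}t_{d_j}$. On the other hand, $R_2\cap[1,h]$ consists of the $i$ successors $d_1+1,\dots,d_i+1$ together with $H(\pi)\cap[1,h]$, so $|R_2\cap[1,h]|=i+|H(\pi)\cap[1,h]|$.

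Since $T$ is standard, its restriction to $[1,h]$ is also standard, which gives the ballot condition $|R_1\cap[1,h]|\ge|R_2\cap[1,h]|$. Combining this with the two counts yields
$$\sum_{j\le i}t_{d_j}\ \ge\ |H(\pi)\cap[1,h]|=|H(\pi)\cap[1,h)|+1.$$
Substituting into the formula for $m_h$ gives $m_h\ge1$. This completes the induction, so all of $H(\pi)$ lands in the second row and $\varphi(\pi)=T$.

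I expect the bookkeeping in Step 3, namely verifying that the truncated runs account for exactly the non-descent singletons below $h$, to be the only delicate point.
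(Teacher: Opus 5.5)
Your proof is correct and follows essentially the paper's route: a minimal-counterexample (equivalently, strong induction) argument along $H(\pi)$, in which the standardness of $T$ supplies one more first-row entry below $h$ than the second-row entries below $h$ already matched to descents or to earlier elements of $H(\pi)$. Your explicit count $|R_1\cap[1,h]|=i+\sum_{j\le i}t_{d_j}$ rests on two facts: there are no singletons strictly between $d_i$ and $h$, and $[1,d_1-1]$ is the whole truncated run of $d_1$. This makes rigorous the step the paper treats informally via the stack pairing of Remark~\ref{rem-pairing}, namely that an unpaired non-descent singleton below $h$ forces $m_h>0$.
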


\begin{proof}
Let $\pi\in L_{n-k}$ and we have to show that $\varphi(\pi)=T$, i.e. the first row of $\varphi(\pi)$ coincides with ${\rm Sing}(\pi)$. Let $x \in [n]$.

If $x$ appears in the first row of $T$, then by definition $x\in {\rm Sing}(\pi)$, and hence $x$ is in the first row of $\varphi(\pi)$ by Rule (1).

Now assume that $x$ is in the second row of $T$, hence $x\notin {\rm Sing}(\pi)$. Then we have several options:
\begin{enumerate}
\item If $x \in {\rm SucDes}(\pi)$, then by Rule (1), $x$ appears in the second row of $\varphi(\pi)$ as well.

\medskip

\item If $1 \leq x < d_1$, then there exists a minimal number $\ell \geq 1$ such that $x-\ell$ is located in the first row of $T$.
This implies by definition of $\pi\in L_{n-k}$, that $x-\ell\in {\rm Sing}(\pi)$, but $x-\ell+1\notin {\rm Sing}(\pi)$, so that $x-\ell\in {\rm Desing}(\pi)$. Since $x-\ell < x < d_1$, this contradicts the assumption that $d_1$ is the minimal descent of $\pi$, so this case is impossible.

\medskip

\item Otherwise, $x \in H(\pi)$. Assume to the contrary that $x$ is not contained in the second row of $\varphi(\pi)$. Moreover, assume that it is the smallest element of $H(\pi)$ with this property, thus $m_x=0$, but $m_h>0$ for all $h\in H(\pi)$ satisfying $h<x$. According to this assumption, there are $p \geq 0$ elements in $H(\pi)$ smaller than $x$, which are located in the second row of $\varphi(\pi)$ by Rule (2), appearing also in the second row of $T$. The element $x$ is the smallest element of $H(\pi)$ that appears in the second row of $T$, but not in the second row of $\varphi(\pi)$.
Note also that by Rule (1), there are $v>0$ elements from the set ${\rm SucDes}(\pi)$ which are smaller than $x$ located in the second row of both tableaux $T$ and $\varphi(\pi)$.

Assume that $x$ is located in place $(2,i+1)$ of the tableau $T$, where $i=p+v$, thus by the fact that $T$ is standard, we have $x\geq 2i+2$.
Each one of the $i$
elements preceding $x$ in the second row of both $T$ and $\varphi(\pi)$ must have a corresponding element in the first row of $\varphi(\pi)$, since:

\medskip

\begin{itemize}
\item Each element of ${\rm SucDes}(\pi)$ has a corresponding descent element in the first row.

\medskip

\item Each element  $h\in H(\pi)$ satisfying $h<x$, which is located in the second row of $\varphi(\pi)$ has a mate singleton in the first row of $\varphi(\pi)$ due to the location condition (see Remark \ref{rem-pairing}).
\end{itemize}

\medskip

This gives us $i$ elements in the first row of both tableaux $T$ and $\varphi(\pi)$ which are smaller than $x$, and in all, $2i$ elements in both rows of both tableaux $T$ and $\varphi(\pi)$, which
are smaller than $x$: $i$ elements in each row. On the other hand, note that there are $i+1$ elements which are smaller than $x$ in the first row of the tableau $T$ (as the element in place $(1,i+1)$ is also smaller than $x$), so by the construction of  $\pi\in L_{n-k}$ from $T$, there is at least one singleton $\{y\}$ in $\pi$, satisfying $y<x$, which was not counted among the $i$ elements from the first row. This singleton $\{y\}$ is not a descent (as we counted them all in $v$) and is unmatched by any element in the second row of $\varphi(\pi)$, forcing $m_x>0$, a contradiction.

The idea of this subtle proof, can be illustrated by reconsidering Example \ref{example partition to table}(1) with  $x=7$, $i=2$ and $y=4$.
\end{enumerate}

\vspace{-22pt}\end{proof}

\medskip

Note that $|L_{n-k}|={\rm Bell}_{\geq2}(k)$, since there are ${\rm Bell}_{\geq2}(k)$ possibilities to divide $k$ elements (appearing in the second row of $T$) in blocks of size at least $2$.

\subsubsection{Removable elements}

For exploring the other subsets in the pre-image of a given tableau $T$, we introduce the notion of a {\it removable element}:
\begin{definition}
Let $T \in {\rm SYT}(n-k,k)$  and let $\pi\in L_{n-k}(T)$. While scanning the set ${\rm Sing}(\pi) \, \backslash \, {\rm Desing}(\pi)$  increasingly, an element $a \in {\rm Sing}(\pi)\, \backslash \, {\rm Desing}(\pi)$ is called {\em removable} if for every $h\in H(\pi)$ satisfying $h>a$, we have that the removal of the singleton $a$ from ${\rm Sing}(\pi)$, together with all removable elements which are smaller than $a$ if they exist, does not affect the positivity of $m_h$.

Denote by ${\rm Rem}(T)$ the set of all removable elements of $T$.
\end{definition}

Note that after discovering that an element $a$ is not removable, we can actually jump to the first singleton appearing {\it after} the descent following $a$.

Note also that the definition of $
{\rm Rem}(T)$ is independent of the choice of $\pi\in L_{n-k}$. Note also that $
{\rm Rem}(T) \subsetneqq R_1$.

\begin{example}\label{example -removable elements}
Given:
$$T={\begin{ytableau} 1&2&3&6&7&8&12&13 \\ 4&5&9&10&11&14 \end{ytableau}} \in {\rm SYT} (8,6),$$
we choose:

$$\pi=\{\{1\}, \{2\},\{3\}, \{4,5,9,10,11,14\},\{6\},\{ 7\}, \{8\}, \{12\} ,\{ 13\}\}\in L_{14-6}.$$


We have
$${\rm Sing}(\pi)=\{1,2,3,6,7,8,12,13\},\ {\rm Desing}(\pi)=\{3,8,13\},$$
$${\rm SucDes}(\pi)=\{4,9,14\},$$
which implies: $H(\pi)=\{5,10,11\}$.
One can easily compute $m_h$ for $h\in H(\pi)$ (as we have: $t_3=2,\ t_8=2,\ t_{13}=1$):
$$m_5 =2-0=2>0, \ m_{10}=2+2-1=3>0, \ m_{11}=2+2-2=2>0.$$

In order to figure out the removable elements of $T$, we scan increasingly the set $${\rm Sing}(\pi) \, \backslash \, {\rm Desing}(\pi)=\{1,2,6,7,12\}:$$
\begin{itemize}

\item The element $1$ is removable, as after its deletion from the set of singletons, we have (as $t'_3=1$):
$m'_5 =1-0=1>0, \ m'_{10}=1+2-1=2>0,\break m'_{11}=1+2-2=1>0$, so its deletion does not affect the positivity of $m_h$ for every $h$.

\medskip

\item The element $2$ is not removable,  as after its deletion from the set of singletons (together with $1$), we have (as $t_3''=0$):
$m''_5=0-0=0$,
and hence the deletion of $2$ affects the positivity of $m_5$.

\medskip

\item The singleton $6$ is also not removable, since after its deletion from the set of singletons (together with $1$), we have:
$m'''_{11}=1+1-2=0$.

\medskip

\item We jump to the singleton $12$ which is removable, as there is no $h \in H(\pi)$ satisfying $h>12$.
\end{itemize}
Therefore, we have: ${\rm Rem}(T)=\{1,12\}$.
\end{example}

\begin{remark}\label{rem:motivation-removable}
The removable elements have a nice visual interpretation in the stack model (see Remark \ref{rem-pairing}) as the elements that are left inside the stack after locating all the elements of $H(\pi)$. For example, if we apply the stack model for Example \ref{example -removable elements}, we have that the elements $1,12$ are indeed left inside the stack at the end of the process, see Figure \ref{fig:stack example2}.

\begin{figure}[H]
    \centering
\resizebox{0.9\textwidth}{!}{\begin{tikzpicture}[cap=round,line width=2pt]

\foreach \i in
    {1, 2, 3}
  {
    \draw[-] (4*\i-2.1,0) -- (1.2+4*\i-2.1,0);
    \draw[-] (4*\i-2.1,0) -- (0+4*\i-2.1,2.5);
    \draw[-] (4*\i-0.9,0) -- (4*\i-0.9,2.5);
    \node[below] at (0.7+4*\i-3,0){Step \i};
  }

\foreach \i in
    {1, 2, 3, 4}
  {
    \draw[-] (4*\i-3.6,0) -- (1.2+4*\i-3.6,0);
    \draw[-] (4*\i-3.6,0) -- (0+4*\i-3.6,2.5);
    \draw[-] (4*\i-2.4,0) -- (4*\i-2.4,2.5);
  }

\draw[line width=1pt]  (1,0.5) circle (0.3cm);

\node at (1,0.5){1};

\draw[line width=1pt]  (1,1.3) circle (0.3cm);

\node at (1,1.3){2};

\draw[line width=1pt,dashed,->] (0.4,3) .. controls (0.5,2.7) and (0.53,1.5) .. (0.75,0.7);

\draw[line width=1pt,dashed,->] (1.6,3) .. controls (1.2,2.5)  .. (1,1.6);

\draw[line width=1pt]  (2.5,0.5) circle (0.3cm);

\node at (2.5,0.5){1};

\draw[line width=1pt]  (2.5,1.3) circle (0.3cm);

\node at (2.5,1.3){2};

\draw[line width=1pt,dashed,->] (2.5,1.6) .. controls (2.7,2.5)  .. (3.1,3);

\node at (1.7,-1){h=5};

\draw[line width=1pt]  (5,0.5) circle (0.3cm);

\node at (5,0.5){1};

\draw[line width=1pt]  (5,1.3) circle (0.3cm);

\node at (5,1.3){6};

\draw[line width=1pt]  (5,2.1) circle (0.3cm);

\node at (5,2.1){7};

\draw[line width=1pt,dashed,->] (4.4,3) .. controls (4.5,2.7) and (4.53,2.3) .. (4.75,1.5);

\draw[line width=1pt,dashed,->] (5.6,3) .. controls (5.2,2.7)  .. (5,2.4);

\draw[line width=1pt]  (6.5,0.5) circle (0.3cm);

\node at (6.5,0.5){1};

\draw[line width=1pt]  (6.5,1.3) circle (0.3cm);

\node at (6.5,1.3){6};

\draw[line width=1pt]  (6.5,2.1) circle (0.3cm);

\node at (6.5,2.1){7};

\draw[line width=1pt,dashed,->] (6.5,2.4) .. controls (6.7,2.7)  .. (7.1,3);

\node at (5.7,-1){h=10};

\draw[line width=1pt]  (9,0.5) circle (0.3cm);

\node at (9,0.5){1};

\draw[line width=1pt]  (9,1.3) circle (0.3cm);

\node at (9,1.3){6};

\draw[line width=1pt]  (10.5,0.5) circle (0.3cm);

\node at (10.5,0.5){1};

\draw[line width=1pt]  (10.5,1.3) circle (0.3cm);

\node at (10.5,1.3){6};

\draw[line width=1pt,dashed,->] (10.5,1.6) .. controls (10.7,2.5)  .. (11,3);

\node at (9.7,-1){h=11};

\draw[line width=1pt]  (13,0.5) circle (0.3cm);

\node at (13,0.5){1};

\draw[line width=1pt]  (13,1.3) circle (0.3cm);

\node at (12.95,1.3){12};

\draw[line width=1pt,dashed,->] (12.4,3) .. controls (12.5,2.7) and (12.8,2.3) .. (13,1.6);

\node at (13,-0.5){end};

\end{tikzpicture}}
\caption{Applying the stack model for Example \ref{example -removable elements}}\label{fig:stack example2}
\end{figure}

Moreover, see Section \ref{visual perspectives} below for another visual way based on jeu-de-taquin to verify whether an element is removable, using the tableau $T$.
\end{remark}

\subsubsection{The other subsets in the pre-image of $T$}

After identifying the set $L_{n-k}$ as part of the pre-image of $T$, we proceed to figure out the other subsets in its pre-image. We move the minimal element $a$ of ${\rm Rem}(T)$ from the set $R_1$ to the set $R_2$, and denote the resulting sets by $R_1^{(1)}$ and $R_2^{(1)}$, respectively (see e.g. the relocation of the element $3$, which is the minimal element in ${\rm Rem}\left(\raisebox{.5ex}{\begin{smallytableau}
1 & 3 & 4 & 5 & 6 \\ 2 & 7
\end{smallytableau}}\right)= \{3,4,5\}$, from the set partition in Example \ref{example partition to table}(3) to the set partition in Example \ref{example partition to table}(4)).

\medskip

Now denote:
$$L_{n-k-1}=\left\{\pi \in \mathcal{PS}et(n)\mid {\rm Sing}(\pi)=R_1^{(1)}\right\},$$
and we claim:
\begin{claim}\label{L(n-k-1) contained in preimage}
$L_{n-k-1} \subseteq \varphi^{-1}(\{T\})$.
\end{claim}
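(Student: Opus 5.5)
We need to show that every $\pi'$ with ${\rm Sing}(\pi')=R_1^{(1)}=R_1\setminus\{a\}$, where $a=\min{\rm Rem}(T)$, satisfies $\varphi(\pi')=T$. Throughout, $a$ is in the second row of $T$ from $\pi'$'s viewpoint but must end in the first row of $\varphi(\pi')$.

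The plan is to compare $\pi'$ with an auxiliary partition $\pi\in L_{n-k}(T)$. By Claim \ref{L(n-k) contained in preimage}, $\varphi(\pi)=T$, and $\pi$ has the same non-singleton elements as $\pi'$ except for $a$. The first step is to check that removing $a$ from the singletons does not change the descent data. Since $a\in{\rm Sing}(\pi)\setminus{\rm Desing}(\pi)$, we have $a+1\in{\rm Sing}(\pi)$, and $a+1$ stays a singleton in $\pi'$. It follows that ${\rm Desing}(\pi')={\rm Desing}(\pi)$, with one possible exception: if $a-1$ is a singleton, it becomes a new descent.

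I would rule out that exception using minimality of $a$ in ${\rm Rem}(T)$ together with the structure of truncated runs. The alternative is to treat it as a case in which $a-1\in{\rm Desing}(\pi')$ and $a\in{\rm SucDes}(\pi')$, which puts $a$ in the second row, contradicting what we want. So the intended reading must be that $a$ moves to the first row. This forces the claim to be organized so that $a$ lands in the correct row according to the definition of removability.

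The main step is to compare the quantities $m_h$ for $\pi$ and $\pi'$. Removing $a$ decreases by one the truncated run $t_d$ of the descent $d$ following $a$. The element $a$ itself now lies in $H(\pi')$ (it is neither a singleton nor a successor of a descent, and it exceeds $d_1$), or else lies below $d_1$ and is placed in the first row by Rule (1). For $h\in H(\pi)$, removability says exactly that the positivity of each $m_h$ with $h>a$ is unaffected; for $h<a$ nothing changes. The new element $a\in H(\pi')$ should get $m_a=0$, so that it goes to the first row. This is the delicate point. In the stack model of Remark \ref{rem-pairing}, $a$ is an element left in the stack at the end, so at time $a$ the stack holds no earlier unpopped singletons; this should follow from $a$ being the minimal removable element. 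I also need to check that inserting $a$ into $H$ with $\chi(m_a>0)=0$ does not shift the subtracted counts for later $h$.

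Once these points are settled, the row assignments of all elements under $\varphi(\pi')$ coincide with those under $\varphi(\pi)=T$, and the claim follows. I expect the main obstacle to be justifying $m_a=0$ via the stack interpretation of ${\rm Rem}(T)$, together with the boundary case where $a-1$ is a singleton.
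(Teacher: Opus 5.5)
Your plan has the same architecture as the paper's proof: fix $\pi\in L_{n-k}$, note $H(\pi')=H(\pi)\cup\{a\}$, use removability of $a$ for the elements of $H(\pi)$, and show that $a$ itself lands in the first row. The gap is that the step carrying essentially all of the content, $m_a(\pi')=0$, is only announced (``should follow''). The justification you offer does not work as stated. Being left in the stack at the end says nothing by itself about what lies beneath $a$; in Figure~\ref{fig:stack example2} the element $1$ sits under $12$ to the end.

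The missing argument is the paper's contradiction. Grant ${\rm Desing}(\pi')={\rm Desing}(\pi)$ for now (see the next paragraph), and suppose $m_a(\pi')>0$.
\begin{itemize}
\item The history up to $a$ is the same for $\pi$ and $\pi'$. So some singleton $s<a$, from the truncated run of a descent $d<a$, has been pushed and is still unpaired when $a$ is reached.
\item Every $h\in H(\pi)$ with $s<h<a$ satisfies $h\ge d+2$. So $s$ is in the stack at time $h$, and $h$ pops an element above it. Hence $m_h\ge 2$, and deleting $s$ keeps $m_h>0$.
\item For $h\in H(\pi)$ with $h>a$: there are none if $a$ lies in the final run of singletons. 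Otherwise every such $h$ exceeds the descent closing $a$'s run. Deleting $s$ then lowers $\sum_j t_{d_j}$ by one, exactly as deleting $a$ does. Under either deletion the values $\chi(m_g>0)$ for $g<a$ are unchanged. So the two deletions affect all later $m_h$ identically, and by removability of $a$ they affect nothing.
\item Thus $s$ is removable, contradicting the minimality of $a$.
\end{itemize}
In stack language, $s$ lies under $a$ once $a$'s run is pushed, so it is never popped. But converting ``never popped'' into ``removable'' uses Remark~\ref{rem:motivation-removable}, which the paper only asserts; the direct check above avoids it. Once $\chi(m_a>0)=0$ is known, your worry about shifting the subtracted counts for later $h$ disappears.

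The exception $a-1\in{\rm Sing}(\pi)$ must also be excluded by an argument. Your paragraph saying ``the intended reading must be'' that $a$ goes to the first row reasons from the desired conclusion and proves nothing. Here is how to exclude it. Suppose $a-1$ were a singleton. Then $a-1\notin{\rm Desing}(\pi)$, and $a-1$ and $a$ lie in the same run. Deleting either one decrements the same $t_d$ (or no $t_d$, for the final run). Since $a\notin H(\pi)$, the elements of $H(\pi)$ exceeding $a-1$ are exactly those exceeding $a$. There is no removable element below $a-1$, so $a-1$ would be removable, contradicting minimality. Hence the minimal removable element is the first element of its run. It follows that ${\rm Desing}(\pi')={\rm Desing}(\pi)$ and $a\notin{\rm SucDes}(\pi')$. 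It also follows that $a<d_1$ forces $a=1$, which is what makes the paper's ``obvious for $a<d_1$'' correct. The paper passes over this point silently, so you were right to flag it. As written, though, your proposal leaves both key steps as expectations rather than proofs.
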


\begin{proof}
We take set partitions $\pi \in L_{n-k}$ and $\pi' \in L_{n-k-1}$, and we have to show that $\varphi (\pi')=T$.

If $x$ appears in the first row of $T$ and $x \neq a$, then by definition $x\in {\rm Sing}(\pi')$, and hence $x$ is in the first row of $\varphi(\pi')$ by Rule (1).

Now assume that $x$ is in the second row of $T$, hence $x\notin {\rm Sing}(\pi')$. Then we have several options:
\begin{enumerate}
\item If $x \in {\rm SucDes}(\pi')$, then by Rule (1), $x$ appears in the second row of $\varphi(\pi')$ as well.

\medskip

\item The case $1 \leq x < d_1$ is impossible, similar to the proof of Claim \ref{L(n-k) contained in preimage}.

\medskip

\item Otherwise, $x \in H(\pi')$. Note that $H(\pi')=H(\pi) \cup \{a\}$. Since $a$ is a removable element, it does not affect the positivity of $m_h$ for any $h \in H(\pi)$, and thus all other elements of $H(\pi')$, except for $a$, are placed in the same row they occupy in $\varphi(\pi)=T$, and we follow the proof of Claim \ref{L(n-k) contained in preimage}.
\end{enumerate}

\medskip

It remains now to show that $a$ is indeed located in the first row of
$\varphi(\pi')$ as in $T$,
i.e. one has to show that $m_a(\pi')=0$.
It is obvious for $a<d_1$, so we can assume that $a>d_i$ for some $i\geq 1$.

Assume to the contrary that
$m_a(\pi')>0$, thus in $\pi'$ we have:
$$\sum\limits_{j=1}^i t_{d_j} > \sum\limits_{\{g\in H(\pi')\ \mid \ g<a\}}\chi(m_g>0),$$
which implies that there is some singleton $s \in {\rm Sing}(\pi)$, such that $s<a$ and $s$ was not paired to any element $h<a$ of $H(\pi')$ (see Remark \ref{rem-pairing}).

\medskip

Now we will show that $s$ is a removable element of $\pi$, leading to a contradiction to the fact that $a$ was defined to be the minimal element of ${\rm Rem}(T)$. In order to do that, we have to show that the removal of $s$ from ${\rm Sing}(\pi)$ does not affect the positivity of $m_h(\pi)$ for any $h \in H(\pi)$ satisfying $h>s$. We split our treatment into two subsets of $H(\pi)$:  $\ H(\pi) \cap [s+1,a]$ and $H(\pi) \cap [a+1,n]$:

\medskip

\begin{itemize}
\item For the subset $H(\pi) \cap [s+1,a]$, as $s$ was not paired to any element $h \in H(\pi)$ appearing in the second row of $\varphi(\pi)$ with $h<a$, any such element $h$ is still located in the second row even after the removal of $s$.

\medskip

\item For the subset $H(\pi) \cap [a+1,n]$, since $a$ is assumed to be a removable element, all the elements in $H(\pi)$ after $a$ are not affected due to its removal, and similarly will not be affected by the removal of $s$ while keeping the singleton $a$ (as is done in the process of finding the set of removable elements).
\end{itemize}
Hence we get that $s$ is a removable element of $T$, contradicting the minimality of $a$.
\end{proof}

Note also that $|L_{n-k-1}|={\rm Bell}_{\geq 2}(k+1)$, since by definition there are ${\rm Bell}_{\geq 2}(k+1)$ possibilities to divide $k+1$ elements (appearing in $R_2^{(1)}$) into blocks of size at least $2$.

\medskip

Next, move the second-to-minimal element of ${\rm Rem}(T)$ from the set $R_1^{(1)}$ to the set $R_2^{(1)}$, and denote the resulting sets by $R_1^{(2)}$ and $R_2^{(2)}$, respectively.
We denote $$L_{n-k-2}=\left\{\pi \in \mathcal{PS}et(n)\mid {\rm Sing}(\pi)=R_1^{(2)}\right\},$$
and similarly, we have that: $L_{n-k-2} \subseteq \varphi^{-1}(\{T\})$, and $|L_{n-k-2}|={\rm Bell}_{\geq 2}(k+2)$.

\medskip

We continue in this manner for all elements in ${\rm Rem}(T)$. We claim now:
\begin{prop}\label{pre-image greater than 1}
Let $T\in {\rm SYT}(n-k,k)$. Then:
\begin{equation}
\varphi^{-1}(\{T\}) = \bigcup\limits_{i=0}^{|{\rm Rem}(T)|} L_{n-k-i}.\label{equ-pre-image}
\end{equation}
Moreover, $|{\rm Rem}(T)|=n-2k$, and therefore: $|\varphi^{-1}(\{T\})| = \sum\limits_{i=k}^{n-k} {\rm Bell}_{\geq 2}(i).$
\end{prop}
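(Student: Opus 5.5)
We would prove the three assertions separately: the inclusion ``$\supseteq$'' in (\ref{equ-pre-image}), the reverse inclusion ``$\subseteq$'', and the count $|{\rm Rem}(T)|=n-2k$. The final formula then follows at once. The sets $L_{n-k-i}$ are pairwise disjoint, since they prescribe different sets of singletons. Moreover $|L_{n-k-i}|={\rm Bell}_{\geq 2}(k+i)$, because the $k+i$ non-singletons may be grouped arbitrarily into blocks of size at least $2$. Summing over $0\le i\le n-2k$ gives $\sum_{i=k}^{n-k}{\rm Bell}_{\geq 2}(i)$.

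For ``$\supseteq$'' we would induct on $i$, copying the proof of Claim \ref{L(n-k-1) contained in preimage}. Write $a_1<a_2<\cdots$ for the elements of ${\rm Rem}(T)$. Let $\pi^{(i)}\in L_{n-k-i}$ be obtained by turning $a_1,\dots,a_i$ into non-singletons. Then $H(\pi^{(i)})=H(\pi)\cup\{a_1,\dots,a_i\}$. By the definition of removability, which is applied cumulatively to all smaller removable elements, the positivity of $m_h$ is unchanged for every old $h\in H(\pi)$. So every old element keeps its row. It remains to show that $m_{a_j}(\pi^{(i)})=0$ for each $j\le i$. If $m_{a_j}>0$, there would be a singleton $s<a_j$ still in the stack and unpaired at time $a_j$. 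Exactly as in Claim \ref{L(n-k-1) contained in preimage}, such an $s$ would be removable and would differ from $a_1,\dots,a_{j-1}$, contradicting the choice of $a_j$ as the $j$-th smallest removable element.

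For the count we would use the stack model of Remark \ref{rem-pairing} on some $\pi\in L_{n-k}$; by Remark \ref{rem:motivation-removable}, ${\rm Rem}(T)$ is exactly the content of the stack at the end. Every singleton outside ${\rm Desing}(\pi)$ belongs to a truncated run, possibly one before $d_1$ or one after the last descent. Hence it is pushed exactly once, and the number of pushes is $|R_1|-{\rm desing}(\pi)=n-k-{\rm desing}(\pi)$. The second row consists of ${\rm SucDes}(\pi)$ together with the elements $h\in H(\pi)$ with $m_h>0$. Each such $h$ pops exactly once, and no element of $[1,d_1-1]$ lies in $R_2$ by case (2) of Claim \ref{L(n-k) contained in preimage}. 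So the number of pops is $k-{\rm desing}(\pi)$, and what remains in the stack has size $(n-k)-k=n-2k$.

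The main obstacle is ``$\subseteq$''. Let $\varphi(\pi')=T$. Rule (1) puts singletons in the first row and Rule (2) never puts a singleton in the second, so ${\rm Sing}(\pi')\subseteq R_1$ and $R_2\cap{\rm Sing}(\pi')=\emptyset$. Let $X=R_1\setminus{\rm Sing}(\pi')$. Every $x\in X$ is either smaller than $d_1$ or lies in $H(\pi')$ with $m_x=0$, i.e., the stack is empty when $x$ is reached. We must show that $X=\{a_1,\dots,a_{|X|}\}$. The plan is to compare the stack runs for $\pi'$ and for $\pi\in L_{n-k}$. Both tableaux equal $T$, so ${\rm Desing}$ and ${\rm SucDes}$ agree. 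Indeed, an element of $X$ is never a descent, since $d\in{\rm Des}(T)$ has $d+1\in R_2$ and so $d$ must be a singleton in $\pi'$. Hence the two runs differ only in that the elements of $X$ are pushed for $\pi$ but not for $\pi'$, while the pop times agree. Since the stack is empty in $\pi'$ at each $x\in X$, we would show by induction on the elements of $X$ that each one is never popped in the run for $\pi$. So each element of $X$ ends in the final stack, i.e., lies in ${\rm Rem}(T)$. A similar comparison should show that any removable element smaller than $\max X$ must itself lie in $X$; otherwise it would be on the stack when some $x\in X$ is reached, forcing $m_x>0$. This makes $X$ an initial segment of ${\rm Rem}(T)$. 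The LIFO bookkeeping in this comparison is the delicate point, and we would check it carefully against Example \ref{example -removable elements}.
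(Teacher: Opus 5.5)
Your proposal is correct and takes essentially the paper's route. It proves $\supseteq$ by iterating the argument of Claim~\ref{L(n-k-1) contained in preimage}, gets $|{\rm Rem}(T)|=n-2k$ from pushes minus pops (the paper pairs second-row elements of $H(\pi)$ with non-removable singletons), and proves $\subseteq$ by showing $X=R_1\setminus{\rm Sing}(\pi')$ consists of removable elements taken from the left; your observation that the two stack runs have identical pop times, so the elements of $X$ sit at the bottom of the stack for $\pi$, makes explicit what the paper only asserts via Remark~\ref{rem-pairing}. Neither you nor the paper checks that making $a_1,\dots,a_i$ non-singletons creates no new descent, but this holds: if $a_j-1$ is a singleton, it lies directly below $a_j$ in the stack, so it is never popped, hence is removable and belongs to $\{a_1,\dots,a_{j-1}\}$.
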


\begin{proof}
First, we prove Equation (\ref{equ-pre-image}). By Claims \ref{L(n-k) contained in preimage}, \ref{L(n-k-1) contained in preimage} and the discussion afterwards, we obviously have:
$$\bigcup\limits_{i=0}^{|{\rm Rem}(T)|} L_{n-k-i} \subseteq \varphi^{-1}(\{T\}).$$
In order to prove the other direction, note that by Remark \ref{rem-pairing}, for each $\pi' \in
\varphi^{-1}(\{T\})$, one must have: $R_1 \backslash {\rm Rem}(T) \subseteq {\rm Sing}(\pi') \subseteq R_1$, as otherwise $\varphi(\pi')\neq T$.
Hence, it remains to show that the order of removing the elements of ${\rm Rem}(T)$ from the set $R_1$  must be from the left to the right as was done  above; explicitly, for each pair of elements $i,j\in {\rm Rem}(T)$ satisfying $i<j$, one has to show that $i \in {\rm Sing}(\pi')$ implies $j \in {\rm Sing}(\pi')$ (and therefore $\pi' \in L_{n-k-p}$ for some $0\leq p\leq |{\rm Rem} (T)|$). Indeed,  assume to the contrary that $i \in {\rm Sing}(\pi')$ but $j \notin {\rm Sing}(\pi')$. Note that $j>d_1$, otherwise it creates a new descent. Therefore, we have that $j \in H(\pi')$, and by Rule (2), $j$ should be located in the second row of $\varphi(\pi')$ (as the existence of the singleton $\{i\}$ together with the existence of the paired elements in the first row smaller than $j$, in the sense of Remark \ref{rem-pairing}, imply the positivity of $m_j$), so that $\pi' \notin \varphi^{-1}(\{T\})$, a contradiction.

\medskip

Next, we have to prove that $|{\rm Rem}(T)|=n-2k$, or alternatively, that the sum of ${\rm desing}(\pi)$ and the number of non-removable elements is $k$.
Indeed, note that the elements of the second row of $T$ comprise the set ${\rm SucDes}(\pi)$ and the elements of $H(\pi)$ which are placed in the second row during the application of Rule (2), due to the existence of paired elements in ${\rm Sing}(\pi)$.
Every such element of ${\rm Sing}(\pi)$ is by definition non-removable, so that the number of removable elements is equal to the difference $n-2k$ between the two rows of $T$.

Therefore, in this case, the above union is $\bigcup\limits_{j=k}^{n-k} L_{j}$, and hence its size is  $\sum\limits_{j=k}^{n-k} {\rm Bell}_{\geq 2}(j)$.
\end{proof}

\medskip

The following example illustrates the procedure to find the pre-image of a given standard Young tableau for $n=8$ and $k=3$:
\begin{example}\label{example procedure k geq 2}
Let $T={\begin{ytableau} 1&3&4&7&8 \\ 2&5&6 \end{ytableau}} \ .$

In the first step, we have $R_1=\{1,3,4,7,8\}$ and $R_2=\{2,5,6\}$, and so we get the single set partition:
$$\pi_1=1 \mid 256 \mid 3 \mid 4 \mid 7 \mid 8 \in L_{8-3},$$ as there is only ${\rm Bell}_{\geq 2}(3)=1$ way to divide the set $\{2,5,6\}$ into blocks of size at least $2$.

\medskip

Note that in this case we have $H(\pi)=\{6\}$ with $m_6=1>0$,  so that ${\rm Rem}(T)=\{7,8\}$. We first move $7$ from $R_1$ to $R_2$ to get $R_1^{(1)}=\{1,3,4,8\}$ and $R_2^{(1)}=\{2,5,6,7\}$, which contributes the following ${\rm Bell}_{\geq 2}(4)=4$ set partitions in $L_{8-3-1}$:
$$\pi_2= 1 \mid 2567 \mid 3 \mid 4 \mid 8, \qquad \pi_3=1\mid 25 \mid  3 \mid 4 \mid 67 \mid 8,$$
$$\pi_4=1\mid 26 \mid  3 \mid 4 \mid 57 \mid 8, \qquad \pi_5= 1\mid 27 \mid  3 \mid 4 \mid 56 \mid 8.$$
In the last step, we move $8$ from $R_1^{(1)}$ to $R_2^{(1)}$ to get $R_1^{(2)}=\{1,3,4\}$ and\break $R_2^{(2)}=\{2,5,6,7,8\}$, which contributes ${\rm Bell}_{\geq 2}(5)=11$ set partitions $\{\pi_6,\dots,\pi_{16}\}$ in $L_{8-3-2}$, where $\{1\},\{3\},\{4\}$ are singletons and the elements $\{2,5,6,7,8\}$ have to be divided into blocks of size at least $2$.    All in all, the pre-image of $T$ is the set $\{\pi_1,\dots,\pi_{16}\}$ of size: $${\rm Bell}_{\geq2}(3)+{\rm Bell}_{\geq2}(4)+{\rm Bell}_{\geq2}(5)=16.$$
\end{example}

\subsection{The case $k=1$}\label{section case k=1}

Now, we deal with the case $k=1$, i.e. let $T$ be a standard Young tableau of shape $(n-1,1)$. We must have $|{\rm Des}(T)|=|\{d\}|=1$ for some $1 \leq d <n$. We choose a set partition $\pi \in \varphi^{-1}(\{T\})$ with a maximal number of singletons, depending on the value of $d$:
$$\pi =\left\{ \begin{array}{lll}
\vspace{7pt}\{\{1\},\{2,3\}\} \cup \{\{i\}\mid 4\leq i \leq n\}  & \ & d=1 \\
\{\{1,d+1\}\} \cup \{\{i\}\mid 2\leq i \leq n, i \neq d+1\}  & \ & 1<d<n
\end{array} \right. $$
The reason for choosing such a set partition $\pi$ is due to the requirement that a singleton $\{d\}$ is an element of ${\rm Desing}(\pi)$ only if the element $d+1$ {\it is not a singleton}, so we join into one block the element $d+1$ and the minimal possible element, which is $1$ in the case that $d\neq 1$, and $3$  otherwise.

\medskip

Define $L_{n-2}=\{\pi\}$ (in this case, we use the index $n-2$, as there are $n-2$ singletons in $\pi$), and consequently, we form the set $R_1$ to be the singletons of $\pi$, and $R_2=[n]\setminus R_1$, as follows.
$$R_1 ={\rm Sing}(\pi) = \left\{ \begin{array}{ll}
\vspace{5pt}\, [n] \, \backslash \, \{2,3\}  & \ d=1 \\
\, [n] \, \backslash \, \{1,d+1\}  & \ 1<d<n
\end{array} \right.
\mbox{and }
R_2 =\left\{ \begin{array}{ll}
\vspace{5pt} \{2,3\}  & \ d=1 \\
\{1,d+1\}  & \ 1<d<n
\end{array} \right. $$
Hence:
$H(\pi) =\left\{ \begin{array}{lll}
\vspace{5pt}\{3\}  & \ \ \ & d=1 \\
\emptyset  & \ \ \ & 1<d<n
\end{array} \right. $

Note that in the case that $d=1$, we have $m_3=0$. Hence, in both cases, all the elements except for $d$ are removable.
Now we continue in the manner of the proof of the case $k>1$ to get the following result.
\begin{prop}\label{pre-image equals 1}
Let $T\in {\rm SYT}(n-1,1)$ and let $L_i$ be defined as above. Then:
$$\varphi^{-1}(\{T\}) = \bigcup\limits_{i=1}^{n-2} L_i.$$
Therefore: $|\varphi^{-1}(\{T\})| =\sum\limits_{i=2}^{n-1} {\rm Bell}_{\geq 2}(i).$
\end{prop}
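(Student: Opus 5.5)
The plan is to follow the scheme of Proposition~\ref{pre-image greater than 1}, adapted to the special starting partition $\pi$ chosen above. The sets $L_i$ are defined as before. Enumerate ${\rm Rem}(T)=R_1\setminus\{d\}$ increasingly as $a_1<a_2<\dots<a_{n-3}$. Moving $a_1,\dots,a_j$ from $R_1$ to $R_2$ gives $R_1^{(j)}$, of size $n-2-j$, and we set $L_{n-2-j}=\{\pi'\mid {\rm Sing}(\pi')=R_1^{(j)}\}$. Thus $j$ ranges from $0$ to $n-3$, and $i=n-2-j$ ranges from $n-2$ down to $1$. Since $|[n]\setminus R_1^{(j)}|=j+2$, we get $|L_{n-2-j}|={\rm Bell}_{\geq 2}(j+2)$. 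Summing over $j$ gives $\sum_{i=2}^{n-1}{\rm Bell}_{\geq 2}(i)$, which is the stated count once the set equality is established.

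\textbf{Inclusion $\bigcup L_i\subseteq\varphi^{-1}(\{T\})$.} For $\pi'\in L_{n-2-j}$, the element $d$ is still a singleton. Every other singleton lies in $R_1\setminus\{d\}$, and the non-singletons are $R_2\cup\{a_1,\dots,a_j\}$.

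\emph{Case $d>1$.} Here $1\notin{\rm Sing}(\pi')$, so no singleton precedes $d$ except possibly elements between $1$ and $d$. One checks that ${\rm Desing}(\pi')=\{d\}$ provided $d+1\notin{\rm Sing}(\pi')$ and no earlier singleton is followed by a non-singleton. The removed elements are taken in increasing order and form an initial segment of $R_1\setminus\{d\}$, so they include everything in $[2,d-1]$ before anything larger. Hence a singleton in $[2,d-1]$ can never be followed by a non-singleton, and no new descent is created. Every element of $H(\pi')$ then lies after $d$ and has $m_h=t_d-\#\{\ldots\}$. Here $t_d$ counts the singletons immediately before $d$. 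For the elements of $[2,d-1]$ this run shrinks exactly as $a_1,a_2,\dots$ are removed. I then verify that $m_h=0$ for all $h\in H(\pi')$, i.e.\ everything except $d+1$ goes to the first row. If $t_d=0$ this is immediate. If $t_d>0$, the elements of $[2,d-1]$ have not all been removed, so no element after $d$ has been removed, and then $H(\pi')=\emptyset$.

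\emph{Case $d=1$.} The element $1$ is a singleton, $2$ is not, and $t_1=0$. Every $m_h$ equals $0$ minus a nonnegative quantity, so all of $H(\pi')$ is placed in the first row. The requirement ${\rm Desing}(\pi')=\{1\}$ again holds because the removed elements form the initial segment $3,4,\dots$ of $R_1\setminus\{1\}$. Any remaining singletons therefore come after all the removed elements, and none of them is followed by a non-singleton.

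\textbf{Reverse inclusion.} Let $\varphi(\pi')=T$. Then ${\rm Desing}(\pi')={\rm Des}(T)=\{d\}$, so $d$ is a singleton and $d+1$ is not. By Corollary~\ref{cor: combinatorial meaning k} with $k=1$, all $m_h$ vanish. The constraint ${\rm desing}(\pi')=1$ forces the singletons other than $d$ to be arranged so that no singleton $s\neq d$ is followed by a non-singleton. This means ${\rm Sing}(\pi')\setminus\{d\}$ is a terminal segment of $R_1\setminus\{d\}$ in the relevant ordering. Equivalently, the non-singletons other than those of $R_2$ form an initial segment $\{a_1,\dots,a_j\}$, which is exactly the statement $\pi'\in L_{n-2-j}$. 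The same argument also shows that the elements of $R_2$ are never singletons. When $d>1$, $1$ cannot be a singleton, since a singleton $1$ would have to be followed by singletons up to $d$, and then $m_h>0$ for elements after $d$. When $d=1$, $2$ is not a singleton by definition, and $3$ cannot be a singleton, since $2$ would then be alone in its block.

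\textbf{Main obstacle.} The hardest step is this reverse-inclusion bookkeeping in the case $d>1$: showing that any singleton strictly between $1$ and $d$ forces either an extra descent or a positive $m_h$. I would handle it with the stack picture of Remark~\ref{rem-pairing}. Any singleton in the truncated run of $d$ is pushed onto the stack, and it is popped by the first $h\in H$ after $d$, which would put $h$ in the second row. So one of the following must hold: the truncated run is empty, or $H(\pi')=\emptyset$, which forces all of $[d+2,n]$ to be singletons. Both alternatives are exactly the initial-segment condition above.
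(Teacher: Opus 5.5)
Your proposal is correct and follows the paper's scheme: the sets $L_i$ are obtained by moving the removable elements $R_1\setminus\{d\}$ to the non-singleton side in increasing order. Where the paper only says to ``continue in the manner of the case $k>1$'', you make the argument explicit: $\varphi^{-1}(\{T\})$ consists exactly of the partitions with ${\rm Desing}(\pi')=\{d\}$ and either $t_d=0$ or $H(\pi')=\emptyset$.

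A few small corrections are needed. For $d>1$, the terminal-segment property does not follow from ${\rm desing}(\pi')=1$ alone; it follows from that dichotomy, and singletons in $[2,d-1]$ are in fact allowed when $H(\pi')=\emptyset$. A singleton $1$ with $H(\pi')=\emptyset$ must be excluded by a separate reason: $d+1$ would then be the only non-singleton. Finally, for $d=1$ the removed elements are $4,5,\dots$, not $3,4,\dots$.
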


\medskip

We give an example for the case $k=1$:
\begin{example}
Let $T={\begin{ytableau} 1&2&4&5&6 \\ 3 \end{ytableau}} \ .$

In the first step, since $d=2>1$, we take the single set partition $$\pi_1=13 \mid 2 \mid 4 \mid 5 \mid 6.$$
Hence, we have: $R_1=\{2,4,5,6\}$ and $R_2=\{1,3\}$.
Note that in this case we have $H(\pi_1)=\emptyset$, so that ${\rm Rem}(T)=\{1,4,5,6\}$.

We first move the element $4$ from $R_1$ to $R_2$ to get $R_1^{(1)}=\{2,5,6\}$ and $R_2^{(1)}=\{1,3,4\}$, which contributes the ${\rm Bell}_{\geq 2}(3)=1$ set partition
$$\pi_2=134 \mid 2 \mid 5 \mid 6.$$
Next, we move the element $5$ from $R_1^{(1)}$ to $R_2^{(1)}$ to get $R_1^{(2)}=\{2,6\}$ and\break $R_2^{(2)}=\{1,3, 4,5\}$, which contributes the following ${\rm Bell}_{\geq 2}(4)=4$ set partitions:
$$\pi_3=1345 \mid 2 \mid 6, \qquad \pi_4=13 \mid 2 \mid 45 \mid 6,$$
$$\pi_5=14 \mid 2 \mid 35 \mid 6,\qquad \pi_6=15 \mid 2 \mid 34 \mid 6.$$
In the last step, we move the element $6$ from $R_1^{(2)}$ to $R_2^{(2)}$ to get $R_1^{(3)}=\{2\}$ and $R_2^{(3)}=\{1,3,4,5,6\}$, which contributes ${\rm Bell}_{\geq 2}(5)=11$ set partitions $\{\pi_7,\dots,\pi_{17}\}$, where $\{2\}$ is the unique singleton and the elements $\{1,3,4,5,6\}$ are divided into blocks of size at least $2$.    All in all, the pre-image of $T$ is the set $\{\pi_1,\dots,\pi_{17}\}$ of size: $${\rm Bell}_{\geq2}(2)+{\rm Bell}_{\geq2}(3)+{\rm Bell}_{\geq2}(4)+{\rm Bell}_{\geq2}(5)=17.$$
\end{example}

Claims \ref{claim: partitions without descents} and \ref{indeed standard}, together with Propositions
\ref{pre-image greater than 1} and \ref{pre-image equals 1}, imply Theorem \ref{main theorem}.

\begin{remark}\label{connection - Marmor for all partitions}
As mentioned above (see after Example \ref{first example}), the statistic ${\rm Desing}$ is sparse, and based on our analysis in Theorem \ref{main theorem} above, it can be proven that this statistic satisfies the conditions of Theorem 1.8 of Marmor \cite{Marmor}. Hence, our work on this statistic can be considered as an explicit combinatorial application of Marmor's general result, adding to it an explicit bijection.
Note that the coefficients $\left|\{\pi\in \mathcal{PS}et(n) \ : \ {\rm Desing}(\pi)=\{1,3,\dots, 2k-1\}\}\right|$ appearing at the end of the formulation of Marmor's Theorem 1.8 \cite{Marmor}
coincide with our coefficient $e_{n-k,k}$, as the single Young tableau $T$ with two rows having ${\rm Des}(T)=\{1,3,\dots, 2k-1\}$ is:
$$\begin{ytableau}
    1 & 3 & 5 & \cdots &  {\scriptscriptstyle 2k-1} & {\scriptscriptstyle 2k+1} & {\scriptscriptstyle 2k+2} & \cdots &n \\
    2 & 4 & 6 & \cdots &  2k
\end{ytableau}\ , $$
and $|\varphi^{-1}(\{T\})|=e_{n-k,k}$.
\end{remark}

\subsection{Visual perspectives}\label{visual perspectives}

Two issues above can be presented nicely in a visual way: the removable elements and the table of coefficients $e_{n-k,k}$.

\subsubsection{Visual recognition of a removable element}
We present a nice way to
verify whether an element $a$ is removable, given a tableau $T$, where
$T=\varphi(\pi)$ for some set partition $\pi$.
After deleting an element $a$ from the first row of $T$, together with all removable elements to its left if they exist, standardizing the remaining numbers in the tableau and applying the `jeu de taquin' process on the first row of $T$, i.e. rectifying $T$, we should get a valid tableau $T' \in {\rm SYT}(n-k-y-1,k)$ for some $y\geq0$.

The equivalence between this visual way and the definition of a removable element relies on the idea that an element cannot be removable if there is an element in the second row that `needs' that element from the first row, see Remark \ref{rem-pairing}.

\medskip

\begin{example}
We illustrate this visual way by few examples:

\medskip

(1) Given the tableau $T={\begin{smallytableau} 1&2&3&6&7 \\ 4&5 \end{smallytableau}}$, take $\pi= 1\mid 2 \mid 3 \mid 45 \mid 6 \mid 7 \in \varphi^{-1}(\{T\})$, and thus:
$${\rm Sing}(\pi)=\{1,2,3,6,7\}, \ {\rm Desing}(\pi)=\{3\}, \ {\rm SucDes}(\pi)=\{4\}.$$
Hence: $H(\pi)=\{5\}$. Using the stack model, after inserting $1$ and then $2$ as the elements of the truncated run of $3$, we pop out $2$ due to $5$. Afterwards, we insert to the  stack the elements $6,7$ which appear after the last element of ${\rm Desing}(\pi)$. The elements $1,6,7$ are left in the stack, and hence they are  removable.
On the other hand, $2$ is not removable, as it was already popped out.

Alternatively, after the deletion of $1$ from the tableau $T$, we get: $\begin{smallytableau} \none&2&3&6&7 \\ 4&5 \end{smallytableau}$, and then after standardizing and rectifying $T$, we have:
$\begin{smallytableau} 1&2&5&6 \\ 3&4 \end{smallytableau}$, which is a valid SYT.

On the other hand, $2$ is not a removable element, as after the deletion of $2$ (together with $1$) from $T$, we get: $\begin{smallytableau} \none&\none&3&6&7 \\ 4&5 \end{smallytableau}$, and then after standardizing and rectifying $T$, we have:
${\begin{smallytableau} 1&4&5 \\ 2&3 \end{smallytableau}}$, which is not a valid SYT.

Moreover, the elements $6,7$ are both removable elements, as after their deletion, together with the deletion of the previous removable element $1$, we get: $\begin{smallytableau} \none&2&3 \\ 4&5 \end{smallytableau}$, and then after standardizing and rectifying $T$, we obtain the valid SYT \
${\begin{smallytableau} 1&2 \\ 3&4 \end{smallytableau}}$.

\medskip

(2) Given the tableau $T={\begin{smallytableau} 1&2&4&5 \\ 3&6&7 \end{smallytableau}}$, note that $1$ is a removable element,
as after its deletion, we get: $\begin{smallytableau} \none&2&4&5 \\ 3&6&7 \end{smallytableau}$, and then after standardizing and rectifying $T$, we have the valid SYT \
${\begin{smallytableau} 1&3&4 \\ 2&5&6 \end{smallytableau}}$.

Note that on the face of it,
the element $4$ could be wrongly considered as a removable element as its deletion yields a valid SYT, but as we require that we read the elements of the first row of $T$ from minimum to maximum, we first explore that $1$ is a removable element, and then the condition on $4$ to be a removable element does not hold anymore, since it forces also the deletion of $1$ which together with the deletion of $4$ yields a non-valid SYT.

\end{example}

\medskip

\subsubsection{A table presentation of the coefficients $e_{n-k,k}$}\label{visual 2}
The second visual issue is organizing the coefficients $e_{n-k,k}$ in a table and exploring their mutual properties. In Table \ref{tab:s_nk}, we present some numerical values of these coefficients, and we attach to them numbers from the Catalan's triangle, denoted $c_{n-k,k}$, which count the number of SYT of a given shape (see details below).

\begin{table}[H]
\centering
{\tiny$$\begin{array}{|c|c:c:c:c:c:c:c|}
\hline
n & e_{n,0} \ [c_{n,0}] & e_{n-1,1} \ [c_{n-1,1}] & e_{n-2,2} \ [c_{n-2,2}] & e_{n-3,3} \ [c_{n-3,3}] & e_{n-4,4} \ [c_{n-4,4}] &  e_{n-5,5} \ [c_{n-5,5}] & e_{n-6,6} \ [c_{n-6,6}] \\
\hline
\raisebox{-.5ex}{0}  &  \raisebox{-.5ex}{\fbox{\bf 1}\ [1]} &&&&&& \\
1  &  1\ [1]&&&&&& \\
2  &  2\ [1] & \fbox{\bf 0} \ [1] &&&&& \\
3  &  3\ [1] & 1\ [2]&&&&& \\
4  &  7\ [1]  & 2\ [3] & \fbox{\bf 1}\ [2] &&&& \\
5  &  18\ [1] & 6\ [4] & 2\ [5] &&&& \\
6  &  59\ [1] & 17\ [5] & 6\ [9]  & \fbox{\bf 1}\ [5] &&& \\
7  &  221\ [1] & 58\ [6] & 17\ [14] & 5\  [14]&&& \\
8  &  936\ [1]  & 220\ [7] & 58\ [20]  & 16\ [28] & \fbox{\bf 4}\ [14]  && \\
9  &  4361\ [1] & 935\ [8] & 220\ [27] & 57\ [48] & 15\ [42] && \\
10 &  22083\ [1]  & 4360\ [9]  & 935\ [35] & 219\ [75] & 56\ [90] & \fbox{\bf 11}\ [42] &\\
11 &  120336\ [1] & 22082\ [10] & 4360\ [44] & 934\ [110] & 218\ [165] & 52 \ [132] & \\
12 & 700653\ [1] & 120335\ [11] & 22082\ [54] & 4359\ [154] & 933\ [275] & 214\ [297] & \raisebox{.1ex}{\fbox{\bf 41}\ [132]} \\
\hline
\end{array}$$}
\caption{Numerical values of $e_{n-k,k}$ and $c_{n-k,k}$ for small values of $n,k$.
}
\label{tab:s_nk}
\end{table}

The first column of the table is the sequence $\{e_{n,0}\}$ whose $n$th element counts the number of set partitions of up to $n$ elements without singletons (which is the sum $\sum\limits_{i=0}^n {\rm Bell}_{\geq2}(i)$ in Claim \ref{claim: partitions without descents} above).

The (knight-steps) sequence of elements in the table which are squared, is the sequence\break $\{e_{n,n}\}=\{{\rm Bell}_{\geq 2}(n)\}$, which counts the set partitions of $[n]$ without singletons.

By the results above (see e.g. Theorem \ref{main theorem}), there is a nice visual way to compute the coefficient $e_{n-k,k}$  by the elements appearing in the squares, using the table: start from the location of the element $e_{n-k,k}$ that you want to compute and go in two directions: one direction is up-wise and the other is diagonally right and down. In both directions, you reach squared elements. Now, $e_{n-k,k}$ is the sum of all the squared elements between the two squared elements you have reached (including them).

For example, $e_{5,2}=e_{7-2,2}=17$, since if we go in the above-mentioned directions, we find the squared elements $e_{2,2}=1$ and $e_{5,5}=11$, and the sum of the squared elements between them is indeed:
$$e_{2,2}+e_{3,3}+e_{4,4}+e_{5,5}={\rm Bell}_{\geq 2}(2)+{\rm Bell}_{\geq 2}(3)+{\rm Bell}_{\geq 2}(4)+{\rm Bell}_{\geq 2}(5)=1+1+4+11=17.$$
As a consequence, if one looks on the right-down diagonals of this table, the sequence of differences of the these diagonal sequences is exactly the sequence $\{e_{n,n}\}=\{{\rm Bell}_{\geq 2}(n)\}$ whose elements are squared.

As mentioned above, the numbers in brackets are elements from the Catalan's triangle (also called {\it ballot numbers}, OEIS sequence: A009766, see \cite{OEIS}), which satisfy the recursion $c_{n,k}=c_{n-1,k}+c_{n,k-1}$ with the boundary condition $c_{0,0}=1$. They are connected to Catalan numbers via the following equation: $\sum\limits_{k=0}^n c_{n,k} = C(n+1)$, where $C(n+1)$ is the $(n+1)$th Catalan number. Their explicit formula is: $c_{n, k} = \frac{n-k+1}{n+1}{n+k \choose n}$. By the Hook length formula (see e.g. \cite[Corollary 7.21.6]{EC2}), the element $c_{n-k,k}$ from the Catalan's triangle counts the number of standard Young tableaux of shape $(n-k,k)$.

\medskip

Note that Corollary \ref{coro:bell sum} can be read out of the table as a weighted sum of its rows.

\section{A special case: the Schur-positivity of non-crossing partitions with respect to the parameter desing}\label{section: desing for noncrossing partitions}
In this section, we discuss a special case, which deals with the Schur-positivity of the set $\mathcal{NC}(n)$ of the non-crossing partitions in $\mathcal{PS}et(n)$, with respect to the parameter ${\rm desing}$.

\begin{definition}\label{definition Riordan numbers}
Denote by $R_n$ the set of non-crossing set partitions of $[n]$ without singletons and let $r_n=|R_n|$ for each $n$.
Ira Gessel suggested to call the elements of this sequence $\{r_n\}$ {\em Riordan numbers}, which is also the sequence A005043 in the OEIS \cite{OEIS}:
$$1,0,1,1,3, 6, 15, 36, 91, 232, \dots$$
\end{definition}

\medskip

We start with the following observation:
\begin{claim}\label{claim: nc partitions without descents}
The number of non-crossing set partitions $\pi  $ of $[n]$ having ${\rm desing}(\pi)=0$ is $\beta_n:=\sum\limits_{i=0}^n r_i$.
\end{claim}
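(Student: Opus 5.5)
The plan is to mirror the proof of Claim \ref{claim: partitions without descents}, adding a check that non-crossingness interacts correctly with the decomposition. First I characterize partitions $\pi$ with ${\rm desing}(\pi)=0$. By the definition of ${\rm Desing}$, no singleton $i$ may be followed by a non-singleton $i+1$. So once a singleton occurs, every later element is also a singleton. Hence ${\rm Sing}(\pi)=\{i+1,\dots,n\}$ for some $0\le i\le n$, and the elements $1,\dots,i$ are covered by blocks of size at least $2$. Conversely, any such $\pi$ has ${\rm desing}(\pi)=0$, since the only singleton that could be a desing element is the last one, $n$, and $n+1\notin[n]$. This gives a bijection between partitions of $[n]$ with ${\rm desing}(\pi)=0$ and pairs $(i,\sigma)$, where $0\le i\le n$ and $\sigma$ is a partition of $[i]$ without singletons.

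Next I restrict this bijection to non-crossing partitions. The key observation is that the trailing singletons $i+1,\dots,n$ contribute no arcs in the linear graph. Therefore the arcs of $\pi$ are exactly the arcs of $\sigma$, so ${\rm cr}(\pi)={\rm cr}(\sigma)$. In particular, $\pi$ is non-crossing if and only if $\sigma\in R_i$. The same holds for the block-crossing notion of non-crossing: a singleton cannot create a crossing pattern $a<b<c<d$ that needs two elements from each block.

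Summing over $i$ then gives $\sum_{i=0}^n |R_i|=\sum_{i=0}^n r_i=\beta_n$, using the convention $r_0=1$ (the empty partition) and $r_1=0$, as in Definition \ref{definition Riordan numbers}.

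The only step needing care is the characterization of the singleton set as a terminal segment, including the edge cases $i=0$ (all singletons) and $i=n$ (no singletons). I do not expect a real obstacle here. The argument is a direct specialization of Claim \ref{claim: partitions without descents}, with ${\rm Bell}_{\geq 2}$ replaced by the Riordan numbers.
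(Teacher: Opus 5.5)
Your proof is correct and takes the same approach as the paper, which simply says the argument is ``similar to'' that of Claim~\ref{claim: partitions without descents}: the singletons must form a terminal segment $\{i+1,\dots,n\}$, and $[i]$ is split into non-singleton blocks. Your explicit check that trailing singletons cannot create crossings, which is what turns ${\rm Bell}_{\geq 2}(i)$ into $r_i$, spells out the one point the paper leaves implicit.
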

The proof is similar to the proof of Claim \ref{claim: partitions without descents} above.

\medskip
The sequence $\left\{\beta_n\right\}$ appears as sequence
A082395 in the OEIS \cite{OEIS}:
$$1, 1, 2, 3, 6, 12, 27, 63, 154, 386,\dots,$$
which counts the shifted Young tableaux for $\lambda \vdash (n+1)$ of height not greater than $3$ (where the change from $n$ to $n+1$ is due to an offset of values in one place between our sequence $\left\{\beta_n\right\}$ and sequence A082395). Shifted Young tableaux of height not greater than $3$ are Young tableaux having up to $3$ rows, where each row is shifted one place to the right (see Hassani \cite{HassaniThesis}; for the  definition of shifted Young tableaux, see e.g. Sagan \cite{SaganShifted}).

\begin{remark}
It is known that the number of (ordinary) standard Young tableaux of $n$ elements having at most three rows is equal to the number of non-crossing involutions in $S_n$ (see \cite[Corollary 14.4.18]{BonaEnumerCombin}), and are both counted by the Motzkin number $M_n$ (see Section \ref{subsec:motzkin numbers} above).
Eu \cite{Eu} provides a simple recursive bijection between Motzkin paths of length $n$ and
the aforementioned standard Young tableaux.
\end{remark}

In light of this remark, the following question is now natural:
\begin{question}\label{question generalize Eu}
As the number of non-crossing partitions $\pi$ satisfying ${\rm desing}(\pi)=0$ coincides with the number of shifted Young tableaux of $n+1$ elements having at most $3$ rows by Claim \ref{claim: nc partitions without descents} above, can Eu's bijection be generalized to the case of shifted Young tableaux?
\end{question}

\medskip

The following theorem presents the Schur-positivity of the set of non-crossing set partitions of $[n]$ with respect to the parameter ${\rm Desing}$:
\begin{thm}\label{main theorem noncrossing}
Define for $n\geq0$:
$$d_{n-k,k} = \left\{ \begin{array}{ll}
\sum\limits_{i=2}^{n-1} r_i & k=1\\
\sum\limits_{i=k}^{n-k} r_i & (k=0) \mbox{ or } (k>1) \\
\end{array}\right. .$$
Then we have:
\begin{equation}
\sum\limits_{\pi \in {\mathcal NC}(n)}{\mathbf q}^{{\rm Desing}(\pi)}=\sum\limits_{k=0}^{\left\lfloor \frac{n}{2}\right\rfloor} d_{n-k,k}
\left(\sum\limits_{T \in {\rm SYT}(n-k,k)}{\mathbf q}^{{\rm Des}(T)}\right).\end{equation}

We conclude, using Gessel's theorem (Theorem \ref{gessel} above), that
\begin{equation}
    \mathcal{Q}_n(\mathcal{NC}(n))=\sum\limits_{k=0}^{\left\lfloor\frac{n}{2}\right\rfloor} d_{n-k,k} s_{(n-k,k)}.
    \end{equation}
\end{thm}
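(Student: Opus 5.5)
Our plan is to restrict the map $\varphi$ of Section \ref{section: mapping desing} to $\mathcal{NC}(n)$ and rerun the analysis of the preimages, replacing ${\rm Bell}_{\geq 2}$ by the Riordan numbers $r_i$. The key point is that $\varphi(\pi)$ depends only on ${\rm Sing}(\pi)$: Rules (0)--(2), the sets ${\rm Desing}(\pi)$, ${\rm SucDes}(\pi)$, $H(\pi)$, the numbers $t_i$ and $m_h$ are all functions of the singleton set alone. Hence $\varphi|_{\mathcal{NC}(n)}$ still satisfies ${\rm Desing}(\pi)={\rm Des}(\varphi(\pi))$ and lands in two-row standard tableaux by Claim \ref{indeed standard}. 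The quantity $|\varphi^{-1}(T)\cap\mathcal{NC}(n)|$ is then a sum, over the admissible singleton sets $S$, of the number of non-crossing partitions with singleton set exactly $S$.

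First, for $k=0$ we invoke Claim \ref{claim: nc partitions without descents}, which gives $\beta_n=\sum_{i=0}^n r_i=d_{n,0}$. For $k>1$, Proposition \ref{pre-image greater than 1} (its first part, which we reuse verbatim since it only concerns singleton sets) shows that $\varphi^{-1}(T)$ is the disjoint union of $L_{n-k-i}$, $0\le i\le n-2k$. Here $L_{n-k-i}$ consists of all partitions whose singleton set is a prescribed set $R_1^{(i)}$ with $|R_1^{(i)}|=n-k-i$. Intersecting with $\mathcal{NC}(n)$ leaves us to count the non-crossing partitions $\pi$ of $[n]$ with ${\rm Sing}(\pi)=R_1^{(i)}$.

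The needed lemma is that this number equals $r_{k+i}$. Removing the singletons and standardizing gives a bijection onto non-crossing singleton-free partitions of the complement (of size $k+i$), because singletons carry no arcs and cannot create or destroy crossings among the other blocks. Conversely, inserting singletons at prescribed positions into a non-crossing partition keeps it non-crossing. This holds both for the arc-based crossing of Section \ref{ss crossing number} and for the block-crossing definition of $\mathcal{NC}(n)$. Summing over $i$ gives $\sum_{j=k}^{n-k} r_j=d_{n-k,k}$.

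For $k=1$ we use Proposition \ref{pre-image equals 1} in the same way, obtaining $\sum_{i=2}^{n-1} r_i$. Finally, summing ${\mathbf q}^{{\rm Desing}(\pi)}={\mathbf q}^{{\rm Des}(\varphi(\pi))}$ over fibers yields the identity, and Gessel's Theorem \ref{gessel} gives the Schur expansion.

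The main obstacle is making sure that the decomposition of preimages into the blocks $L_j$ genuinely depends only on singleton sets, so that it transfers to $\mathcal{NC}(n)$ without change. We would check this by rereading the proofs of Claims \ref{L(n-k) contained in preimage} and \ref{L(n-k-1) contained in preimage}, which never use the non-singleton block structure. A second point to watch is the $k=1$ base partition: for $d>1$ it is $\{1,d+1\}$ and for $d=1$ it is $\{2,3\}$, and in both cases $r_2=1$ correctly counts these.
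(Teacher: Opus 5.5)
Your proposal is correct and follows the paper's proof, which says only that the argument for Theorem~\ref{main theorem} goes through verbatim with $\mathcal{PS}et(n)$ replaced by $\mathcal{NC}(n)$ and ${\rm Bell}_{\geq 2}(i)$ replaced by $r_i$. You also make explicit the two facts that justify this substitution: first, $\varphi$ and the decomposition $\varphi^{-1}(\{T\})=\bigcup_i L_{n-k-i}$ depend only on ${\rm Sing}(\pi)$; second, deleting singletons bijects the non-crossing partitions with a prescribed singleton set of size $n-j$ onto the singleton-free non-crossing partitions of $j$ points, which are counted by $r_j$.
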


The proof of Theorem \ref{main theorem noncrossing} goes verbatim as the proof of Theorem \ref{main theorem}, where we replace $\mathcal{PS}et(n)$ by $\mathcal{NC}(n)$ and ${\rm Bell}_{\geq2}(i)$ by $r_i$.

\begin{remark}
Similar to Remark \ref{connection - Marmor for all partitions}, also in the non-crossing case, our work on this statistic can be considered as another explicit combinatorial application of Marmor's general result in the non-crossing case, adding to it an explicit bijection.  Likewise, note that the coefficients $\left|\{\pi\in \mathcal{NC}(n) \ : \ {\rm Desing}(\pi)=\{1,3,\dots, 2k-1\}\}\right|$ appearing at the end of the formulation of Marmor's Theorem 1.8 \cite{Marmor}
coincide with our coefficient $d_{n-k,k}$, as the single Young tableau $T$ with two rows having ${\rm Des}(T)=\{1,3,\dots, 2k-1\}$ is:
$$\begin{ytableau}
    1 & 3 & 5 & \cdots &  {\scriptscriptstyle 2k-1} & {\scriptscriptstyle 2k+1} & {\scriptscriptstyle 2k+2} & \cdots &n \\
    2 & 4 & 6 & \cdots &  2k
\end{ytableau}\ , $$
and $|\varphi^{-1}(\{T\})|=d_{n-k,k}$.
\end{remark}

\medskip

\begin{remark}
(a) The sequence $\{d_{n,0}\}$ counts the number of non-crossing set partitions of up to $n$ elements $\pi$ with $desing(\pi)=0$ (see Claim \ref{claim: nc partitions without descents} above). The sequence $\{d_{n,n}\}=\{r_n\}$ counts the non-crossing set partitions of $[n]$ without singletons at all.

\medskip
(b) The total number of non-crossing set partitions of $n$ elements (counted by the $(n+1)$th Catalan number) can be also computed by: $\sum\limits_{k=0}^{\left\lfloor\frac{n}{2}\right\rfloor} d_{n-k,k}c_{n-k,k}$, where the summand $d_{n-k,k}c_{n-k,k}$ counts the number of non-crossing set partitions which correspond by our bijection to standard Young tableaux of shape $(n-k,k)$. We have found no previous mention of this equality in the literature.

\end{remark}


\section{A refinement: the Schur-positivity of set partitions with a given number of blocks}\label{section:refinement}

When refining the analysis of Section \ref{subsection schur positivity with respet to desing} by fixing the number of blocks, the results are very similar, though, interestingly enough, some of the coefficients turn out to be Eulerian numbers.

Recall the definition of  $\mathcal{PS}et(n,b)$ as the set of set partitions of $[n]$ having $b$ blocks from the beginning of Section \ref{section on schur positivity for des sharing block}.
Recall also the definition of the {\it associated
Stirling numbers of the second kind}:
\begin{definition}\label{definition associated Stirling number}
Let $\{S_{\geq 2}(n,b)\}$ be the sequence of {\em associated Stirling numbers of the second kind}  or the {\em Ward numbers} (in a different parametrization, expressed by writing the diagonals of Table \ref{table A008299} as rows, see \cite{Wa} and sequence A134991 in OEIS \cite{OEIS}), which count the number of set partitions of the set $[n]$ into $b$ blocks, each of size at least $2$. This is sequence A008299 of OEIS \cite{OEIS}, and arranged as a triangle, its first rows are presented in Table \ref{table A008299}.

\begin{table}[H]
$${\small\begin{NiceArray}{|w{c}{0.4cm}|c;c;c;c;c|}
\hline
\Gape[0.35cm][0.35cm]{\diagbox{\,\,n}{b\,}} & 1 & 2 & 3 & 4 & 5\\
\hline
2  &  1  &&&& \\
3  &  1  &&&&\\
4  &  1 & 3 &&&\\
5  &  1 & 10 &&&\\
6  &  1 & 25 & 15 &&  \\
7  &  1 & 56 & 105 &&  \\
8  &  1 & 119 & 490 & 105 & \\
9  &  1 & 246 & 1918 & 1260 & \\
10 &  1 & 501 & 6825 & 9450 & 945 \\
\hline
\end{NiceArray}}$$
\caption{Table form of the associated Stirling numbers of the second kind (sequence A008299 in OEIS)}\label{table A008299}
\end{table}
\end{definition}

Note that the sequence of sum of rows of $\left\{S_{\geq 2}(n,b)\right\}$ is the sequence $\left\{{\rm Bell}_{\geq 2}(n)\right\}$ which played a central role in Section \ref{subsection schur positivity with respet to desing}.

\begin{definition}\label{def: e_{n-k,k}^{(b)}}
Define for $n\geq0,0 \leq k\leq \lfloor\frac{n}{2} \rfloor$ and $1 \leq b \leq n$,

$$e^{(b)}_{n-k,k} = \left\{ \begin{array}{ll}
\sum\limits_{t=1}^{n-2} S_{\geq 2}(n-t,b-t) & \ \ k=1\\
\sum\limits_{t=k}^{n-k} S_{\geq 2}(n-t,b-t) & \ \ (k=0) \mbox{ or } (k>1). \\
\end{array}\right.$$
\end{definition}

Then we have:

\begin{thm}\label{main theorem blocks}

\begin{equation}\label{eqn main theorem blocks}
\sum\limits_{\pi \in {\mathcal PS}et(n,b)}{\mathbf q}^{{\rm Desing}(\pi)}=\sum\limits_{k=0}^{\left\lfloor \frac{n}{2}\right\rfloor} e^{(b)}_{n-k,k}
\left(\sum\limits_{T \in {\rm SYT}(n-k,k)}{\mathbf q}^{{\rm Des}(T)}\right).\end{equation}

We conclude, using Gessel's theorem (Theorem \ref{gessel} above), that
\begin{equation}
    \mathcal{Q}_n(\mathcal{PS}et(n,b))=\sum\limits_{k=0}^{\left\lfloor\frac{n}{2}\right\rfloor} e^{(b)}_{n-k,k} s_{(n-k,k)}
    \end{equation}
\end{thm}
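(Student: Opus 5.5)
The plan is to restrict the map $\varphi$ of Section \ref{section: mapping desing} to $\mathcal{PS}et(n,b)$ and recount the fibres block by block. Since $\varphi$ is defined on all of $\mathcal{PS}et(n)$ and satisfies ${\rm Desing}(\pi)={\rm Des}(\varphi(\pi))$, its restriction to $\mathcal{PS}et(n,b)$ is still descent-preserving. So, exactly as in Theorem \ref{main theorem}, it suffices to show that for every $T\in{\rm SYT}(n-k,k)$ the number $|\varphi^{-1}(\{T\})\cap \mathcal{PS}et(n,b)|$ equals $e^{(b)}_{n-k,k}$. This number then depends only on the shape, and the identity (\ref{eqn main theorem blocks}) follows by summing ${\mathbf q}^{{\rm Des}(T)}$ over the tableaux. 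Gessel's theorem then gives the Schur expansion.

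The key point is that the fibre decompositions already obtained are decompositions by the exact set of singletons. For $k>1$, Proposition \ref{pre-image greater than 1} writes $\varphi^{-1}(\{T\})$ as the disjoint union of the sets $L_{n-k-i}$, $0\le i\le n-2k$. Each $L_{n-k-i}$ consists of all partitions whose singleton set is a fixed set $R_1^{(i)}$ of size $t=n-k-i$, with the remaining $n-t$ elements partitioned arbitrarily into blocks of size at least $2$. Such a partition has exactly $b$ blocks if and only if the non-singleton part has $b-t$ blocks, so $|L_{n-k-i}\cap\mathcal{PS}et(n,b)|=S_{\geq 2}(n-t,b-t)$. As $i$ runs from $0$ to $n-2k$, $t$ runs from $n-k$ down to $k$, and summing gives $\sum_{t=k}^{n-k}S_{\geq2}(n-t,b-t)$. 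For $k=1$, Proposition \ref{pre-image equals 1} gives the analogous union with singleton counts $t$ ranging over $1,\dots,n-2$, which yields $\sum_{t=1}^{n-2}S_{\geq2}(n-t,b-t)$. Here one checks from the construction that the tableau's single descent $d$ always remains a singleton. For $k=0$, the argument of Claim \ref{claim: partitions without descents} applies: a partition with ${\rm desing}=0$ and $t$ singletons has them occupying the last $t$ positions, with the first $n-t$ elements forming a singleton-free partition into $b-t$ blocks. This gives $\sum_{t=0}^{n}S_{\geq2}(n-t,b-t)$, which matches the definition of $e^{(b)}_{n,0}$.

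I expect the only delicate step to be the bookkeeping of index ranges and degenerate terms. One must use the conventions $S_{\geq2}(0,0)=1$ and $S_{\geq2}(m,j)=0$ when $j\le0<m$ or $2j>m$, and verify that these reproduce exactly the boundary terms in the definition of $e^{(b)}_{n-k,k}$. In particular, in the $k=1$ case the smallest admissible non-singleton part has size $2$ (at $t=n-2$), and this must be consistent with the convention. Everything else is inherited verbatim from the proofs of Propositions \ref{pre-image greater than 1} and \ref{pre-image equals 1}: those proofs characterise fibre membership purely through ${\rm Sing}(\pi)$, and the block count is determined solely by the unrestricted partition of the complement.
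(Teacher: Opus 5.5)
Your proposal is correct and is essentially the paper's argument: restrict $\varphi$ to $\mathcal{PS}et(n,b)$, use that the fibres from Propositions \ref{pre-image greater than 1} and \ref{pre-image equals 1} (and Claim \ref{claim: partitions without descents} for $k=0$) are disjoint unions of full singleton classes, one for each admissible number $t$ of singletons, and replace ${\rm Bell}_{\geq 2}(n-t)$ by $S_{\geq 2}(n-t,b-t)$. Your vanishing conventions for $S_{\geq 2}$ play the role of the paper's explicit range $\max\{{\rm des}(T),2b-n\}\leq t\leq b-1$; you should also state $S_{\geq 2}(0,j)=0$ for $j<0$, which your listed conventions miss and which is needed for the all-singletons term when $k=0$ and $b<n$.
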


Note that some of the coefficients $S_{\geq 2}(n-t,b-t)$ might be $0$ when $b<t$, as can be seen in Example \ref{example blocks procedure k geq 2} below.

\begin{proof}
We apply the process described in Sections \ref{section: preimage desing} and \ref{section case k=1} above for producing the pre-image of a given tableau $T\in {\rm SYT}(n-k,k)$ with $k>0$, forcing the restriction that the number of blocks of each element of ${\varphi}^{-1}(\{T\})$ is exactly $b$. This causes a two-sided limitation on the number $t$ of possible singletons in each $\pi \in {\varphi}^{-1}(\{T\})$:
\begin{equation}\label{limit on num of singletons}
\max\{{\rm des}(T), 2b-n\} \leq t \leq b-1,
\end{equation}
due to the following reasons:
\begin{itemize}
\item The right inequality is by definition of ${\rm Desing}(\pi)$, as at least one non-singleton block must exist.

\medskip

\item The left inequality is due to two requirements: first, by definition, the number of singletons should be no less than the number of descents, and second, the number of elements in non-singleton blocks $n-t$ should be at least twice the number of remaining blocks $b-t$, as each such block contains at least two elements.
\end{itemize}

From here on, the proof goes almost verbatim as the steps of the proof of Theorem \ref{main theorem} with the following two small technical amendments, forced by the fixed number of blocks $b$:
\begin{enumerate}
\item Replace each appearance of $\mathcal{PS}et(n)$ by $\mathcal{PS}et(n,b)$. Note that this might cause some of the sets $L_{n-k-i}$ to vanish, as can be seen in Example \ref{example blocks procedure k geq 2} below.
\item Replace each appearance of ${\rm Bell}_{\geq2}(n-t)$ by $S_{\geq 2}(n-t,b-t)$, which implies  the replacement of $e_{n-k,k}$ by $e^{(b)}_{n-k,k}$.
\end{enumerate}

\vspace{-25pt}\end{proof}

\begin{remark}
Note that $e_{n,0}^{(b)}$ counts the number of set partitions $\pi$ of $[n]$ having $b$ blocks and satisfying ${\rm desing}(\pi)=0$. It can be easily seen
from the free coefficient in Equation (\ref{eqn main theorem blocks}).
\end{remark}

\begin{example}\label{example blocks procedure k geq 2}
Fix the number of blocks to be $b=4$ and let $$T={\begin{ytableau} 1&3&4&7&8 \\ 2&5&6 \end{ytableau}} \ .$$

In Example \ref{example procedure k geq 2} above, we had $R_1=\{1,3,4,7,8\}$ and $R_2=\{2,5,6\}$, and so we obtained the single set partition:
$\pi_1=1 \mid 256 \mid 3 \mid 4 \mid 7 \mid 8$. However, in our case, there are $S_{\geq 2}(8-5,4-5)=S_{\geq 2}(3,-1)=0$ such partitions, as there are already $5$ singletons, so it is impossible to have a set partition with $4$ blocks.

As before, we have $H(\pi)=\{6\}$, so that ${\rm Rem}(T)=\{7,8\}$.

We first move $7$ from $R_1$ to $R_2$ to get $R_1^{(1)}=\{1,3,4,8\}$ and $R_2^{(1)}=\{2,5,6,7\}$. Also in this case, there are $S_{\geq 2}(8-4,4-4)=S_{\geq 2}(4,0)=0$ such set partitions, as there are still $4$ singletons.

Next, we move $8$ from $R_1^{(1)}$ to $R_2^{(1)}$ to get $R_1^{(2)}=\{1,3,4\}$ and $R_2^{(2)}=\{2,5,6,7,8\}$. In this case, there is  $S_{\geq 2}(8-3,4-3)=S_{\geq 2}(5,1)=1$ such set partition:
$$\pi'_1=1 \mid 25678 \mid 3 \mid 4.$$


Hence, the pre-image of $T$ is the single set partition $\{\pi'_1\}$: $$e_{5,3}^{(4)}=S_{\geq 2}(3,-1)+S_{\geq 2}(4,0)+S_{\geq 2}(5,1)=0+0+1=1.$$
\end{example}

\subsection{Combinatorial perspectives and associated sequences with some open questions}
In Tables \ref{tab:s_nk_blocks} and \ref{tab:s_nk_blocks_1}, we present some numerical values of the coefficients $e^{(b)}_{n-k,k}$.

\begin{table}[H]
{\tiny
$\begin{NiceArray}{|w{c}{0.4cm}|c;cc;ccc;cccc;ccccc|}
\hline
\Block{2-1}{\diagbox{\,\,\,\,n}{b\,}} & 1& 2 && &3 &&& 4 &&& &&5&&   \\
& k=0 & k=0 & k=1 &k=0&k=1&k=2 &k=0&k=1&k=2&k=3&k=0&k=1&k=2&k=3&k=4\\
\hline
2  & \fbox{\bf 1} & 1 &&&&&&&&&&&&& \\
3  & 1 & 1&1 & 1 &&&&&&&&&&& \\
4  & 1 &  \fbox{\bf 4}&1  & 1&1&1 & 1 &&&&&&&& \\
5  & 1 &  11& 1 & 4&4&1 & 1&1&1 && 1 &&&&\\
6  & 1 &  26&1 & \fbox{\bf 26}& 11&1 & 4&4&4&1  & 1&1&1&& \\
7  & 1 &  57& 1 & 131&26&1 & 26&26&11&1 & 4&4&4&4& \\
8  & 1 &  120&1  & 547&57&1 & \fbox{\bf 236}&131&26&1  & 26&26&26&11&1  \\
9  & 1 &  247&1 & 2038&120&1 & 1807&547&57&1 & 236&236&131&26&1 \\
10 & 1 & 502& 1 & 7072&247&1 & 11488&2038&120&1 & \fbox{\bf 2752}&1807&547&57&1 \\
11 & 1 & 1013&1 & 23437&502&1 & 64052&7072&247&1 & 28813&11488&2038&120&1 \\
\hline
\end{NiceArray}$}
\caption{Numerical values of $e^{(b)}_{n-k,k}$ for $2\leq b \leq 5$ and small values of $n,k$.
Note that $k<b$.}
\label{tab:s_nk_blocks}
\end{table}

\begin{table}[H]
{\tiny$\hspace{-40pt}
\begin{NiceArray}{|w{c}{0.4cm}|cccccc;ccccccc;l;l|}
\hline
\Block{2-1}{\diagbox{\,\,\,\,n}{b\,}} &&&6&&&&&&& 7 &&&  & \qquad \qquad 8 & \qquad 9 \\
& k=0 & k=1 & k=2 &k=3&k=4&k=5 &k=0&k=1&k=2&k=3&k=4&k=5&k=6&&\\
\hline
6 & 1 &&&&&&&&&&&& &&\\
7 &1 &1&1 &&&& 1 &&&&&&&&\\
8 &   4&4&4&4&3&&  1&1&1&&&& & 1 & \\
9  & 26&26&26&26&11& & 4&4&4&4&3&& & 1,1,1 & 1\\
10 &  236&236&236&131&26&1 & 26&26&26&26&26&10 && 4,4,4,4,3 & 1,1,1\\
11 & 2752&2752& 1807&547&57&1 & 236& 236&236&236&131&26 && 26,26,26,26,26,25 & 4,4,4,4,3\\\hline
\end{NiceArray}$}

\caption{Numerical values of $e^{(b)}_{n-k,k}$ for $6 \leq b \leq 9$ and small values of $n,k$ (for $n<6$ the values are zero).
}
\label{tab:s_nk_blocks_1}
\end{table}

As the coefficients $e_{n-k,k}^{(b)}$ are refinements of $e_{n-k,k}$, we have:
$e_{n-k,k}=\sum\limits_{b=1}^n e_{n-k,k}^{(b)}$. 

The following claim explains the phenomenon that the second sub-column ($k=1$) of the column $b=3$ is equal to the first sub-column ($k=0$) of the column $b=2$, shifted one place down, and so on:
\begin{claim}\label{claim_columns_enk}
If $k\leq n-b$ or $k> 2(n-b)$, 
then: $e_{n-k,k}^{(b)} = e_{n-k,k+1}^{(b+1)}$.
\end{claim}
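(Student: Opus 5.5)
The plan is to prove Claim \ref{claim_columns_enk} directly from the closed formula in Definition \ref{def: e_{n-k,k}^{(b)}}, by a shift of the summation index, with no bijective input. Note first that the right-hand side $e^{(b+1)}_{n-k,k+1}$ is a coefficient of the shape $(n-k,k+1)$, which is a shape of size $n+1$. So the identity compares the expansion for $\mathcal{PS}et(n,b)$ with the expansion for $\mathcal{PS}et(n+1,b+1)$, and I will read Definition \ref{def: e_{n-k,k}^{(b)}} with $n$ replaced by $n+1$ on the right-hand side. The two facts about the associated Stirling numbers that I will use are: $S_{\geq 2}(m,j)=0$ whenever $j<0$ or $2j>m$; and, for $m\geq 1$, $S_{\geq 2}(m,0)=0$.

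\textbf{Main case $k\geq 2$.} The right-hand side equals
$$\sum_{t=k+1}^{(n+1)-(k+1)} S_{\geq 2}(n+1-t,\,b+1-t).$$
Substituting $t=t'+1$ turns this into
$$\sum_{t'=k}^{n-k-1} S_{\geq 2}(n-t',\,b-t').$$
This is exactly the sum defining $e^{(b)}_{n-k,k}$, except that the last term $t'=n-k$ is missing. Hence
$$e^{(b)}_{n-k,k}-e^{(b+1)}_{n-k,k+1}=S_{\geq 2}\bigl(k,\,b-n+k\bigr).$$
This term vanishes if $b-n+k\le 0$, i.e. $k\le n-b$ (since $k\ge 1$), or if $2(b-n+k)>k$, i.e. $k>2(n-b)$. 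These are precisely the two hypotheses of the claim, which explains where they come from.

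\textbf{Boundary cases.} The step needing care is where the two sides fall under different branches of Definition \ref{def: e_{n-k,k}^{(b)}}.
\begin{itemize}
\item For $k=1$, the right-hand side uses the $k=2$ branch for $n+1$, namely $\sum_{t=2}^{n-1}S_{\geq 2}(n+1-t,b+1-t)$. After the shift this is $\sum_{t'=1}^{n-2}S_{\geq 2}(n-t',b-t')$, which is exactly the $k=1$ branch for $e^{(b)}_{n-1,1}$. So equality holds with no extra term and no hypothesis needed.
\item For $k=0$, the right-hand side uses the special $k=1$ branch for $n+1$, giving $\sum_{t'=0}^{n-2}S_{\geq 2}(n-t',b-t')$. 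Compared with $e^{(b)}_{n,0}$, the two terms $t'=n-1$ and $t'=n$ are missing, namely $S_{\geq 2}(1,b-n+1)$ and $S_{\geq 2}(0,b-n)$. The first is always $0$. The second is nonzero only when $b=n$, under the convention $S_{\geq 2}(0,0)=1$.
\end{itemize}

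The main obstacle is therefore this degenerate corner $k=0$, $b=n$. Combinatorially, $e^{(n)}_{n,0}=1$ (the all-singletons partition), whereas $e^{(n+1)}_{n,1}=0$. I would check this case against Tables \ref{tab:s_nk_blocks}--\ref{tab:s_nk_blocks_1}. If necessary, I would state that the claim is meant for $b<n$ when $k=0$, or adopt the convention that $S_{\geq 2}(0,0)$ contributes $0$ in this boundary term, in which case the claim holds as stated.

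Finally, as a sanity check and a combinatorial explanation, I would note that the identity reflects prepending a new singleton $\{1\}$ to each partition. Under the shift $i\mapsto i+1$, this adds one block and, when the old element $1$ was not a singleton, one descent $1\in{\rm Desing}$. I would use this only as a remark, not as the proof.
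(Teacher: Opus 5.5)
Your main computation is the same as the paper's proof. You shift $t\mapsto t+1$, read $e^{(b+1)}_{n-k,k+1}$ as a coefficient for $n+1$, and isolate the leftover term $S_{\geq 2}(k,b-n+k)$. The paper applies the generic range $t=k,\dots,n-k$ to every case. For the $k=1$ branch this does no harm, since that branch differs from the generic range only by a term $S_{\geq 2}(1,\cdot)=0$. At $k=0$, however, it hides the problem you found.

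That corner is not a matter of convention: it is a genuine counterexample to the claim as stated. For $k=0$ and $b=n\geq 1$ the hypothesis $k\le n-b$ holds, yet $e^{(n)}_{n,0}=1$ (the all-singletons partition) while $e^{(n+1)}_{n,1}=0$. The paper's own Table \ref{tab:s_nk_blocks} shows this: $e^{(2)}_{2,0}=1$, while the entry for $n=3$, $b=3$, $k=1$ is empty. The paper's proof overlooks it because its ``if and only if'' for the vanishing of $S_{\geq 2}(k,b-(n-k))$ fails at $S_{\geq 2}(0,0)=1$.

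So you should drop your second remedy. Declaring $S_{\geq 2}(0,0)=0$ would force $e^{(n)}_{n,0}=0$. That contradicts its combinatorial meaning, Theorem \ref{main theorem blocks}, and the convention ${\rm Bell}_{\geq 2}(0)=1$ used elsewhere in the paper. The correct statement excludes $(k,b)=(0,n)$, for example by requiring $b<n$ when $k=0$; with that amendment your proof is complete.

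Finally, present the prepend-$\{1\}$ remark only as a heuristic. When $1$ is already a singleton of $\pi$, no new descent is created. The paper also leaves a bijective explanation open (Question \ref{question equality columns enk}).
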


\begin{proof}
Compute by Definition \ref{def: e_{n-k,k}^{(b)}}:
\begin{eqnarray*}
e_{n-k,k}^{(b)}&=&\sum\limits_{t=k}^{n-k}S_{\geq 2}(n-t,b-t)=\\
&=&\left[\sum\limits_{t=k}^{n-k-1}S_{\geq 2}(n-t,b-t)\right]+S_{\geq 2}(k,b-(n-k))=\\
&=&
\left[\sum\limits_{t=k+1}^{n-k}S_{\geq 2}(n-(t-1),b-(t-1))\right]+S_{\geq 2}(k,b-(n-k))=
\\
&=&e_{n-k,k+1}^{(b+1)}+S_{\geq 2}(k,b-(n-k)).
\end{eqnarray*}

Hence, we have that $e_{n-k,k}^{(b)}=e_{n-k,k+
1}^{(b+1)}$ if and only if $S_{\geq 2}(k,b-(n-k))=0$, which happens if and only if either $2(b-(n-k))>k$ or $b\leq n-k$. In terms of $k$, the last two restrictions are: $k>2(n-b)$ or $k\leq n-b$, as formulated. (It seems that the condition $k>2(n-b)$ is never fulfilled, but we have no proof.)
\end{proof}

\begin{question}\label{question equality columns enk}
Using the combinatorial meaning of $k$, given in Corollary \ref{cor: combinatorial meaning k} above, can one find a bijection between the set partitions of $[n]$ counted by $e_{n-k,k}^{(b)}$ and the set partitions of $[n]$ counted by $e_{n-k,k+1}^{(b+1)}$?
\end{question}

\subsubsection{The values $e_{n,0}^{(b)}$}

By Claim \ref{claim_columns_enk} above, we have that the first sub-column ($k=0$) for each value of $b$ almost determines all the other sub-columns (for $k>0$). Therefore, let us concentrate only on the sub-column $k=0$ for each $b$, as presented in Table \ref{tab:s_nk0_blocks}.

\begin{table}[H]
{\small
$\begin{NiceArray}{|w{c}{0.4cm}|c;c;c;c;c;c;c;c;c;c;c|}
\hline
\Gape[0.35cm][0.35cm]{\diagbox{\,\,n}{b\,}} &  1 & 2 & 3 & 4&5&6 & 7 &8 &9 &10 &11 \\
\hline
2  & \fbox{\bf 1} & 1 &&&&&&&&& \\
3  & 1 &  1 & 1 &&&&&&&& \\
4  & 1 &  \fbox{\bf 4} & 1  & 1 &&&&&&& \\
5  & 1 &  11& 4&1 &1 &&&&&&\\
6  & 1 &  26& \fbox{\bf 26}& 4& 1&1 &&&&& \\
7  & 1 & 57 & 131 & 26& 4&1&1&&&& \\
8  & 1 & 120 & 547 & \fbox{\bf 236} & 26 & 4 &1&1&&&  \\
9  & 1 & 247& 2038& 1807 & 236& 26 & 4 & 1 &1 && \\
10 & 1 & 502& 7072 & 11488 &  \fbox{\bf 2752}& 236& 26 & 4 & 1 &1& \\
11 & 1 & 1013 & 23437 &  64052 &  28813 &  2752& 236& 26 & 4 & 1 &1\\
\hline
\end{NiceArray}$}

\caption{Numerical values of $e^{(b)}_{n,0}$ for small values of $n$ and $b$.
}
\label{tab:s_nk0_blocks}
\end{table}

The sequence of elements in Table \ref{tab:s_nk0_blocks} which are squared is the sequence\break $\left\{e^{(b)}_{2b,0}\right\}=\{1,4,26,236,2752,\dots\}$ (sequence A000311 in OEIS \cite{OEIS}), which is known to count the {\it total partitions} on $n$ (see Stanley \cite[Example 5.2.5]{EC2}).
An exercise in Comtet \cite[p. 224]{Comtet}
(in the context of counting {\it Schr\"oder systems}) showed that the general element of this sequence is indeed the sum of associated Stirling numbers: $e_{2b,0}^{(b)}=\sum\limits_{t=0}^{b-1} S_{\geq 2} (2b-t,b-t)$ as defined above; see also \cite[Example 7.3]{Schreiber}.

\medskip

Note that in Table \ref{tab:s_nk0_blocks}, for each $n$, every diagonal $\left\{e_{n+i,0}^{(1+i)}\right\}_{i\in \mathbb{N}_0}$ becomes constant after passing its corresponding squared element $e_{2b,0}^{(b)}$.
The next claim proves this phenomenon:

\begin{claim}\label{claim en0b}
For $n \leq 2b$, we have that:
$e_{n+1,0}^{(b+1)}=e_{n,0}^{(b)}$.
\end{claim}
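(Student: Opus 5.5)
The plan is to argue directly from the closed formula in Definition \ref{def: e_{n-k,k}^{(b)}} with $k=0$. There
$$e^{(b)}_{n,0}=\sum_{t=0}^{n} S_{\geq 2}(n-t,\,b-t),$$
and, by the remark following Theorem \ref{main theorem blocks}, this counts partitions of $[n]$ into $b$ blocks with ${\rm desing}=0$, that is, with exactly $t$ singletons placed at the end.

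\textbf{Reindexing.} The key observation is that each summand depends only on the number $s=b-t$ of non-singleton blocks and the number of elements $n-t=s+(n-b)$ in them. So the quantity $n-b$ is the only invariant. I will rewrite
$$e^{(b)}_{n,0}=\sum_{s=b-n}^{b} S_{\geq 2}\bigl(s+(n-b),\,s\bigr),$$
using the conventions, already used in Example \ref{example blocks procedure k geq 2}, that $S_{\geq 2}(m,s)=0$ for $s<0$ and $S_{\geq 2}(m,0)=\delta_{m,0}$.

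\textbf{Comparing the two sums.} Applying the same rewriting to $(n+1,b+1)$ keeps the difference $(n+1)-(b+1)=n-b$ unchanged. The resulting sum runs over $s$ from $b-n$ to $b+1$. It therefore equals $e^{(b)}_{n,0}$ plus one extra term, namely $s=b+1$, i.e. $t=0$:
$$S_{\geq 2}(n+1,\,b+1).$$

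\textbf{Vanishing of the extra term.} The extra term counts partitions of $[n+1]$ into $b+1$ blocks, each of size at least $2$. Such a partition exists only if $2(b+1)\le n+1$, i.e. $n\ge 2b+1$. Under the hypothesis $n\le 2b$ the term is therefore $0$, and the claim follows.

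\textbf{Bijective version.} Combinatorially, this is the map that removes the last singleton from a desing-free partition of $[n+1]$ into $b+1$ blocks. It is a bijection onto desing-free partitions of $[n]$ into $b$ blocks. The only partitions in the larger set that have no singleton to remove are those with no singletons at all, and these are exactly the ones counted by $S_{\geq 2}(n+1,b+1)$.

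\textbf{Main obstacle.} The only delicate point is bookkeeping at the summation boundaries. I need to confirm that the end terms $t=n$ and $t=n+1$ both reduce to $S_{\geq 2}(0,b-n)$ and hence match. I also need to confirm that negative or zero second arguments contribute consistently in both sums. Once these conventions are checked, the identity is a pure reindexing.
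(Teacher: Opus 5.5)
Your proof is correct, but your main argument takes a different route from the paper's.

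The paper argues only bijectively. It reads $e^{(b)}_{n,0}$ as the number of partitions of $[n]$ into $b$ blocks whose singletons form a final segment. It then notes that $\pi\mapsto\pi\cup\{\{n+1\}\}$ is a bijection onto the analogous set for $(n+1,b+1)$, because for $n\le 2b$ every such partition of $[n+1]$ must contain the singleton $\{n+1\}$. That is exactly your closing ``bijective version'' run in reverse, so the combinatorial core coincides.

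Your main line differs. The substitution $s=b-t$ in $e^{(b)}_{n,0}=\sum_{t=0}^{n}S_{\geq 2}(n-t,b-t)$ never uses the combinatorial meaning of $e^{(b)}_{n,0}$. It gives the unconditional identity
\[
e^{(b+1)}_{n+1,0}=e^{(b)}_{n,0}+S_{\geq 2}(n+1,b+1).
\]
This buys two things the paper's argument does not state:
\begin{itemize}
\item The hypothesis $n\le 2b$ is also necessary, since the correction term is positive once $n\ge 2b+1$.
\item It gives the exact jumps along the diagonals of Table~\ref{tab:s_nk0_blocks} before they stabilize, e.g.\ $26-11=15=S_{\geq 2}(6,3)$.
\end{itemize}

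The paper's version is shorter but leaves implicit why $\{n+1\}$ must be a singleton. Your vanishing argument supplies that step: $2(b+1)>n+1$ rules out a singleton-free partition, and ${\rm desing}=0$ forces the singletons to be a suffix.

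The boundary bookkeeping you flag as the main obstacle is not delicate. After reindexing, both sums start at $s=b-n$. The only convention really needed is $S_{\geq 2}(0,0)=1$, which the paper uses implicitly anyway (e.g.\ $e^{(3)}_{3,0}=1$).
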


\begin{proof}
Recall that $e_{n,0}^{(b)}$ counts the set partitions $\pi$ of $[n]$ having $b$ blocks with\break ${\rm desing}(\pi)=0$, which means the singletons appear only at the end of $\pi$. The mapping that sends $\pi$ (counted by  $e_{n,0}^{(b)}$) to $\pi \cup \{\{n+1\}\}$ (counted by $e_{n+1,0}^{(b+1)}$) is a bijection when $n\leq 2b$, since each partition counted by  $e_{n+1,0}^{(b+1)}$ (for $n\leq 2b$) must contain the singleton $\{n+1\}$.
\end{proof}

Note that this also explains the phenomenon appearing in Tables \ref{tab:s_nk_blocks} and \ref{tab:s_nk_blocks_1}, that numbers from the sequence $\{1,4,26,236,2752,\dots\}$ (sequence A000311 in OEIS \cite{OEIS}) appear more frequently as values of $e_{n-k,k}^{(b)}$.

\subsubsection{The sequence $\left\{e_{n,0}^{(2)}\right\}$ and its connection to the sequence of  Eulerian numbers}

The second column of Table \ref{tab:s_nk0_blocks} forms the sequence $$\left\{e^{(2)}_{n,0}\right\}=\{1,1,4,11,26,57,120,247, \dots\},$$ whose $n$'th element counts the number of set partitions $\pi$ of $[n]$ having two blocks with ${\rm desing}(\pi)=0$. It is easy to observe that
\begin{equation}\label{Eqn: en02}
e_{n,0}^{(2)}=S_{\geq 2}(n,2)+1, \end{equation}
as the condition ${\rm desing}(\pi)=0$ with the requirement that $\pi$ has two blocks allows only the existence of the singleton $\{n\}$ (see the definition of $S_{\geq 2}(n,2)$ and the second column of Table \ref{table A008299}).
Note also that $S_{\geq 2}(n,2)=2^{n-1}-1-n$, as we have to choose the elements of one of the two non-empty blocks, for which we have $2^{n-1}-1$ possibilities by symmetry, and then we have to throw out the $n$ cases where one of the blocks is a singleton.
Therefore, we have: $e_{n,0}^{(2)}=2^{n-1}-n$.

On the other hand, note that the number $2^{n-1}-n$ is also the celebrated Eulerian number $A(n-1,1)$, which counts permutations of $n-1$ elements having exactly one descent in their one-line notation (see sequence A000295 in OEIS \cite{OEIS}).
Indeed, a permutation of $[n-1]$ having exactly one descent can be formed by choosing a non-empty subset $B \subsetneqq [n-1]$, which is not of the form $\{1,2,\dots,i\}$ for all $1\leq i\leq n-1$, and concatenating to its elements, the elements of its complement in $[n-1]$, where both subsets are ordered increasingly (see $\sigma_i$ in Example \ref{example-eulerian} below).
We have in summary that $$e_{n,0}^{(2)}=A(n-1,1).$$

\medskip

This equality deserves a combinatorial proof, which we supply below,  based on a simple direct bijection between the following two sets, counted by $e_{n,0}^{(2)}$ and $A(n-1,1)$, respectively:
\begin{itemize}
\item Let $\mathcal{A}(n-1,1)$ be the set of all permutations on $n-1$ elements having one descent in their one-line notation, i.e. each element of  $\mathcal{A}(n-1,1)$  is of the form\break $\sigma=a_1\cdots a_{n-1}$, where for some $1\leq i\leq n-2$,
$A_1=\{a_1,\dots ,a_i\}$ and\break $A_2=\{a_{i+1},\dots ,a_{n-1}\}$
are the two runs (consecutive increasing sequences) of $\sigma$.

\medskip

\item Let $\mathcal{B}_n$ be the partitions in ${\mathcal{PS}}et(n,2)$ having ${\rm desing(\pi)}=0$.
\end{itemize}

\begin{claim}[A combinatorial proof for $e_{n,0}^{(2)}=A(n-1,1)$]\label{combinatorial claim for A(n-1,1)}\ \\
Define the following mapping $\varphi:\mathcal{A}(n-1,1) \rightarrow \mathcal{B}_n$ by:
$$\varphi(\sigma) = \left\{ \begin{array}{ll}
\{\{1,\dots, i\},\ \{i+1, \dots, n\}\}& \ \ A_1=\{i\} \mbox{ for some } 2\leq i \leq n-1\medskip\\
\{A_1,\ A_2\cup\{n\}\}& \ \ \mbox{otherwise.} \\
\end{array} \right.$$

Then, $\varphi$ is a bijection.    \end{claim}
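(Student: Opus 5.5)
The plan is to check three things in turn: that $\varphi$ lands in $\mathcal{B}_n$, that it is injective, and that it is surjective, via an explicit inverse. Since $|\mathcal{A}(n-1,1)|=2^{n-1}-n=e^{(2)}_{n,0}=|\mathcal{B}_n|$ was shown just before the claim, injectivity together with well-definedness would already suffice. Still, I will describe the inverse, because it makes the case split transparent.

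First, a structural description of $\mathcal{B}_n$. A two-block partition $\pi$ with ${\rm desing}(\pi)=0$ has all its singletons at the end. Since there are only two blocks, this leaves three types. In type (a) there are no singletons, so both blocks have size at least $2$. In type (b) the block $\{n\}$ is a singleton and the other block is $[n-1]$, which has size at least $2$ because $n\ge 3$. In type (c) both blocks are singletons, which forces $n=2$; I will treat this degenerate case separately or exclude it.

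Well-definedness comes next. In the first branch, $A_1=\{i\}$ with $i\ge 2$, and the image $\{[1,i],[i+1,n]\}$ has both blocks of size at least $2$ provided $i\le n-2$. If $i=n-1$ it equals $\{[n-1],\{n\}\}$, which is type (b) and still has ${\rm desing}=0$. In the ``otherwise'' branch, $A_2\cup\{n\}$ has size at least $2$. For $A_1$ I need $|A_1|\ge 2$. If $|A_1|=1$ with $A_1=\{a\}$, the descent $a>a_2$ forces $a\ge 2$, so that case is already in the first branch. Hence the second branch has $|A_1|\ge2$ and produces a type (a) partition. One wrinkle must be checked: $A_1=\{1,\dots,j\}$ cannot occur for a genuine descent. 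So in the second branch the block $A_1$ is never an initial segment containing $1$ whose complement also starts the run $A_2$ in order.

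For injectivity, and for the inverse, let $\pi=\{B,B'\}\in\mathcal{B}_n$ with $n\in B'$. If $B'\setminus\{n\}$ is nonempty and $B$ is not an initial segment $[1,j]$ ending where $B'\setminus\{n\}$ begins (that is, if listing $B$ increasingly and then $B'\setminus\{n\}$ increasingly creates a descent), then I set $\sigma$ to be that concatenation and take the second branch. The remaining partitions are exactly those with $B=[1,i]$ and $B'=[i+1,n]$ for some $i$, which pair with the first-branch permutations $i,1,2,\dots,\widehat{i},\dots,n-1$. The main obstacle is the bookkeeping at this boundary. I must show that the initial-segment partitions $\{[1,i],[i+1,n]\}$, which cannot arise from the second branch because that concatenation has no descent, correspond exactly to $2\le i\le n-1$, and that the second-branch images avoid them. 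I expect to verify this by noting that a second-branch image has $A_1$ with a descent into $A_2$, so $A_1\neq[1,|A_1|]$. Finally, I will match cardinalities as a consistency check.
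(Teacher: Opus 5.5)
Your proposal is correct and follows essentially the same route as the paper: you verify that $\varphi(\sigma)$ has no singleton block other than possibly $\{n\}$, and you invert by splitting on whether the block not containing $n$ is an initial segment $[1,i]$ (which pulls back to the permutation with first run $\{i\}$) or not (which pulls back to the concatenation of the two blocks, each in increasing order). Your version is slightly more careful than the paper's in two places: you concatenate $B$ with $B'\setminus\{n\}$ rather than with $B'$ itself, and you isolate the degenerate case $n=2$, where $\mathcal{A}(1,1)=\emptyset$ but $|\mathcal{B}_2|=1$, so the claim genuinely requires $n\ge 3$.
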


\begin{proof}
Note that $\varphi(\sigma) \in \mathcal{B}_n$, as the only singleton which can appear is $\{n\}$, implying ${\rm desing}(\varphi(\sigma))=0$. Indeed:
\begin{itemize}
\item The first part of $\varphi(\sigma)$ is never a singleton as $i\geq 2$ in the first case and $A_1$ is not a singleton in the second case.

\medskip

\item The second part of $\varphi(\sigma)$ is not a singleton in the second case as $A_2\neq\emptyset$, and it can be a singleton in the first case only when $i=n-1$, in which case we get the singleton $\{n\}$.
\end{itemize}

Now, it is easy to recover $\sigma \in \mathcal{A}(n-1,1)$, such that  $\varphi(\sigma)=\pi =\{B_1,B_2\} \in \mathcal{B}_n$ (assuming $n \in B_2$) as follows:
$$\varphi^{-1}(\{\{B_1,B_2\}\}) = \left\{ \begin{array}{ll}
i 1 2 \cdots (i-1)(i+1) \cdots (n-1) & \ \ B_1=\{1,\dots, i\} \mbox{ for some } 2\leq i \leq n-1\medskip\\
C_1C_2& \ \ \mbox{otherwise,} \\
\end{array} \right.$$
where $C_i$ are the elements of the set $B_i$ ordered increasingly.
\end{proof}

\begin{example}\label{example-eulerian}
Let $\sigma_1=3471256\in A(7,1)$. Then we have: $$A_1=\{3,4,7\}, A_2=\{1,2,5,6\},$$
so that $\varphi(\sigma_1)=347 \mid 12568 \in \mathcal{B}_8$, by the second case.

As another example, if $\sigma_2=7123456\in \mathcal{A}(7,1)$, then we have:
$$A_1=\{7\}, A_2=\{1,2,3,4,5,6\},$$
so that: $\varphi(\sigma_2)=1234567 \mid 8 \in \mathcal{B}_8$, by the first case.
\end{example}

\subsubsection{The connection between the values $\left\{e_{n,0}^{(b)}\right\}$ and the sequence A124324}

The next result connects between the values $\left\{e_{n,0}^{(b)}\right\}$ (see Table \ref{tab:s_nk0_blocks}) and the values of sequence A124324 in OEIS \cite{OEIS},  which count the number of set partitions of $[n]$ having $m$ blocks of size greater than $1$ (and possibly some additional singletons). The first values of sequence
A124324 are presented in table form in Table \ref{tab:A124324}, and we denote the value in the entry $(n,m)$ by $T(n,m)$.

\begin{table}[H]
{\small
$\begin{NiceArray}{|w{c}{0.4cm}|c;c;c;c;c;c|}
\hline
\Gape[0.35cm][0.35cm]{\diagbox{\,\,n}{m\,}} &  0 &1 & 2 & 3 & 4&5 \\
\hline
2 &  1 &    1 & &&& \\
3 &  1 &    4 & &&& \\
4 &  1 &   11 &     3 &&& \\
5 &  1 &   26 &    25 &&& \\
6 &  1 &   57 &   130 &    15 && \\
7 &  1 &  120 &   546 &   210 && \\
8 &  1 &  247 &  2037 &  1750 &   105 & \\
9 &  1 &  502 &  7071 & 11368 &  2205 & \\
10 & 1 & 1013 & 23436 & 63805 & 26775 & 945\\
\hline
\end{NiceArray}$}
\caption{Table form of sequence A124324 in OEIS; its entries are denoted by $T(n,m)$.}
\label{tab:A124324}
\end{table}

The connection between the values $\left\{e_{n,0}^{(b)}\right\}$ and the values $T(n,m)$ is given in the following proposition:

\begin{prop}\label{prop e to T}
For each $n$ and $b$, we have:
$$e_{n,0}^{(b)}=\left\{
\begin{array}{ll}
\sum\limits_{i=0}^{\left\lfloor\frac{b-1}{2}\right\rfloor} T(n-1-2i,b-1-2i) & \ \ n \neq 2b \medskip \\
\sum\limits_{i=0}^{\left\lfloor\frac{b}{2}\right\rfloor} T(n-2i,b-2i) & \ \  n=2b \\
\end{array}\right. .$$
\end{prop}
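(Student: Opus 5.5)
The plan is to prove the formula by a telescoping recursion in the pair $(n,b)$, which lowers both $n$ and $b$ by $2$ at each step. I will use two descriptions.

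First, by the proof of Claim \ref{claim en0b} and Definition \ref{def: e_{n-k,k}^{(b)}} with $k=0$, $e_{n,0}^{(b)}$ counts set partitions of $[n]$ with exactly $b$ blocks whose singletons form a terminal segment $\{n-t+1,\dots,n\}$:
$$e_{n,0}^{(b)}=\sum_{t\ge 0}S_{\geq 2}(n-t,b-t).$$

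Second, $T(n,m)$ counts partitions of $[n]$ with $m$ non-singleton blocks and an arbitrary set of singletons:
$$T(n,m)=\sum_{j\ge 0}\binom{n}{j}S_{\geq 2}(n-j,m).$$

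The core step is the recursion
$$e_{n,0}^{(b)}=T(n-1,b-1)+e_{n-2,0}^{(b-2)}\qquad (n\neq 2b).$$
Iterating it gives the sum $\sum_i T(n-1-2i,b-1-2i)$. The iteration stops when $b-2i$ drops to $1$ or $0$. At that point the remaining term $e^{(1)}_{\cdot,0}=1$ or $e^{(0)}_{\cdot,0}\in\{0,1\}$ must be absorbed into, or shown to equal, the last summand $T(\cdot,0)=1$. I will check these base cases by hand against Table \ref{tab:s_nk0_blocks}, separately for $b$ odd and $b$ even.

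For the case $n=2b$, Claim \ref{claim en0b} shows the diagonal is constant beyond $n=2b$, so $e_{2b,0}^{(b)}=e_{2b+1,0}^{(b+1)}$. Note that $2b+1\neq 2(b+1)$, so the $n\neq 2b$ formula applies to the right-hand side. This gives $\sum_{i}T(2b-2i,b-2i)$ with $i$ ranging over $0,\dots,\lfloor b/2\rfloor$, which is exactly the second line of the statement. Thus the $n=2b$ case reduces to the first case together with a check of the range of $i$.

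To prove the recursion, I will first try a direct bijection. Partitions counted by $T(n-1,b-1)$ have $b-1$ non-singleton blocks on $[n-1]$ and singletons placed anywhere. Partitions counted by $e_{n,0}^{(b)}$ have $b$ blocks on $[n]$ with singletons only at the end. The natural map takes the set $J$ of singletons of a partition $\sigma\in T(n-1,b-1)$ and merges $J\cup\{n\}$ into a single new block. This produces $b$ blocks on $[n]$ with no singletons except possibly $\{n\}$ when $J=\emptyset$. However, it only hits partitions with at most one trailing singleton, and it fails to be injective only in how one recognizes which block was "the new one". I therefore plan to designate the block containing $n$ as the new block. Partitions counted by $e_{n,0}^{(b)}$ whose block of $n$ is a singleton or a genuine block then split into the following classes. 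Those with exactly $t\le 1$ trailing singletons, read off via the block of $n$, correspond to $T(n-1,b-1)$. Those with $t\ge 2$ trailing singletons must be matched with $e_{n-2,0}^{(b-2)}$ by deleting the two singletons $\{n-1\},\{n\}$.

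The main obstacle is making this decomposition exact. It requires that deleting the block of $n$ in an $e$-partition leaves an arbitrary partition of $[n-1]\setminus B$ with $b-1$ blocks and restricted singletons, and this does not immediately match $T(n-1,b-1)$, whose singletons are free. If the bijection does not close up, I will fall back to an algebraic verification. I will expand both sides using the two sums above and reduce the identity to
$$\sum_{t\ge 0}\bigl[S_{\geq 2}(n-t,b-t)-S_{\geq 2}(n-2-t,b-2-t)\bigr]=\sum_{j}\binom{n-1}{j}S_{\geq 2}(n-1-j,b-1),$$
and then prove it by induction on $n$ using the recurrence $S_{\geq 2}(n+1,k)=kS_{\geq 2}(n,k)+nS_{\geq 2}(n-1,k-1)$. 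Before attempting this, I will confirm it on small tables: for example, $(n,b)=(7,3)$ gives $131=130+1$, and $(5,2)$ gives $11=11+0$.
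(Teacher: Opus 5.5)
Your recursion $e^{(b)}_{n,0}=T(n-1,b-1)+e^{(b-2)}_{n-2,0}$ is the paper's argument in another form: it is the paper's pairing of the terms $t=2i,2i+1$ in $\sum_t S_{\geq2}(n-t,b-t)$. Your treatment of $n=2b$ is a genuinely tidier route. Claim~\ref{claim en0b} gives $e^{(b)}_{2b,0}=e^{(b+1)}_{2b+1,0}$, and the first line at $(2b+1,b+1)$ is then exactly the second line. This avoids the paper's separate computation with $T(2b,b)=S_{\geq2}(2b,b)$. (The recursion also holds at $n=2b$, so that restriction is unnecessary.) There are, however, two real problems.

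\textbf{The central identity is left unproved.} The two sums differ only by an index shift of $2$, so $e^{(b)}_{n,0}-e^{(b-2)}_{n-2,0}=S_{\geq2}(n,b)+S_{\geq2}(n-1,b-1)$ at once; the left side of your fallback identity telescopes to this. What you need is therefore $T(n-1,b-1)=S_{\geq2}(n,b)+S_{\geq2}(n-1,b-1)$, which is the paper's Lemma~\ref{lemma T to S}(1).

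The bijection you wrote down already proves it, and the obstacle you raise is not real. The inverse map should not \emph{delete} the block $B\ni n$; it should \emph{dissolve} it, removing $n$ and turning the elements of $B\setminus\{n\}$ into singletons. In a partition counted by $e^{(b)}_{n,0}$ with $t\le 1$, the other $b-1$ blocks contain no singletons at all. So the result is a partition of $[n-1]$ with $b-1$ non-singleton blocks and singleton set $B\setminus\{n\}$, an arbitrary subset that is empty exactly when $t=1$. Hence the $t\in\{0,1\}$ classes are in bijection with all objects counted by $T(n-1,b-1)$. The $t\ge2$ class maps to $e^{(b-2)}_{n-2,0}$ by deleting $\{n-1\},\{n\}$. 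No induction is needed.

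\textbf{The even-$b$ base case cannot be closed as you describe.} Iterating down to $b-2r=0$ leaves the term $e^{(0)}_{n-b,0}=S_{\geq2}(n-b,0)$, which is $1$ if $n=b$ and $0$ otherwise. For even $b$ the target sum ends at $T(n-b+1,1)$, so there is no summand $T(\cdot,0)$ to absorb it. Your argument therefore proves the first line exactly when $n\neq b$.

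For $n=b$ even the statement is in fact false. For example, $e^{(2)}_{2,0}=1$ (the all-singleton partition, cf.\ Table~\ref{tab:s_nk0_blocks}), while the right-hand side is $T(1,1)=0$. The paper's proof has the same blind spot: it silently drops the $t=b$ term $S_{\geq2}(n-b,0)$.

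For odd $b$ your leftover is fine, since $e^{(1)}_{n-b+1,0}=1=T(n-b,0)$ for $n\ge b$. Replace ``checking against the table'' with these one-line computations, and add the hypothesis $n\neq b$ when $b$ is even.
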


\medskip

For proving this proposition, we need the following combinatorial lemma.

\begin{lemma}\label{lemma T to S}
Let $b>1$. Then:
\begin{enumerate}
\item For $n-1\neq 2(b-1)$, we have:
$$T(n-1,b-1)=S_{\geq2}(n,b)+S_{\geq2}(n-1,b-1).$$

\medskip

\item $T(2b,b)=S_{\geq2}(2b,b)$.
\end{enumerate}
\end{lemma}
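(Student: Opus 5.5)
The plan is to prove both parts bijectively, reading $T(n,m)$ as the number of set partitions of $[n]$ with exactly $m$ blocks of size at least $2$, plus any number of singletons. Part (2) is immediate. A partition of $[2b]$ with $b$ blocks of size at least $2$ uses at least $2b$ elements in those blocks. Hence every block has size exactly $2$ and there are no singletons. So these partitions are exactly the ones counted by $S_{\geq2}(2b,b)$, which gives $T(2b,b)=S_{\geq2}(2b,b)$.

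For part (1), I would split the set counted by $T(n-1,b-1)$ according to whether $\pi$ has singletons.

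\emph{No singletons.} Then $\pi$ is a partition of $[n-1]$ into $b-1$ blocks, each of size at least $2$. These are counted by $S_{\geq2}(n-1,b-1)$.

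\emph{At least one singleton.} Let $\{s_1\},\dots,\{s_j\}$ ($j\geq 1$) be the singletons of $\pi$. Define $\psi(\pi)$ by deleting these singletons and adding the single new block $\{s_1,\dots,s_j,n\}$. This new block has size $j+1\geq 2$, so $\psi(\pi)$ is a partition of $[n]$ into $(b-1)+1=b$ blocks, all of size at least $2$. Hence $\psi(\pi)$ is counted by $S_{\geq2}(n,b)$.

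The inverse of $\psi$ takes a partition $\sigma$ of $[n]$ into $b$ blocks of size at least $2$ and looks at the block $B$ containing $n$. It removes $n$ and splits $B\setminus\{n\}$ into singletons. Since $|B|\geq 2$, at least one singleton is created. The remaining $b-1$ blocks keep size at least $2$, so the result is counted by $T(n-1,b-1)$ and indeed has a singleton. The two maps are clearly mutually inverse. Adding the two cases gives $T(n-1,b-1)=S_{\geq2}(n,b)+S_{\geq2}(n-1,b-1)$.

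The main point to check is that nothing degenerates in the boundary case excluded in (1), namely $n-1=2(b-1)$. In that case the first class is the perfect pairings of $[n-1]$. The second class must be empty, because $2(b-1)=n-1$ leaves no room for singletons. Correspondingly, $S_{\geq2}(n,b)=0$ since $n=2b-1<2b$. So the identity appears to hold there as well, and the hypothesis $n-1\neq 2(b-1)$ seems only to separate out the situation handled by (2), where it is applied to $T(2b,b)$ itself. I would sanity-check the bijection against Tables~\ref{table A008299} and~\ref{tab:A124324}. For example, $T(5,2)=25=15+10=S_{\geq2}(6,3)+S_{\geq2}(5,2)$, and $T(6,3)=15=S_{\geq2}(7,4)+S_{\geq2}(6,3)=0+15$.
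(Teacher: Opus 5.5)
Your proof is correct and follows the same route as the paper. Part (2) holds because $b$ blocks of size at least $2$ on $[2b]$ force a perfect matching with no singletons, and part (1) follows by separating out the singleton-free partitions and merging all singletons of the remaining ones with $n$ into a single new block (your explicit inverse, and your observation that the identity also holds in the excluded case $n=2b-1$ since $S_{\geq2}(2b-1,b)=0$, are both accurate additions).
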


\begin{proof}
(1) Let $\pi$ be a set partition, counted by $T(n-1,b-1)$.
If $\pi$ has no singletons at all, then $\pi$ is counted by $S_{\geq2}(n-1,b-1)$.
Otherwise, the singletons of $\pi$ are $\{a_1\},\dots,\{a_p\}$ for some $p\geq 1$, and therefore $\pi$ has $b-1+p$ blocks. Then, $\pi$ is mapped bijectively to the set partition $\pi'$ of $[n]$ defined by:
$$\pi' = \left(\pi -\{\{a_1\},\dots,\{a_p\}\}\right)\cup \{\{a_1,\dots,a_p,n\}\},$$
which is counted by $S_{\geq2}(n,b)$.

(2) This equality is obvious, as there are no singletons.
\end{proof}

\medskip

\begin{example}
Let us illustrate the proof of Lemma \ref{lemma T to S}(1) for $n=7$ and $b=4$, i.e. we explain why:
$$130 = T(6,2)= S_{\geq2}(7,3)+S_{\geq2}(6,2)= 105+25.$$
A partition $\pi$ counted by $T(6,2)$ can have one of the following types of blocks: $$(3,3),(4,2), (2,2,1,1)  \mbox{ and } (3,2,1).$$
The set partitions of the first two types are obviously counted by $S_{\geq2}(6,2)$.
The set partitions of the other two types are in bijection with the set partitions counted by $S_{\geq2}(7,3)$, and we provide here two examples, where we unite all the singletons with $7$:
\begin{align*}
125 \mid 3 \mid 46 \  &\mapsto \ 125 \mid 37 \mid 46,\\
12 \mid 3 \mid 46 \mid 5 \  &\mapsto \ 12 \mid 357 \mid 46.
\end{align*}
\end{example}

\begin{proof}[Proof of Proposition \ref{prop e to T}]
We start with the case $n=2b$.  Note that\break $S_{\geq 2}(2b,b)=T(2b,b)$ by Lemma \ref{lemma T to S}(2).
In the case of odd $b$, we can write by Lemma \ref{lemma T to S}:
\begin{eqnarray*}
e_{2b,0}^{(b)}& = &\sum\limits_{t=0}^{n}S_{\geq 2}(2b-t,b-t) = \sum\limits_{t=0}^{b-1}S_{\geq 2}(2b-t,b-t) = \\
& = & S_{\geq 2}(2b,b) +\left[S_{\geq 2}(2b-1,b-1)+S_{\geq 2}(2b-2,b-2)\right] + \cdots +\\
&  &  \qquad + [S_{\geq2} (b+2,2)+S_{\geq2} (b+1,1)]=\\
& = & T(2b,b)+T(2b-2,b-2) + \cdots + T(b+1,1)=\sum\limits_{i=0}^{\left\lfloor\frac{b}{2}\right\rfloor} T(n-2i,b-2i).
\end{eqnarray*}

In the case of even $b$, we can write again by Lemma \ref{lemma T to S}:
\begin{eqnarray*}
e_{2b,0}^{(b)}& = &\sum\limits_{t=0}^{n}S_{\geq 2}(2b-t,b-t) = \sum\limits_{t=0}^{b-1}S_{\geq 2}(2b-t,b-t) = \\
& = & S_{\geq 2}(2b,b) +\left[S_{\geq 2}(2b-1,b-1)+S_{\geq 2}(2b-2,b-2)\right] + \cdots +\\
&  &  \qquad + [S_{\geq2} (b+3,3)+S_{\geq2} (b+2,2)]+S_{\geq2} (b+1,1)=\\
& = & T(2b,b)+T(2b-2,b-2) + \cdots + T(b+2,2)+T(b,0)=\\
& = & \sum\limits_{i=0}^{\left\lfloor\frac{b}{2}\right\rfloor} T(n-2i,b-2i),
\end{eqnarray*}
since $S_{\geq2} (b+1,1)=T(b,0)=1$.

\medskip

For the case that $n\neq 2b$, we similarly write for even $b$ by Lemma \ref{lemma T to S}(1):

\begin{eqnarray*}
e_{n,0}^{(b)}& = &\sum\limits_{t=0}^{n}S_{\geq 2}(n-t,b-t) = \\
& = & [S_{\geq 2}(n,b) +S_{\geq 2}(n-1,b-1)]+[S_{\geq 2}(n-2,b-2)+S_{\geq 2}(n-3,b-3)] + \cdots +\\
&  &  \qquad + [S_{\geq2} (n-b+2,2)+S_{\geq2} (n-b+1,1)]=\\
& = & T(n-1,b-1)+T(n-3,b-3) + \cdots + T(n-b+1,1)=\\
&=&\sum\limits_{i=0}^{\left\lfloor\frac{b}{2}\right\rfloor} T(n-1-2i,b-1-2i).
\end{eqnarray*}

In the case of odd $b$, we have:
\begin{eqnarray*}
e_{n,0}^{(b)}& = &\sum\limits_{t=0}^{n}S_{\geq 2}(n-t,b-t)=  \\
& = &  [S_{\geq 2}(n,b) +S_{\geq 2}(n-1,b-1)]+[S_{\geq 2}(n-2,b-2)+S_{\geq 2}(n-3,b-3)] + \cdots +\\
&  &  \qquad + [S_{\geq2} (n-b+3,3)+S_{\geq2} (n-b+2,2)]+S_{\geq2} (n-b+1,1)=\\
& = & T(n-1,b-1)+T(n-3,b-3) + \cdots + T(n-b+2,2)+T(n-b,0)=\\
& = & \sum\limits_{i=0}^{\left\lfloor\frac{b}{2}\right\rfloor} T(n-1-2i,b-1-2i),
\end{eqnarray*}
since $S_{\geq2} (n-b+1,1)=T(n-b,0)=1$.
\end{proof}

\medskip

\begin{remark}
The sequence $T(n,m)$ mentioned above has another interesting combinatorial interpretation as the number of run-sorted permutations over $[n]$ having $m$ runs (see \cite{BeMa}), where $\pi\in \mathcal{S}_n$ is called {\em run-sorted}
if there exists a set partition $\tau=B_1 \mid \cdots \mid B_t$ of $[n]$ with ${\rm max}(B_i)>{\rm min}(B_{i+1})$ for all $1\leq i \leq t-1$,  such that $\pi$ is obtained by $\tau$ by the process of flattening (i.e. removing the
bars between the blocks; such set partitions are called there {\em merging-free}).
Note that the condition $\max(B_i)>\min(B_{i+1})$ in a set partition $\tau$ implies ${\rm desing}(\tau)=0$, but the converse is not true.
\end{remark}

\begin{remark}
The sequence A112493 in OEIS \cite{OEIS} locates the values of Table \ref{tab:A124324} in different positions, see Table \ref{tab:A112493} (by a comment of J\"orgen Backelin in the OEIS there).
The connection between the entries in the two tables is:
$$A124324(n+m,m)=A112493(n,m).$$

\begin{table}[H]
{\small
$\begin{NiceArray}{|w{c}{0.4cm}|c;c;c;c;c;c;c;c;c|}
\hline
\Gape[0.35cm][0.35cm]{\diagbox{\,\,n}{m\,}} &  0 &1 & 2 & 3 & 4&5&6&7&8 \\
\hline
1 &   1  &  1  &&&&&&& \\
2 &   1  &  4  &  3 &&&&&& \\
3 &   1  &  11 & 25 & 15 &&&&& \\
4 &   1  & 26  & 130 & 210 & 105 &&&& \\
5  &   1  & 57 & 546 & 1750 & 2205 & 945 &&& \\
6  & 1 & 120 & 2037 & 11368 &    26775  &  27720 &  10395  && \\
7  &   1  &  247 & 7071 & 63805 &   247555   & 460845  &   405405  &   135135 & \\
8  &  1  &   502  &  23436 &  325930  & 1939630  & 5735730  &  8828820 &  6756750 & 2027025 \\
\hline
\end{NiceArray}$}
\caption{Table form of sequence A112493 in OEIS; its entries are denoted by $R(n,m)$.}
\label{tab:A112493}
\end{table}

Following Proposition \ref{prop e to T} and the above-mentioned connection between Tables \ref{tab:A124324} and \ref{tab:A112493}, we actually have that the $b$'th column of Table \ref{tab:s_nk0_blocks} (except for the entry $(2b,b)$)
is the sum of columns $1, 3, \dots, b$ in Table \ref{tab:A112493} if $b$ is odd, and  the sum of columns $0,2, 4, \dots, b$ in Table \ref{tab:A112493} if $b$ is even.
\end{remark}

\subsubsection{A connection to second-order Eulerian numbers}

The sequence A112493 (appearing in Table \ref{tab:A112493}) is defined for $n \geq 0$ and $0 \leq k \leq n$ by: $$R(n, k) = \sum\limits_{j=0}^k {{n-j} \choose  {n-k}}E_2(n, j),$$ where $E_2(n,j)$ are the second-order Eulerian numbers; see sequences A201637 and A340556 in OEIS \cite{OEIS} (for a survey, see Deza's book \cite[Chapter 4]{DezaBook}).
Hence, it follows that
\begin{equation}\label{eqn: Tnk by E2}
T(n,k)=R(n-k,k)=\sum\limits_{j=0}^{n-k} {{n-k-j} \choose  {n-2k}}E_2(n-k, j),
\end{equation}
where $T(n,k)$ are the values taken from Table \ref{tab:A124324}.

\begin{remark}\label{remark6.16}
In Lemma \ref{lemma T to S}, we established a connection between the values $T(n,k)$ of the sequence A124324 (Table \ref{tab:A124324} above) and the associated Stirling numbers $S_{\geq 2}(n,k)$  (Table \ref{table A008299} above). By substituting Equation (\ref{eqn: Tnk by E2}) in the equations of Lemma \ref{lemma T to S}, we get the following equalities:
\begin{enumerate}
\item For $n-1\neq 2(b-1)$, we have:
\begin{equation}\label{Eqn:Question (a)}
[T(n-1,b-1)=\ ]\sum\limits_{j=0}^{n-b} {{n-b-j} \choose{n-2b+1}}E_2(n-b, j)=S_{\geq2}(n,b)+S_{\geq2}(n-1,b-1),
\end{equation}
\medskip

\item
\begin{equation}\label{Eqn:Question (b)}
[T(2b,b)=\ ]\sum\limits_{j=0}^{b} E_2(b, j)=S_{\geq2}(2b,b)\ [ =(2b-1)(2b-3)\cdots 1].
\end{equation}
\end{enumerate}

Note that Equation (\ref{Eqn:Question (b)}) has the following nice combinatorial meaning: The left side counts all the {\em Stirling permutations of order $b$}, which are permutations of the multi-set $\{1,1,2,2,\dots,b,b\}$ satisfying that for each $i$, every entry between the two copies of $i$ is larger than $i$ (introduced by Gessel-Stanley \cite{GeSt1978}). The right hand side is the number of set partitions of the set $[2b]$ into $b$ blocks of size at least $2$ and hence into $b$ blocks of size {\em exactly} $2$, i.e., perfect matchings on $[2b]$. Both sets have cardinality $(2b-1)(2b-3)\cdots 1$. Connections between Stirling permutations and perfect matchings have also been studied by Ma and Yeh \cite{MaYeh}.

The following recursive insertion algorithm is an  analogue of the classical gap-insertion construction for Stirling permutations.
A Stirling permutation of order $b$ is obtained recursively by inserting the pair $bb$ into one of the $2b-1$ gaps of a Stirling permutation of order $b-1$.
The algorithm illustrates how to establish a bijection between perfect matchings and Stirling permutations.
Given a perfect matching $M$ on $[2b]$, if $\{i,2b\}\in M$, then the pair $bb$ will be inserted in the $i$'th gap. Now we delete the block $\{i,2b\}$ from $M$ and standardize. Then we proceed recursively on the resulting perfect matching on $[2b-2]$.

We illustrate this algorithm by a small example: Consider the perfect matching\break
$
M=\{\{1,3\},\{2,6\},\{4,7\},\{5,8\}\}
$ of $[8]$:
\begin{itemize}
\item We remove the block containing \(8\), namely \(\{5,8\}\). This records
that, at the final step, the pair \(44\) should be inserted into the
$5$'th gap. Standardizing the remaining
blocks, we obtain the perfect matching
\(
\{\{1,3\},\{2,5\},\{4,6\}\}.
\)
\item Next, remove \(\{4,6\}\). This records
that \(33\) should be inserted into the $4$'th gap. Standardizing the remaining
blocks, we obtain the matching
\(
\{\{1,3\},\{2,4\}\}.
\)
\item Next, remove \(\{2,4\}\). This records
that \(22\) should be inserted into the $2$'nd gap.
\item Finally, standardizing the remaining
block, we obtain
\(
\{\{1,2\}\},
\)
which corresponds to the Stirling permutation
\(
11.
\)

\end{itemize}

We now rebuild the Stirling permutation by inserting adjacent equal pairs
into the recorded gaps. Starting with
\(
11,
\)
we insert \(22\) into the $2$'nd gap, then we insert \(33\) into the $4$'th gap, and finally, we insert \(44\) into the $5$'th gap:
\[
11 \longmapsto 1\ \fbox{$\mathbf{22}$}\ 1 \longmapsto 122\ \fbox{$\mathbf{33}$}\ 1 \longmapsto 1223\ \fbox{$\mathbf{44}$}\ 31.
\]
Hence, by the recursive algorithm, we have:
\[
\{\{1,3\},\{2,6\},\{4,7\},\{5,8\}\}
\longleftrightarrow
12234431.
\]
\end{remark}

\medskip

The following questions are now natural:
\begin{question}\label{question comb connection 2nd eulerian to stirling}
Similar to the combinatorial proof of Claim \ref{combinatorial claim for A(n-1,1)} and the combinatorial interpretation appearing the second part of Remark \ref{remark6.16}, can one find a combinatorial explanation for Equation (\ref{Eqn:Question (a)}) above?
\end{question}

\begin{question}\label{question comb connection 2nd eulerian to en0b}
Substituting the values $T(n,k)$ from Equation (\ref{eqn: Tnk by E2}) in the formulation of Proposition \ref{prop e to T},
we have for all $n$ and $b$:
$$e_{n,0}^{(b)}=\left\{
\begin{array}{ll}
\sum\limits_{i=0}^{\left\lfloor\frac{b-1}{2}\right\rfloor} \sum\limits_{j=0}^{n-b} {{n-b-j} \choose  {n-2b+2i+1}}E_2(n-b, j) & \ \ n \neq 2b \medskip \\
\sum\limits_{i=0}^{\left\lfloor\frac{b}{2}\right\rfloor} \sum\limits_{j=0}^{b} {{b-j} \choose  {2i}}E_2(b, j) & \ \  n=2b \\
\end{array}\right. .$$
Can one find a combinatorial explanation for this equality?
\end{question}

\section{The distribution of the Desing parameter}\label{section: distribution Desing parameter}

In this section, we give another aspect of the Desing parameter, namely, we count the number of set partitions having a given Desing set, see Definition \ref{first definitions for Desing}.

\medskip

Given a set partition $\pi$ of $[n]$, we encode its pattern of singletons by a binary string: $x=x_1x_2\cdots x_n\in\{0,1\}^n$, where
$x_i=1$ if $i \in {\rm Sing}(\pi)$ and $x_i=0$ otherwise.
Then a given Desing set
$D=\{d_1<d_2<\cdots<d_s\}\subseteq\{1,\dots,n-1\}$
forces $(x_{d_j},x_{d_j+1})=(1,0)$ for each $d_j\in D$, and forbids any additional occurrence of the pattern $10$.
Note that between consecutive forced $10$'s, the binary string $x$ has to be of the form $0^\ast1^\ast$ (zeros followed by ones); i.e., extra $1$'s can only appear as suffixes of the gaps.

Let $t=\sum\limits_{i=1}^n x_i$ be the total number of singletons in $\pi$. For a given set $D$ as above, let $F(D,t)$ be the number of binary strings $x\in\{0,1\}^n$ with exactly $t$ $1$'s, whose only occurrences of $10$ appear at indices in $D$.
Write the \emph{gap-lengths}
$$
L_0=d_1-1,\qquad L_j=d_{j+1}-d_j-2\ (1\le j\le s-1),\qquad L_s=n-d_s-1.
$$
Inside the $j$'th gap, we may append a suffix of exactly $b_j$ $1$'s, where $0\le b_j\le L_j$. As the $1$'s  must be suffixes, there is a unique placement for each $b_j$. Hence, $F(D,t)$ is the number of bounded weak compositions
$$b_0+b_1+\cdots+b_s \;=\; t-s,\qquad 0\le b_j\le L_j.$$
So we have:
\begin{lemma}
\label{lem:FGEN}
Let $D=\{d_1<\cdots<d_s\}$ with gap-lengths $L_0,\dots,L_s$ as above. Then:
$$\sum_{t} F(D,t)\,x^{\,t-s} \;=\; \prod_{j=0}^{s} \bigl(1+x+x^2+\cdots+x^{L_j}\bigr).$$
\end{lemma}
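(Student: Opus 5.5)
The lemma follows once we make precise the structural description sketched just before it: binary strings with prescribed $10$-positions correspond to bounded weak compositions, and then we read off the generating function. I would write everything in terms of the singleton indicator string $x=x_1\cdots x_n$. Fix $D=\{d_1<\cdots<d_s\}$; I take $D$ to be sparse, since ${\rm Desing}$ sets are, so $d_{j+1}\ge d_j+2$ and all gap-lengths are nonnegative. A string $x$ is counted by $F(D,t)$ exactly when it satisfies three conditions. First, $x_{d_j}=1$ and $x_{d_j+1}=0$ for all $j$. Second, no other index $i$ has $x_ix_{i+1}=10$. Third, $x$ has exactly $t$ ones. The positions not fixed by the first condition split into the $s+1$ maximal intervals
$$I_0=[1,d_1-1],\quad I_j=[d_j+2,\,d_{j+1}-1]\ (1\le j\le s-1),\quad I_s=[d_s+2,\,n],$$
of lengths $L_0,\dots,L_s$.

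The key step is to show that the second condition holds if and only if the restriction of $x$ to each $I_j$ has the form $0^{L_j-b_j}1^{b_j}$. For this I would check the boundary pairs at the ends of each interval. Suppose $I_j$ is nonempty ($L_j\ge 1$). On its left it is preceded by the forced $0$ at position $d_j+1$, or by nothing when $j=0$, so the pair crossing its left end cannot be $10$. On its right it is followed by the forced $1$ at position $d_{j+1}$, or by nothing when $j=s$, so the pair crossing its right end cannot be $10$ either. Every other pair either lies inside some $I_j$ or is one of the forced pairs $(d_j,d_j+1)$. Hence the only possible forbidden $10$'s are internal to the intervals, and a $0/1$ word of length $L_j$ avoids the factor $10$ exactly when it is of the form $0^\ast1^\ast$. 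When $L_j=0$, for instance when $d_{j+1}=d_j+2$, the forced pair $(0,1)$ is harmless, and the single option $b_j=0$ is consistent with this description.

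The resulting map $x\mapsto(b_0,\dots,b_s)$ is a bijection from the strings counted by $F(D,t)$ onto the tuples with $0\le b_j\le L_j$ and $\sum_j b_j=t-s$, because the forced positions contribute exactly $s$ ones. Multiplying over $j$ and summing $x^{\sum b_j}$ gives $\prod_{j=0}^{s}(1+x+\cdots+x^{L_j})$, which is the claimed identity.

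The only delicate point is the boundary verification: one must note that sparsity makes the forced positions disjoint, and handle the extreme cases $d_1=1$ and $d_s=n-1$ (empty $I_0$ or $I_s$) correctly. Everything else is routine bookkeeping.
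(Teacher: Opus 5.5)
Your proof is correct and follows the paper's route. The paper argues, in the discussion just before the lemma, that the forced $10$'s at $D$ cut the singleton string into gaps of lengths $L_0,\dots,L_s$, each of which must read $0^{L_j-b_j}1^{b_j}$, so that $F(D,t)$ counts bounded weak compositions $b_0+\cdots+b_s=t-s$ with $0\le b_j\le L_j$. Your check of the boundary pairs, including the $L_j=0$ case, makes rigorous the step the paper states in one line; note only that for $s=0$ both versions need the convention that the single gap is all of $[n]$, i.e.\ $L_0=n$.
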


Note that $F(D,t)>0$ if and only if $s\le t\le n-s$.

After fixing the $t$ singletons, there remain $m=n-t$ elements to be partitioned into blocks of size at least $2$.

For each set $D\subseteq [n-1]$, we are now able to count the number of set partitions $\pi$ of $[n]$ having $b$ blocks and having ${\rm Desing}(\pi)=D$:
\begin{prop}
\label{thm:master}
Let $n$ and $b$ be positive numbers and let $D\subseteq\{1,\dots,n-1\}$ have no consecutive integers. Then the number of set partitions $\pi$ of $[n]$ into $b$ blocks with ${\rm Desing}(\pi)=D$ is
\begin{equation}
\label{eq:master}
  N_{n,b}(D)
  \;=\; \sum_{t} F(D,t)\cdot \SGE(n-t,\,b-t),
\end{equation}
where $F(D,t)$ is given by Lemma~\ref{lem:FGEN}. 
The sum runs over all $t$ permitted by Equation (\ref{limit on num of singletons}).
\end{prop}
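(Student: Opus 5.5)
The plan is to decompose the set of partitions counted by $N_{n,b}(D)$ according to their singleton set. Every $\pi\in\mathcal{PS}et(n,b)$ determines a unique set ${\rm Sing}(\pi)\subseteq[n]$. Moreover, ${\rm Desing}(\pi)$ depends only on ${\rm Sing}(\pi)$: by Definition \ref{first definitions for Desing}, $i\in{\rm Desing}(\pi)$ exactly when $i\in{\rm Sing}(\pi)$ and $i+1\notin{\rm Sing}(\pi)$. It follows that
$$N_{n,b}(D)=\sum_{S}\bigl|\{\pi\in\mathcal{PS}et(n,b)\mid {\rm Sing}(\pi)=S\}\bigr|,$$
where $S$ runs over those subsets of $[n]$ whose ``descent-of-singletons'' set equals $D$.

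Next we count the inner summand for a fixed $S$ with $|S|=t$. A partition with ${\rm Sing}(\pi)=S$ consists of the $t$ singletons $\{s\}$, $s\in S$, together with a partition of the complement $[n]\setminus S$ into blocks of size at least $2$. Since the total number of blocks is $b$, the complement must be split into exactly $b-t$ blocks. Conversely, every such partition of the complement, combined with these singletons, gives a partition with singleton set exactly $S$. This holds because the blocks of the complement have size at least $2$, so no new singletons are created. Hence the inner count is $\SGE(n-t,b-t)$, which depends only on $t$; this is the same reasoning used for $|L_{n-k}|$ in Section \ref{section: preimage desing}.

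We then group the sets $S$ by their size $t$. Encode $S$ by the binary string $x$ with $x_i=1$ iff $i\in S$. The condition ``Desing set equals $D$'' becomes: the occurrences of $10$ in $x$ are exactly at the positions $d_j\in D$. So the number of admissible $S$ of size $t$ is by definition $F(D,t)$, which Lemma \ref{lem:FGEN} evaluates. Summing over $t$ yields Equation (\ref{eq:master}).

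The only point needing care, and the main obstacle, is the range of $t$. Terms with $F(D,t)=0$ vanish: by the remark after Lemma \ref{lem:FGEN}, these are the terms with $t<s$ or $t>n-s$. Terms with $\SGE(n-t,b-t)=0$ also vanish, namely those with $b-t<0$, those with $2(b-t)>n-t$, and those with $t=b<n$. This leaves exactly the range of Equation (\ref{limit on num of singletons}) with ${\rm des}$ replaced by $s=|D|$, plus the degenerate case $D=\emptyset$, $t=n=b$ (the all-singletons partition). That degenerate case is handled by the convention $\SGE(0,0)=1$. The hypothesis that $D$ has no consecutive integers is needed only to ensure that $F(D,t)$ is given by Lemma \ref{lem:FGEN}: it makes the gap-lengths $L_j\ge 0$, so that the forced $10$ patterns do not overlap.
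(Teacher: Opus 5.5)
Your proof is correct and follows the paper's own argument: fix the singleton set, encoded as a binary string whose occurrences of $10$ are exactly at $D$ (there are $F(D,t)$ such sets of size $t$), then partition the remaining $n-t$ elements into $b-t$ blocks of size at least $2$ (there are $S_{\geq 2}(n-t,b-t)$ ways). Your extra check of the range of $t$ is worthwhile: for $D=\emptyset$ and $b=n$ the range $\max\{s,2b-n\}\le t\le b-1$ is empty and misses the all-singletons partition, so the sum should be taken over all $t$ with the convention $S_{\geq 2}(0,0)=1$, as you do.
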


\begin{example}
Let $n=12$ and $D=\{1,4,8\}$, then
the gap-lengths are: $L_0=0,\break L_1=1,\ L_2=2$ and $L_3=3$. So:
\begin{eqnarray*}
\sum_{t} F(\{1,4,8\},t)\,x^{\,t-3} &=& 1 \cdot (1+x) \cdot (1+x+x^2) \cdot \bigl(1+x+x^2+x^3\bigr)=\\
&=&
1+3x+5x^2+6x^3+5x^4+3x^5+x^6,
\end{eqnarray*}
and hence:
$$F(\{1,4,8\},t)=\left\{ \begin{array}{cl}
    1 & \ t=3,9 \\
    3 & \ t=4,8\\
    5 & \ t=5,7\\
    6 & \ t=6
\end{array}\right.
$$
If the number of blocks is $b=8$, then only $3\leq t \leq 7$ contribute, and we get:
\begin{eqnarray*}
    N_{12,8}(\{1,4,8\})&=&
    F(\{1,4,8\},3)\cdot S_{\geq2} (9,5)+F(\{1,4,8\},4)\cdot S_{\geq2} (8,4)+\\
    && \ +\ F(\{1,4,8\},5)\cdot S_{\geq2} (7,3)+F(\{1,4,8\},6)\cdot S_{\geq2} (6,2)+\\&&
    \ + \ F(\{1,4,8\},7)\cdot S_{\geq2} (5,1)=\\
&=&1 \cdot 0 + 3 \cdot 105 + 5 \cdot 105+6 \cdot 25 + 5 \cdot 1
  = 995,
\end{eqnarray*}
where the values of  $S_{\geq2} (n,k)$ can be found in Table \ref{table A008299} above.
\end{example}

\section{Open questions}\label{section: open questions}
In various parts of this paper, we have raised several questions and we summarize them here, with an appropriate reference to the corresponding context:
\begin{enumerate}
\item {\bf Question \ref{question generalize Eu}:}
Eu \cite{Eu} provides a simple recursive bijection between Motzkin paths of length $n$ and ordinary standard Young tableaux having
$n$ elements with at most three rows. Can Eu's bijection be adapted for proving the equality between the number of non-crossing partitions $\pi$ with ${\rm desing}(\pi)=0$ and the number of shifted Young tableaux having $n$ elements with at most $3$ rows?

\medskip

\item {\bf Question \ref{question equality columns enk}:} We have shown in Claim  \ref{claim_columns_enk} that for $k\leq n-b$ or $k> 2(n-b)$, we have $e_{n-k,k}^{(b)} = e_{n-k,k+1}^{(b+1)}$ using algebraic manipulations, where $e_{n-k,k}^{(b)}$ are the coefficients in the Schur expansion for set partitions $\pi$ of $[n-k]$ having $b$ blocks and ${\rm desing}(\pi)=k$ . Can one find a bijection which justifies this equality?

\medskip

\item {\bf Questions \ref{question comb connection 2nd eulerian to stirling} and \ref{question comb connection 2nd eulerian to en0b}:} Find a combinatorial justification for the following equalities, which were obtained by algebraic manipulations:
\begin{enumerate}
    \item $\sum\limits_{j=0}^{n-b} {{n-b-j} \choose{n-2b+1}}E_2(n-b, j)=S_{\geq2}(n,b)+S_{\geq2}(n-1,b-1)$,
    \item $e_{n,0}^{(b)}=\left\{
\begin{array}{ll}
\sum\limits_{i=0}^{\left\lfloor\frac{b-1}{2}\right\rfloor} \sum\limits_{j=0}^{n-b} {{n-b-j} \choose  {n-2b+2i+1}}E_2(n-b, j) & \ \ n \neq 2b \medskip \\
\sum\limits_{i=0}^{\left\lfloor\frac{b}{2}\right\rfloor} \sum\limits_{j=0}^{b} {{b-j} \choose  {2i}}E_2(b, j) & \ \  n=2b \\
\end{array}\right.$,
\end{enumerate}
where $E_2(b, j)$ are the second-order Eulerian numbers and $S_{\geq2}(n,b)$ are the associated Stirling numbers of the second kind.

\end{enumerate}

An additional question that can be raised at this concluding point, is the following:
\begin{question}
Can the Schur-positivity results of this paper be lifted to a
dual-equivalence theory for set partitions?

More precisely, let \(\mathcal{A}\) be one of the families of set partitions considered
above, equipped with either the statistic \(\operatorname{Short}\) or
\(\operatorname{Desing}\).  Is it possible to define local involutions
\[
\varphi_i:\mathcal{A}\to \mathcal{A},\qquad 2\le i\le n-1,
\]
depending only on the local configuration of the elements
\(i-1,i,i+1\), such that the resulting colored graph is a dual-equivalence
graph, in the sense of Assaf \cite{Assaf}?  Equivalently, can the connected components be made
descent-isomorphic to the standard dual-equivalence graphs on
\(SYT(\lambda)\), where the shapes \(\lambda\) are the hook shapes
\((n-k,1^k)\) in the case of \(\operatorname{Short}\), and the two-row
shapes \((n-k,k)\) in the case of \(\operatorname{Desing}\)?
\end{question}

\medskip

\section*{Acknowledgments}
We would like to express our gratitude to Yuval Roichman for fruitful discussions.

\end{document}